\DeclareFontFamily{OT1}{pzc}{}
\DeclareFontShape{OT1}{pzc}{m}{it}{<-> s * [1.10] pzcmi7t}{}
\DeclareMathAlphabet{\mathpzc}{OT1}{pzc}{m}{it}
\algnewcommand\And{\textbf{and}}
\DeclarePairedDelimiter{\ceil}{\lceil}{\rceil}
\newtheorem{lemma}{Lemma}
\newtheorem{remark}{Remark}
\newtheorem{assumption}{Assumption}
\newtheorem{problem}{Problem}
\newtheorem{theorem}{Theorem}
\newcommand{\cov}{\mathcal{C}}
\newcommand{\MC}[1]{\mathcal{#1}}
\newcommand{\Vx}{\Vec{x}}
\newcommand{\Vy}{\Vec{y}}
\newcommand{\psih}{\widehat{\psi}}
\newcommand{\Psih}{\widehat{\Psi}}
\newcommand{\Thetah}{\widehat{\Theta}}
\newcommand{\WH}[1]{\widehat{#1}}
\newcommand{\KL}{Karhunen–Lo\`eve~}
\newcommand{\Lhood}{\mathcal{L}}
\newcommand{\yobs}{\Vec{y}_{obs}}
\newcommand{\eff}{\mathrm{eff}}
\newcommand{\etal}{\textit{et al.}}
\title{Hierarchical Gaussian Random Fields for Multilevel Markov Chain Monte Carlo: Coupling Stochastic Partial Differential Equation and The Karhunen-Loève Decomposition}
\author{ 
\href{https://orcid.org/0000-0001-6882-9737}{\includegraphics[scale=0.06]{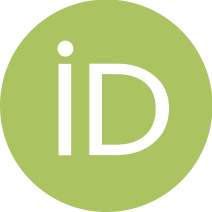}\hspace{1mm}
    Sohail Reddy \thanks{Corresponding author}} \\
	Lawrence Livermore National Laboratory \\
	Livermore, CA 94550 \\
	\texttt{reddy6@llnl.gov} \\
}
\date{}
\begin{document}

\maketitle

\begin{abstract}

    This work introduces structure preserving hierarchical decompositions for sampling Gaussian random fields (GRF) within the context of multilevel Bayesian inference in high-dimensional space. Existing scalable hierarchical sampling methods, such as those based on stochastic partial differential equation (SPDE), often reduce the dimensionality of the sample space at the cost of accuracy of inference. Other approaches, such that those based on Karhunen-Loève (KL) expansions, offer sample space dimensionality reduction but sacrifice GRF representation accuracy and ergodicity of the Markov Chain Monte Carlo (MCMC) sampler, and are computationally expensive for high-dimensional problems. The proposed method integrates the dimensionality reduction capabilities of KL expansions with the scalability of stochastic partial differential equation (SPDE)-based sampling, thereby providing a robust, unified framework for high-dimensional uncertainty quantification (UQ) that is scalable, accurate, preserves ergodicity, and offers dimensionality reduction of the sample space. The hierarchy in our multilevel algorithm is derived from the geometric multigrid hierarchy. By constructing a hierarchical decomposition that maintains the covariance structure across the levels in the hierarchy, the approach enables efficient coarse-to-fine sampling while ensuring that all samples are drawn from the desired distribution.    The effectiveness of the proposed method is demonstrated on a benchmark subsurface flow problem, demonstrating its effectiveness in improving computational efficiency and statistical accuracy. Our proposed technique is more efficient, accurate, and displays better convergence properties than existing methods for high-dimensional Bayesian inference problems.

\end{abstract}

\section{Introduction} \label{sec:intro}

Gaussian random fields (GRFs) play a pivotal role in modeling and understanding spatially correlated phenomena across various domains, including geostatistics \cite{Song2008}, physics \cite{Pen1997}, and machine learning \cite{Wang2023}. They provide a robust framework for encoding structural information about spatial fields, including smoothness and periodicity, through covariance functions or kernels. Markov chain Monte Carlo (MCMC) \cite{Robert2021} methods frequently intersect with GRFs in Bayesian inference and uncertainty quantification (UQ)where GRFs are often employed as priors and proposal distributions for spatial fields in inverse problems \cite{Cavieres2024}. The integration of GRFs and MCMC allows for the systematic incorporation of observational data and prior knowledge into statistical models, but it also introduces computational bottlenecks. These bottlenecks are particularly pronounced in high-dimensional problems, where the cost of generating GRF samples and evaluating forward models is substantial.

Efficient sampling of GRFs is a cornerstone of their utility, especially in high-dimensional settings where computational demands are significant. Traditional sampling methods, such as the Cholesky decomposition \cite{Yang2022} and Karhunen-Loève (KL) expansions \cite{Zheng2017}, are well-suited for lower dimensional problems. For instance, the KL expansion requires the eigendecomposition of a dense covariance matrix, which can be tractable for lower dimensional sampling but often infeasible for high-dimensional, large-scale problems, thereby necessitating the need for approximations - such as low-rank, randomized matrix \cite{Sabelfeld2011}, hierarchical matrix \cite{Khoromskij2009}, and Rayleigh-Ritz \cite{Ruiz1999} methods. To alleviate these challenges, scalable alternatives such as circulant embedding \cite{Graham2018} and stochastic partial differential equation (SPDE)-based sampling \cite{Lindgren11,Duswald2024} have often been preferred for large scale problems. The SPDE approach, in particular, exploits significant advances in scalable, numerical techniques for solving PDEs, such as higher order finite elements, preconditioning strategies, scalable linear algebra, with the GRF sample being realized as the solution to the SPDE. While surrogate models, in the form of Gaussian processes \cite{Paun2019,Smith2024}, polynomial chaos expansions \cite{Luthen2021,Liu2020}, and deep neural networks \cite{Yan2020,Hortua2020}, have greatly reduced the computation cost of the forward problem \cite{Robert2017}, they introduce approximation errors that bias the sampling. This, combined with the computational expense of Bayesian inference in high-dimensional sample space, makes inverse problems particularly challenging when using MCMC techniques.

Recent advancements in hierarchical GRF sampling have emerged as promising solutions to address these challenges, offering a balance between computational efficiency and accuracy. These hierarchical methods decompose the sampling process across multiple levels of resolution, enabling a coarse-to-fine representation of the field where the sampling on the less expensive, coarser resolutions is used to inform the sampling on the more expensive and accurate, finer resolution, improving the efficiency of the MCMC sampler and accelerating the convergence of the statistical moment estimation. The hierarchical MCMC sampling techniques have been extended and combined with variance-decay techniques, such as multilevel Monte Carlo (MLMC) sampling \cite{Dodwell2015,Dodwell2019}, to accelerate estimation of statistical moments using multilevel samples. Naturally, such methods require a hierarchical decomposition of the GRF that is consistent across all levels, to ensure that the samples on each level are drawn from the correct distribution (i.e. the decomposition preserves the structure of the covariance). Such a hierarchical decomposition based on multigrid theory has recently been developed for SPDE-based sampling, and has shown promise in improving the acceptance rate across the levels and the multilevel estimate of the statistical moments \cite{Fairbanks2021}. 

Hierarchical methods based solely on multigrid decomposition for SPDE sampling face challenges in balancing sample space reduction with discretization accuracy. Coarsening the resolution (i.e. mesh) reduces computational costs but may introduce errors that degrade the quality of GRF realizations and forward model evaluations. Hierarchical sampling via KL expansions offers a potential solution and can achieve high discretization accuracy by utilizing a finer (nodal) resolution (i.e. mesh) for the forward problem, while reducing dimensionality of the sample space by coarsening in modal space. However, hierarchical sampling based on the KL expansion has been limited to covariance kernels - such as exponential covariance \cite{LeMaitre2010,Teckentrup13,Lykkegaard2023,lykkegaard2021accelerating,Cliffe2011} - for which the analytical form of the KL expansion is known. This not only circumvents the computationally expensive eigendecomposition of the covariance matrix but also ensures the consistent ordering of the KL modes across all levels. Then, the sample space (i.e. number of terms in the expansion) can be held constant across all levels \cite{Cliffe2011} or varied on each level \cite{Teckentrup13,Lykkegaard2023,lykkegaard2021accelerating} according to appropriate error estimations for the truncation \cite{Charrier2012}. While discrete approximations of the KL modes can be computed for any arbitrary discrete covariance matrix, coupling KL sampling across different levels in a hierarchy requires additional work since the coefficients of the modes on one level do not necessarily correspond to those of the modes on another level. Furthermore, ensuring ergodicity of the sampling on each level requires the set of KL modes to be complete, which is intractable to compute and store in memory for fine resolutions. Hence, a hierarchical approach that performs dimensionality reduction of the sample space (i.e. use a truncated KL expansion) without sacrificing the discretization accuracy or ergodicity of the sampler is highly sought after.

We address this challenge and develop a hierarchical modal-multigrid decomposition that couples the discretization accuracy and dimensionality reduction of the KL sampling with the scalability and ergodicity-preservation of the SPDE sampler. We derive decompositions that preserve the structure of the covariance matrix and ensure the samples on each level are drawn from the correct distribution. The decomposition enables hierarchical sampling in the space complement to the space spanned by the truncated KL modes, thereby ensuring erogodicity and convergence to the correct posterior distribution. We employ our hierarchical decompositions within the context of multilevel Bayesian inference, namely, multilevel MCMC (MLMCMC), to estimate the posterior mean of a scalar quantity of interest (QoI) in a high-dimensional setting. The resulting MLMCMC sampler is demonstrated on a problem in subsurface flow, namely Darcy flow with uncertain permeability coefficient, whereby Darcy equations are solved within the mixed finite element framework. 

This paper is structured as follows: Section 2 provides an overview of GRF sampling using the KL expansion and SPDE approach, and establishes their equivalence. Section 3 introduces the hierarchical modal-multigrid decomposition, the coupling of the KL and SPDE samplers, and proves the multilevel decomposition samples from the desired covariance. Section 4 introduces the multilevel sampling via MLMCMC algorithm and the multilevel estimation of stochastic moment. Section 5 introduces the test problem, a benchmark groundwater flow problem, and its finite element discretization, and Section 6 demonstrates the performance of the hierarchical samplers. We conclude by summarizing the findings and outlining potential directions for future research in this dynamic field.

\section{Sampling Gaussian Random Fields} \label{sec:grfs}

The standard approach for generating realizations of Gaussian random fields with covariance function $\cov$ (i.e. kernel) can be viewed as a transformation of a standard multivariate normal distribution
\aligneq{StdSampling}{
    \MC{N}(0,\cov) \sim \theta &= \cov^{1/2} \xi~, \quad~\mathrm{with~} \xi \sim \MC{N}(0,I)
}
and requires computing the square-root of the covariance function. Defining $\cov^{1/2}$ within different frameworks (e.g., by Cholesky factors, Fourier expansion, etc.) yields different sampling algorithms with different algorithmic scaling and efficiency. However, in practice, this square-root cannot be computed analytically for most covariance kernels and numerical approaches are sought after. Here, we focus on two popular and optimal approaches for sampling: the \KL expansion and the stochastic partial differential equation.

\subsection{Sampling Technique: \KL Expansion} \label{subsec:samptech:KLE}

The \KL (KL) expansion represents a stochastic process, such as a Gaussian random field, as an infinite series of orthogonal (eigenfunctions) functions weighted by random coefficients. It is known by Mercer's theorem, that any symmetric, positive definite (SPD) kernel $\cov$ admits a spectral decomposition
\eq{cov:kernel-decomp}{
    \cov (\Vx,\Vy) = \sum_{k=1}^\infty \lambda_k \psi_k(\Vx) \psi_k(\Vy),
}
with eigenvalues, $\lambda_k$, and eigenfunctions, $\psi_k(\Vx)$, where 
\[
    \integral{\Omega}{}{\psi_i(\Vx) \psi_j(\Vx)~d\Omega} = \delta_{ij} .
\]    
Then, a stochastic process with mean $\mu$ and covariance $\cov$ can be represented in the eigenfunctions of $\cov$
\[
    \MC{N}(\mu,\cov) \sim \theta(\Vx)  =  \mu(\Vx)+ \sum_{i=1}^\infty \zeta_i \psi_i(\Vx),
\]
where the random coefficients $\zeta_i$ can be found by projecting $\theta(\Vx)-\mu(\Vx)$ onto the eigenfunctions
\[
    \zeta_i = \integral{\Omega}{}{ \rbrac{\theta(\Vx)-\mu(\Vx)} \psi_i(\Vx) ~d\Omega}.
\]
Here onwards, we assume $\mu(\Vx) = 0$ without loss of generality. Then, we obtain the following moments for $\zeta_i$
\eq{KL:mean}{
    \E{\zeta_i} = \E{\integral{\Omega}{}{ \theta(\Vx) \psi_i(\Vx) ~d\Omega}} = \integral{\Omega}{}{ \E{\theta(\Vx)} \psi_i(\Vx) ~d\Omega} = 0,
}
and
\aligneq{KL:cov}{
    \E{\zeta_i \zeta_j} &= \E{\integral{\Omega}{}{ \integral{\Omega}{}{ \theta(\Vx)\theta(\Vy) \psi_i(\Vx)\psi_j(\Vy) ~d\Vx}~d\Vy}} \\
                        &= \integral{\Omega}{}{ \integral{\Omega}{}{ \E{\theta(\Vx)\theta(\Vy)} \psi_i(\Vx)\psi_j(\Vy) ~d\Vx}~d\Vy} \\
                        &= \integral{\Omega}{}{ \psi_j(\Vy) \rbrac{\integral{\Omega}{}{ \cov(\Vx,\Vy) \psi_i(\Vx) ~d\Vx}}~d\Vy} \\
                        &= \lambda_i \delta_{ij}, \\
}
where, from \eqref{cov:kernel-decomp}, we use the relation 
\aligneq{KL:kernel-integral}{
    \integral{\Omega}{}{ \cov(\Vx,\Vy) \psi_i(\Vx) ~d\Vx} &= \sum_{k=1}^\infty \lambda_k \psi_k(\Vy) \integral{\Omega}{}{  \psi_k(\Vx) \psi_i(\Vx) ~d\Vx}\\
                                                            &= \sum_{k=1}^\infty \lambda_k \psi_k(\Vy) \delta_{ik} = \lambda_i \psi_i(\Vy).
}
Then, with $\zeta_i \sim \MC{N}(0,\lambda_i)$ we get
\aligneq{KL:continuous}{
    \MC{N}(0,\cov) \sim \theta(\Vx)  = \sum_{i=1}^\infty \underbrace{\xi_i \sqrt{\lambda_i}}_{\zeta_i }  \psi_i(\Vx), \quad \mathrm{with}~\xi_i \sim \MC{N}(0,1),
}
where, given $\lambda_i$ and $\psi_i(\Vx)$, sampling $\MC{N}(0,\cov)$ now requires sampling a standard multivariate normal distribution $\MC{N}(0,I)$. Since analytical representations of eigenfunctions are only known for select few Mercer kernels, the eigenfunctions for arbitrary covariance kernels are often approximated discretely and computed numerically. Similarly, an SPD covariance matrix $\cov_h \in \R^{N\times N}$, induced by the covariance kernel, $\cov$, admits a spectral decomposition
\eq{cov:spec-decomp}{
    \cov_h = \sum_{i=1}^N  \lambda_i \psih_i~\psih_i^T \quad \rightarrow \quad \cov_h^{1/2} = \sum_{i=1}^N  \sqrt{\lambda_i} \psih_i~\psih_i^T,
}
with eigenvectors $\psih_i$. We can then compute \eqref{StdSampling} as
\aligneq{KL:discrete}{
    \MC{N}(0,\cov_h) \sim \theta_h &= \sum_{i=1}^N  \underbrace{\sqrt{\lambda_i}(\psih_i,\xi)}_{\zeta_i}  \psih_i \quad ~&\mathrm{with~} \zeta_i \sim \MC{N}(0,\lambda_i \MC{M}_i)\\
                          &= \sum_{i=1}^N  \xi_i \mathpzc{m}_i \sqrt{\lambda_i} \psih_i \quad ~&\mathrm{with~} \xi_i \sim \MC{N}(0,1) &\mathrm{,~and~~} \mathpzc{m}_i = (\psih_i,\psih_i)_{\MC{M}^{1/2}}, \\
}
where, the inner product $(\psih_i,\xi) \sim \MC{N}(0,\MC{M}_i)$ and $\MC{M} = (\cdot,\cdot)$ is the bilinear form on the space of $\psih$ and $\xi$.

\subsection{Sampling Technique: Stochastic PDE Approach} \label{subsec:samptech:SPDE}

The stochastic partial differential equation (SPDE) approach defines $\cov^{1/2}$ through a differential operator. Consider a trace class operator, $\cov = \MC{A}^{-2}$ with
\eq{cov:operator}{
    \MC{A}\theta(\Vx) := \dfrac{1}{g} \rbrac{ \Lap{} - \kappa^2 }^{\alpha/2} \theta(\Vx) ,~\mathrm{with}~ \alpha = \nu + \dfrac{d}{2},~\kappa > 0,~ \nu > 0
}
where $\kappa^{-1} > 0$ is the correlation length, 
\eq{spde:g}{
    g = (4\pi)^{d/4} \kappa^\nu \sqrt{\dfrac{\Gamma(\nu + d/2)}{\Gamma(\nu)}}
}
imposes a unit marginal variance on $\theta(\Vx)$ and $d$ is the dimension. In particular, $\MC{A}^{-2}$ with \eqref{cov:operator} represents a class of Mat\'ern covariance kernels 
\eq{matern}{
    \cov_M(\Vx,\Vy) \propto \dfrac{\sigma^2}{2^{\nu - 1}\Gamma(\nu)} \rbrac{\kappa \Norm{\Vx - \Vy}}^\nu K_\nu\rbrac{\kappa \Norm{\Vx - \Vy}},
}
where, $\sigma^2$ is the marginal variance, $\nu > 0$ determines the mean-square differentiability of the stochastic process, $\Gamma(\nu)$ the gamma function and $K_\nu$ is the modified Bessel function of the second kind. In the special case of $\nu = \dfrac{1}{2}$, \eqref{matern} reduces to the exponential covariance
\eq{expcov}{
    \cov_E(\Vx,\Vy) \propto \sigma^2 \exp(\kappa \Norm{\Vx - \Vy}),
}
Then, for $d=2,~\nu=1$ and $d=3,~\nu=\dfrac{1}{2}$, the fractional-order differential operator in \eqref{cov:operator} reduces to an integer-order operator. Following the definition of $\MC{A}$, the random fields can be sampled as
\aligneq{cov:continuous}{
    \MC{N}(0,\cov) \sim \theta(\Vx) &= \cov^{1/2} \MC{W}(\Vx)%
                          &= \MC{A}^{-1} \MC{W}(\Vx) \rightarrow  \MC{A}\theta(\Vx) = \MC{W}(\Vx).
}
Here, $\cov^{1/2} = \MC{A}^{-1}$, hence, \eqref{StdSampling}, \eqref{KL:continuous}, and \eqref{cov:continuous} are all equivalent if $\psi_i$ are the eigenfunctions of $\cov$ and $\MC{A}$. Like in the case of analytical \KL expansion, analytic solutions to \eqref{cov:continuous} can only be obtained in special cases on simple domains, and therefore, require solution by numerical methods. In this work, we employ the finite element method to solve \eqref{cov:continuous} with $\MC{A}$ defined by \eqref{cov:operator}.

Consider trial and test spaces $\varphi_h,\phi_h \in H^1(\Omega_h) \subset H^1(\Omega)$, respectively, the finite element representation of the GRF $\theta_h \in H^1(\Omega_h)$ expanded in basis $\varphi_h$.

\begin{problem}
    Find $\theta_h \in H^1(\Omega_h)$ such that
    \aligneq{SPDE:H1}{
            \rbrac{\Grad{\phi_h},\Grad{\theta_h}} - \kappa^2 \rbrac{\phi_h,\theta_h} &= -g \rbrac{\phi_h,\MC{W}_h}, \quad &\forall \phi_h \in H^1(\Omega_h)
    }
    with boundary conditions $\Grad{\theta_h} \cdot \Vec{n} = 0$,
and inner products defined as 
\expression{
    \rbrac{\phi_h,\theta_h} = \integral{\Omega_h}{}{\phi_h~\theta_h~d\Omega},~\mathrm{and} ~\rbrac{\Grad{\phi_h},\Grad{\theta_h}} = \integral{\Omega_h}{}{\Grad{\phi_h}\cdot\Grad{\theta_h}~d\Omega}.
}
\end{problem}
To simplify the presentation, we write \eqref{SPDE:H1} as 
\aligneq{cov:discrete}{
            (\MC{S}_h - \kappa^2 \MC{M}_h)\theta_h &= -g \MC{M}_h \MC{W}_h \\
            \MC{A}_h \theta_h &= -g \zeta_h, %
}
with mass matrix $\MC{M}_h = \rbrac{\varphi_h,\phi_h}$, and stiffness matrix $ \MC{S}_h = \rbrac{\Grad{\phi_h},\Grad{\varphi_h}}$. Here, \eqref{cov:discrete} is the discrete form of \eqref{cov:continuous}. From \eqref{KL:mean}, \eqref{KL:cov} and \eqref{KL:kernel-integral}, and by representing the eigenfunctions $\psi(\Vx)$ in \eqref{KL:cov} in the finite element basis $\varphi_h$, we get
\[
   \E{\zeta_h \zeta_h} = \integral{\Omega_h}{}{ \varphi_h ~\varphi_h  ~d\Omega} = (\varphi_h,\varphi_h)= \MC{M}_h . %
\]
Hence, $\zeta_h \sim \MC{N}(0,\MC{M}_h)$ and 
\[
    \MC{N}(0,\MC{M}_h) \sim \zeta_h \sim \MC{M}_h\MC{W}_h \iff \MC{W}_h \sim \MC{N}(0,\MC{M}_h^{-1}).
\]
Defining $\zeta_h = \MC{M}_h^{1/2} \xi_h$ with $\xi_h \sim \MC{N}(0,I)$, we can generate realizations of GRFs by solving
\aligneq{cov:discrete:std}{
    \MC{A}_h \theta_h = -g\MC{M}_h^{1/2} \xi_h  \rightarrow  \theta_h = -g \MC{A}_h^{-1} \MC{M}_h^{1/2} \xi_h. %
}
It should be noted that the bilinear form $\MC{A}_h$ may not have the same eigenvectors as the covariance matrix induced by its kernel (\eqref{matern} and \eqref{expcov}). When $\psih$ are the eigenvectors of $\MC{A}_h$, \eqref{cov:discrete:std} and \eqref{KL:discrete} are equivalent. Here onwards, we assume $(\lambda_i,\psih_i)$ are the eigenvalue-eigenvector pairs of $\MC{A}^{-2}_h$, and are known and ordered.

\section{Multilevel Decomposition for Hierarchical Sampling} \label{sec:mldecomposition}

Solutions to \eqref{cov:discrete:std} with independently sampled $\xi_h$ yield realizations of GRF that are independent of the other levels in the hierarchy. For multilevel sampling, we require the fine-level fields to be conditioned on the coarse-level fields. Consider a hierarchy of nested spaces $\Theta_0 \subset \Theta_1 \ldots \subset \Theta_\ell \subset \Theta_{\ell+1} \ldots \subset \Theta_N \in H^1(\Omega_h)$. Without loss of generality, denote by subscripts $\ell$ and $L=\ell+1$, the coarse, $\Theta_\ell$, and fine, $\Theta_L$, spaces, respectively. Furthermore, denote by superscripts the realization of a field $\eta$ on one level on another (i.e. $\eta_\ell^L \in \Theta_l$ represents the coarse level realization of $\eta_\ell \in \Theta_\ell$ in the basis of the fine level $L$) with $\eta_\ell^L = \MC{I}\eta_\ell$ where $\MC{I}:\Theta_\ell \rightarrow \Theta_L$ is the interpolation/prolongation operator and $\MC{R}: \Theta_L \rightarrow \Theta_\ell$ is the restriction operator such that $\MC{R}\MC{I} = \mathrm{Id}$ in $\Theta_\ell$; it should be mentioned that the matrix representation of $\MC{R}\MC{I}$ is not identity since different bases are used for $\Theta_\ell$ and $\Theta_L$. In this section, we formulate multilevel decompositions of the field $\eta_h$ that enable sampling of realizations across different levels as well as sampling in the complement space $\Theta_L \backslash \Theta_\ell$. Furthermore, the decompositions will enable coupling between the (modal) \KL and (nodal) SPDE samplers.
To generate a random field on the fine level $\widetilde{\eta}_L$ conditioned on the coarse level $\eta_\ell^L$, we employ the following decomposition of $\eta$
\aligneq{mldecomp}{
    \widetilde{\eta}_L &= \underbrace{\MC{Q}_L \eta_\ell^L }_{\in \Theta_\ell} + \underbrace{(I - \MC{Q}_L) \eta_L}_{\in \Theta_L \textbackslash \Theta_\ell},
}
where the $L^2$ projection $\MC{Q}_L : \Theta_L \rightarrow \Theta_\ell$ will be constructed according to the samplers on the coarse and fine level. Throughout the remainder of the manuscript, we use the notation convention $f_\ell \simeq g_\ell$ to mean that there exist positive constants $c_1$ and $c_2$, independent of $\ell$, such that $c_1 f_\ell \leq g_\ell \leq c_2 f_\ell$ for all non-negative scalars $f_\ell$ and $g_\ell$. Additionally, we make the following assumptions:

\begin{assumption}\normalfont \label{assume:iid}
$\eta_\ell^L$ and $\eta_L$ are independent, hence, $\E{\eta_\ell^L (\eta_L)^T} = \E{\eta_\ell^L} \E{(\eta_L)^T} = 0$
\end{assumption}
\begin{assumption}\normalfont \label{assume:specequiv}
    Let the coarse level covariance, $\cov_\ell$, and the fine level covariance, $\cov_L$, be defined by bilinear forms on $\Theta_\ell$ and $\Theta_L$, respectively. Then, we assume that $\cov_\ell$ and $\cov_L$ are spectrally equivalent. That is, for $\eta_\ell \in \Theta_\ell$ and $\eta^L_\ell \in \Theta_L$, we have
    \expression{
        \Norm{\eta_\ell}_{\cov_\ell} \simeq \Norm{\eta^L_\ell}_{\cov_L} ~\forall \eta_\ell,
    }
    Furthermore, we have $\cov_\ell = \MC{I}^T \cov_L \MC{I}$ and 
    \expression{
        (\eta^L_\ell)^T \cov_L \eta^L_\ell = \eta^T_\ell \MC{I}^T \cov_L \MC{I} \eta_\ell =      \eta^T_\ell \cov_\ell \eta_\ell ~~\forall \eta_\ell.    
    }

\end{assumption}

The covariance of the coarse realization in fine basis also satisfies spectral equivalence and is given by the following lemma.
\begin{lemma}\normalfont \label{lemma:coarse_cov}
    Let $\eta_\ell \sim \MC{N}(0,K_\ell)$ be a random function on level $\ell$ with covariance $K_\ell$, and $\eta_\ell^L = \MC{I}\eta_\ell$ is the realization of $\eta_\ell$ in the basis of level $L$ with $\MC{R} = \MC{I}^T$.
    Then, $\E{\MC{Q}_L\eta_\ell^L~(\MC{Q}_L\eta_\ell^L)^T} = K^L_\ell \simeq K_\ell$.
\end{lemma}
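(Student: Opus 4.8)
The plan is to prove the statement in two short moves: first compute the covariance $K_\ell^L$ explicitly by pushing the expectation through the deterministic operators $\MC{I}$ and $\MC{Q}_L$, and then identify the resulting matrix as a grid‑transfer lift of $K_\ell$ to which the spectral–equivalence hypothesis of Assumption~\ref{assume:specequiv} applies. Since $\eta_\ell^L=\MC{I}\eta_\ell$ and $\MC{Q}_L$ is a projection whose range is $\Theta_\ell=\operatorname{ran}\MC{I}$, it fixes its range, so $\MC{Q}_L\eta_\ell^L=\eta_\ell^L$ (this uses only that $\MC{Q}_L$ is idempotent with range $\Theta_\ell$, not that it is $L^2$–orthogonal). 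Because $\MC{I}$ is deterministic and $\eta_\ell\sim\MC{N}(0,K_\ell)$, linearity of expectation gives
\[
    K_\ell^L=\E{\MC{Q}_L\eta_\ell^L\,(\MC{Q}_L\eta_\ell^L)^T}=\E{\eta_\ell^L(\eta_\ell^L)^T}=\MC{I}\,\E{\eta_\ell\eta_\ell^T}\,\MC{I}^T=\MC{I}K_\ell\MC{I}^T .
\]
I would also record the companion relation $\MC{R}K_\ell^L\MC{I}=K_\ell$, which follows from $\MC{R}=\MC{I}^T$ together with $\MC{R}\MC{I}=\mathrm{Id}$ on $\Theta_\ell$; this displays $K_\ell$ and $K_\ell^L$ in precisely the coarse/fine relationship of Assumption~\ref{assume:specequiv}.

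It then remains to show $\MC{I}K_\ell\MC{I}^T\simeq K_\ell$. I would argue this exactly as Assumption~\ref{assume:specequiv} is phrased for $\cov$, now with $K$ in its place: $K_\ell^L$ is the lift of $K_\ell$ through the transfer operators $\MC{I},\MC{R}$, and since these are bounded and stable uniformly in $\ell$ (the same property that underlies the spectral equivalence of $\cov_\ell$ and $\cov_L$), one obtains $\Norm{\eta_\ell}_{K_\ell}\simeq\Norm{\eta_\ell^L}_{K_\ell^L}$ for all $\eta_\ell\in\Theta_\ell$ with constants independent of $\ell$, i.e. $K_\ell^L\simeq K_\ell$ in the sense fixed after \eqref{mldecomp}.

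The only genuine difficulty here is bookkeeping rather than analysis: $K_\ell^L$ is an $n_L\times n_L$, rank‑$n_\ell$ matrix while $K_\ell$ is $n_\ell\times n_\ell$, so ``$\simeq$'' must be read through the prolongation $\MC{I}$ exactly as in Assumption~\ref{assume:specequiv}, and one must check that the equivalence constants do not deteriorate under refinement. This $\ell$–uniformity is the single point where the multigrid structure (rather than a purely algebraic manipulation) enters; the expectation computation and the identification $K_\ell^L=\MC{I}K_\ell\MC{I}^T$ are routine once $\MC{Q}_L\eta_\ell^L=\eta_\ell^L$ is observed. Note that Assumption~\ref{assume:iid} is not needed, since only a single random field appears.
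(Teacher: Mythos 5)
Your first two steps coincide with the paper's proof: observing $\MC{Q}_L\eta_\ell^L=\eta_\ell^L$ (projection fixing its range) and computing $K_\ell^L=\E{\eta_\ell^L(\eta_\ell^L)^T}=\MC{I}\,\E{\eta_\ell\eta_\ell^T}\,\MC{I}^T=\MC{I}K_\ell\MC{I}^T$ by pushing the expectation through the deterministic operators; your remark that Assumption~\ref{assume:iid} is not needed is also correct. The divergence is in how you close the equivalence $K_\ell^L\simeq K_\ell$. You present it as a genuinely analytic step requiring uniform-in-$\ell$ boundedness and stability of the transfer operators, call it ``the single point where the multigrid structure enters,'' and leave it unverified — you only assert that this stability ``underlies'' Assumption~\ref{assume:specequiv}, which is not a derivation. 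Under the lemma's hypotheses this appeal is both unnecessary and, as written, the one gap in your argument: the identities you already invoked for your companion relation ($\MC{R}=\MC{I}^T$ and $\MC{R}\MC{I}=\MC{I}^T\MC{I}=\mathrm{Id}$ on $\Theta_\ell$) close the step purely algebraically,
\[
(\eta_\ell^L)^T K_\ell^L\,\eta_\ell^L=(\MC{I}\eta_\ell)^T\,\MC{I}K_\ell\MC{I}^T\,(\MC{I}\eta_\ell)=\eta_\ell^T(\MC{I}^T\MC{I})\,K_\ell\,(\MC{I}^T\MC{I})\eta_\ell=\eta_\ell^T K_\ell\,\eta_\ell ,
\]
which is exactly what the paper does: the quadratic forms agree identically, so the spectral equivalence holds with $c_1=c_2=1$ and there are no constants whose behavior under refinement needs to be checked. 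So your proof is repaired in one line with material you already wrote — redirect $\MC{I}^T\MC{I}=\mathrm{Id}$ from the companion relation $\MC{R}K_\ell^L\MC{I}=K_\ell$ to the equivalence itself — but as submitted the final step rests on an unproved stability claim rather than on the hypothesis that actually does the work.
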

\begin{proof}
We begin by noting $ \MC{Q}_L\eta_\ell^L = \eta_\ell^L$ and 
\expression{
\E{\eta_\ell^L~(\eta_\ell^L)^T} = \MC{I}\E{\eta_\ell~\eta^T_\ell} \MC{I}^T = \MC{I} K_\ell \MC{I}^T
}
Then,
\aligneq{lemme:coarse_cov}{
\eta^T_\ell K_\ell~\eta_\ell &\simeq  (\eta_\ell^L)^T K^L_\ell \eta_\ell^L  \\
&= (\eta_\ell^L)^T  \MC{I} K_\ell \MC{I}^T \eta_\ell^L \\
&= \eta^T_\ell \MC{I}^T  \MC{I} K_\ell \MC{I}^T \MC{I} \eta_\ell \\
&= \eta^T_\ell K_\ell \eta_\ell ,
}
where $\MC{R}\MC{I} = \MC{I}^T \MC{I} = \mathrm{Id}$. Hence, $c_1 = c_2 = 1$.
\end{proof}
   
Following Assumptions \ref{assume:iid} and \ref{assume:specequiv}, and Lemma \ref{lemma:coarse_cov} we present the main theorem of this work which will enable coupling between different sampling techniques across levels.
\begin{theorem}\normalfont \label{theorem:decomp}
Let $\eta_\ell \sim \MC{N}(0,K_\ell)$ and $\eta_L \sim \MC{N}(0,K_L)$ be random functions on levels $\ell$ and $L$ with covariances $K_\ell$ and $K_L$, respectively. Let $K_\ell$ and $K_L$ be spectrally equivalent. Then, the decomposition satisfies
\expression{
    \widetilde{\eta}_L = \MC{Q}_L \eta^L_\ell + (I - \MC{Q}_L) \eta_L \sim \MC{N}(0,K_L).
}
\end{theorem}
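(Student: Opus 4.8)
The plan is to verify, in turn, the three properties that characterize $\MC{N}(0,K_L)$ — Gaussianity, vanishing mean, and the covariance — with only the last being substantive, and with "the covariance equals $K_L$" understood (as in Lemma~\ref{lemma:coarse_cov}) in the spectral‑equivalence sense. Gaussianity and the mean are immediate: by the decomposition \eqref{mldecomp}, $\WT{\eta}_L = \MC{Q}_L\MC{I}\eta_\ell + (I-\MC{Q}_L)\eta_L$ is a fixed linear map applied to the pair $(\eta_\ell,\eta_L)$, which is jointly Gaussian because $\eta_\ell$ and $\eta_L$ are independent by Assumption~\ref{assume:iid}; and $\E{\WT{\eta}_L} = \MC{Q}_L\MC{I}\,\E{\eta_\ell} + (I-\MC{Q}_L)\,\E{\eta_L} = 0$. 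So the whole content is to show $\E{\WT{\eta}_L\WT{\eta}_L^T} \simeq K_L$.

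Next I would expand $\E{\WT{\eta}_L\WT{\eta}_L^T}$ into its four bilinear pieces. The two cross pieces each take the form $\MC{Q}_L\,\E{\eta^L_\ell(\eta_L)^T}(I-\MC{Q}_L)^T$ or its transpose, and these vanish by Assumption~\ref{assume:iid} since $\E{\eta^L_\ell(\eta_L)^T} = \MC{I}\,\E{\eta_\ell}\,\E{\eta_L}^T\MC{I}^T = 0$. The coarse–coarse piece is $\E{\MC{Q}_L\eta^L_\ell(\MC{Q}_L\eta^L_\ell)^T} = K^L_\ell$ by Lemma~\ref{lemma:coarse_cov} (which in particular used $\MC{Q}_L\eta^L_\ell = \eta^L_\ell$), and the remaining piece is $(I-\MC{Q}_L)\,\E{\eta_L\eta_L^T}\,(I-\MC{Q}_L)^T = (I-\MC{Q}_L)K_L(I-\MC{Q}_L)^T$. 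Thus the theorem reduces to the matrix statement $K^L_\ell + (I-\MC{Q}_L)K_L(I-\MC{Q}_L)^T \simeq K_L$.

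To close this I would argue with quadratic forms, mirroring the proof of Lemma~\ref{lemma:coarse_cov}: fix $v\in\Theta_L$ and split $v = \MC{Q}_L^Tv + (I-\MC{Q}_L)^Tv$, where the first summand carries the $\Theta_\ell$‑footprint of $v$ — note $\MC{I}^T\MC{Q}_L^Tv = \MC{I}^Tv$ because $\MC{Q}_L\MC{I} = \MC{I}$ — and the second lies in the complement $\Theta_L\backslash\Theta_\ell$. Using $K_\ell = \MC{I}^TK_L\MC{I}$ and $\MC{R}\MC{I} = \MC{I}^T\MC{I} = \mathrm{Id}$ (Assumption~\ref{assume:specequiv} and Lemma~\ref{lemma:coarse_cov}) one rewrites $v^TK^L_\ell v = (\MC{I}\MC{I}^Tv)^TK_L(\MC{I}\MC{I}^Tv)$, so that $v^T\big(K^L_\ell + (I-\MC{Q}_L)K_L(I-\MC{Q}_L)^T\big)v$ becomes a sum of $K_L$‑norms of the coarse and complement parts of $v$; the lower bound against $v^TK_Lv$ is then automatic from positive semidefiniteness of both summands. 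The upper bound is where the work lies, because $\Norm{v}_{K_L}^2$ differs from that sum by the cross term $2\langle\,\text{coarse part},\ \text{complement part}\,\rangle_{K_L}$: this must be absorbed by a strengthened Cauchy–Schwarz inequality between the nested spaces $\Theta_\ell\subset\Theta_L$, valid with a constant strictly below $1$ and uniform in $\ell$ precisely by the spectral equivalence of Assumption~\ref{assume:specequiv} within the multigrid hierarchy; when $\MC{Q}_L$ is in addition chosen so that the coarse and complement contributions are $K_L$‑orthogonal, this cross term vanishes outright and the equivalence constants collapse to $1$. The main obstacle, then, is exactly this control of the coarse–complement cross term $\MC{Q}_LK_L(I-\MC{Q}_L)^T + (I-\MC{Q}_L)K_L\MC{Q}_L^T$ with constants independent of the level — everything preceding it (Gaussianity, the death of the sampling cross terms, the bookkeeping for $K^L_\ell$) is routine, and it is this near‑$K_L$‑orthogonality requirement that forces $\MC{Q}_L$ to be "constructed according to the samplers" rather than chosen arbitrarily.
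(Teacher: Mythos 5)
Your setup matches the paper's proof: the mean, the use of independence (Assumption~\ref{assume:iid}) to kill the cross expectations, Lemma~\ref{lemma:coarse_cov} for the coarse block, and the fine block $(I-\MC{Q}_L)K_L(I-\MC{Q}_L)^T$ are all exactly as in the paper. The gap is in how you close the argument. The paper does not need a strengthened Cauchy--Schwarz inequality, uniform-in-level constants, or any passage to mere spectral equivalence: the projection $\MC{Q}_L$ appearing in \eqref{mldecomp} is, by construction, symmetric with respect to $K_L$, i.e.\ $\MC{Q}_L K_L = K_L \MC{Q}_L^T = \MC{Q}_L K_L \MC{Q}_L^T = K^L_\ell$, and together with idempotency $\MC{Q}_L^2=\MC{Q}_L$ this makes the expansion collapse exactly: $K^L_\ell + K_L - \MC{Q}_L K_L - K_L\MC{Q}_L^T + \MC{Q}_L K_L \MC{Q}_L^T = K_L$. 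This is not a "special case" as you frame it ("when $\MC{Q}_L$ is in addition chosen so that the coarse and complement contributions are $K_L$-orthogonal"); it is the actual situation, verified explicitly in Section~\ref{subsec:pdedecomp} for $\MC{Q}_L=P\Pi_{\MC{M}}$ with $K_L=\MC{M}_L^{-1}$ (where $\MC{Q}_L\MC{M}_L^{-1}=\MC{M}_L^{-1}\MC{Q}_L^T=P\MC{M}_\ell^{-1}P^T$) and in Section~\ref{subsec:klpdedecomp} for $\MC{Q}_L=P\WH{\MC{Q}}\Pi$. So the coarse--complement cross term you identify as "the main obstacle" vanishes identically; leaving it to an unproven strengthened Cauchy--Schwarz estimate is a genuine hole in your argument.

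A second, related problem is that you weaken the target to $K^L_\ell + (I-\MC{Q}_L)K_L(I-\MC{Q}_L)^T \simeq K_L$. The theorem asserts $\widetilde{\eta}_L \sim \MC{N}(0,K_L)$, which requires the covariance to equal $K_L$ (this exactness is what guarantees the multilevel samples are drawn from the correct distribution); spectral equivalence with unspecified constants would not suffice, and your lower/upper-bound strategy cannot recover equality. Incorporating the $K_L$-symmetry identity above both removes the obstacle and upgrades your conclusion to the one actually claimed.
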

\begin{proof}
    The condition on expectation $\E{\widetilde{\eta}_L} = \MC{Q}_L \E{\eta^L_\ell} + (I - \MC{Q}) \E{\eta_L}=0$ follows trivially. To show that $\E{\widetilde{\eta}_L~(\widetilde{\eta}_L)^T} = K_L$, we compute
    \aligneq{thm:decomp:covariance}{
        \E{\widetilde{\eta}_L~(\widetilde{\eta}_L)^T} &= \E{\MC{Q}_L \eta^L_\ell (\MC{Q}_L \eta^L_\ell)^T} + \E{(I - \MC{Q}) \eta_L (I - \MC{Q}) \eta_L^T} \\
        &=  \E{\eta^L_\ell (\eta^L_\ell)^T} + (I - \MC{Q}) \E{\eta_L \eta_L^T} (I - \MC{Q}^T) \\
        &=  K^L_\ell  + (I - \MC{Q}) K_L (I - \MC{Q}^T) \\
        &=  K^L_\ell  + K_L - \MC{Q} K_L - K_L\MC{Q}^T + \MC{Q} K_L \MC{Q}^T \\
        &=  K_L, \\
    }
    where we have exploited the idempotency ($\MC{Q}^2 = \MC{Q}$) and the symmetry of the projection $\MC{Q} K_L = K_L \MC{Q}^T = \MC{Q} K_L \MC{Q}^T = K^L_\ell$.
\end{proof}

\subsection{Hierarchical Sampling: Multigrid Decomposition} \label{subsec:pdedecomp}

Here, we seek to define appropriate projections, $\MC{Q}$, for the PDE hierarchy that preserve the covariance on the coarse and fine levels. For the SPDE sampling, we apply appropriate transformations $\xi_h \sim \MC{N}(0,I)$ to compute the forcing term $\zeta_h = \MC{M}_h \MC{W}_h = \MC{M}^{1/2}_h \xi_h \sim \MC{N}(0,\MC{M}_h)$.

Consider a covariance $K_L$ defined by a symmetric, positive-definite (SPD) bilinear form $a(u_L,\phi_L)$ with $u_L,\phi_L \in \Theta_L$. Then, the Galerkin projection, $\pi_A: \Theta_L \rightarrow \Theta_\ell$ , of $u_L \in \Theta_L$ to $u_\ell \in \Theta_\ell$ that solves 
\expression{
    a(u_L-u_\ell,\phi_L) = 0, \quad \forall \phi_L \in \Theta_L
}
is defined as $\MC{Q}_L = P \Pi_A$ with $\Pi_A P = I$, where $P$ is the prolongation/interpolation operator that represents $u_\ell$ on the coarse level in the basis on the fine level $\varphi_L$ (i.e. $u^L_\ell$) and $\Pi_A = A^{-1}_\ell P^T A_L$ is the restriction operator with the discretized bilinear form, $A$. Using this construction, we can define the projection that preserves the covariance of the multilevel samples of $\eta$ on all levels.
As in \cite{Fairbanks2021}, we perform decomposition of $ \MC{W} \sim \MC{N}(0,\MC{M}^{-1})$ (and equivalently $ \zeta \sim \MC{N}(0,\MC{M})$) with \eqref{mldecomp}, and hence, defined the $L^2$ projection $\MC{Q}_L = P\Pi_{\MC{M}}$ with $\Pi_{\MC{M}} = \MC{M}^{-1}_\ell P^T \MC{M}_L$. Here, we can verify the idempotency $\MC{Q}^2_L = \MC{Q}_L$. Furthermore, using the relation $\MC{M}_\ell = P^T \MC{M}_L P$, we can show that $\MC{Q}_L \MC{M}^{-1}_L = \MC{M}^{-1}_L \MC{Q}_L^T = \MC{Q}_L \MC{M}^{-1}_L \MC{Q}_L^T = P \MC{M}^{-1}_\ell P^T$. 
From Lemma \ref{lemma:coarse_cov}, we have
\[
    P \MC{M}^{-1}_\ell P^T = (\MC{M}^L_\ell)^{-1} \simeq \MC{M}_\ell^{-1}.
\]
For convenience, here onwards, we ommit the subscript $\MC{M}$ from the restriction operator. Hence, unless otherwise stated, $\Pi = \Pi_\MC{M}$.

We define the fine realization of a coarse field as $P \Pi \MC{W}_\ell^L = P \MC{W}_\ell$ and a multilevel realization of the fine level standard white noise, $\widetilde{\MC{W}}_L$, conditioned on the coarse level noise, $\MC{W}_\ell$, as
\aligneq{hpde:decomp}{
    \widetilde{\MC{W}}_L &= P \Pi \MC{W}_\ell^L + (I - P\Pi) \MC{W}_L\\
                         &= P \MC{W}_\ell + (I - P \Pi) \MC{W}_L \\
                         &= \underbrace{P \MC{M}^{-1/2}_\ell \xi_\ell}_{\in \Theta_\ell} + \underbrace{(I - P \Pi) \MC{M}^{-1/2}_L \xi_L}_{\in \Theta_L \textbackslash \Theta_\ell}. \\
}
Since $\MC{W}_\ell$ and $\MC{W}_L$ are independent, from Theorem \ref{theorem:decomp} we have that $\widetilde{\MC{W}}_L \sim \MC{N}(0,\MC{M}^{-1}_L)$, hence $\widetilde{\zeta}_L = \MC{M}_L \widetilde{\MC{W}}_L \sim \MC{N}(0,\MC{M}_L)$. Then, samples of GRF on levels $\ell$ and $L$ can be computed as 
\aligneq{hpde:coarse-sample}{
   \MC{N}(0,\cov_\ell) \sim \theta_\ell = \MC{A}_\ell^{-1} \zeta_\ell = \MC{A}_\ell^{-1} \MC{M}_\ell \MC{W}_\ell = \MC{A}_\ell^{-1} \MC{M}_\ell^{1/2} \xi_\ell, \\
}
and
\aligneq{hpde:fine-sample}{
   \MC{N}(0,\cov_L) \sim \widetilde{\theta}_L &= \MC{A}_L^{-1} \widetilde{\zeta}_L \\
                            &= \MC{A}_L^{-1} \MC{M}_L \widetilde{\MC{W}}_L\\
                            &= \MC{A}_L^{-1} \rbrac{ \MC{M}_L P \MC{M}^{-1/2}_\ell \xi_\ell +  \MC{M}_L (I - P \Pi) \MC{M}^{-1/2}_L \xi_L}\\
                            &= \MC{A}_L^{-1} \rbrac{ \MC{M}_L P \MC{M}^{-1/2}_\ell \xi_\ell +  (I - \Pi^T P^T) \MC{M}^{1/2}_L \xi_L} \\
                            &= \MC{A}_L^{-1} \big( \underbrace{ \Pi^T \zeta_\ell}_{ \in \Theta_\ell} + \underbrace{(I - \Pi^T P^T) \zeta_L}_{ \in \Theta_L \textbackslash \Theta_\ell } \big) . \\
}

Figure \ref{fig:spde:multilevelgrfs} shows a multilevel realization of a Gaussian random field sampled using the SPDE approach. It can be seen that the coarse level captures a significant portion of the structure of the realization, while the fine level captures the high-frequency components in the complement space, $\Theta_L \backslash \Theta_\ell$, that are not captured by the coarse level.

\begin{figure}[H] \centering
\begin{subfigure}[h]{0.33\textwidth} %
\includegraphicsifexists[trim={4cm 4cm 4cm 4cm},clip,width=\textwidth]{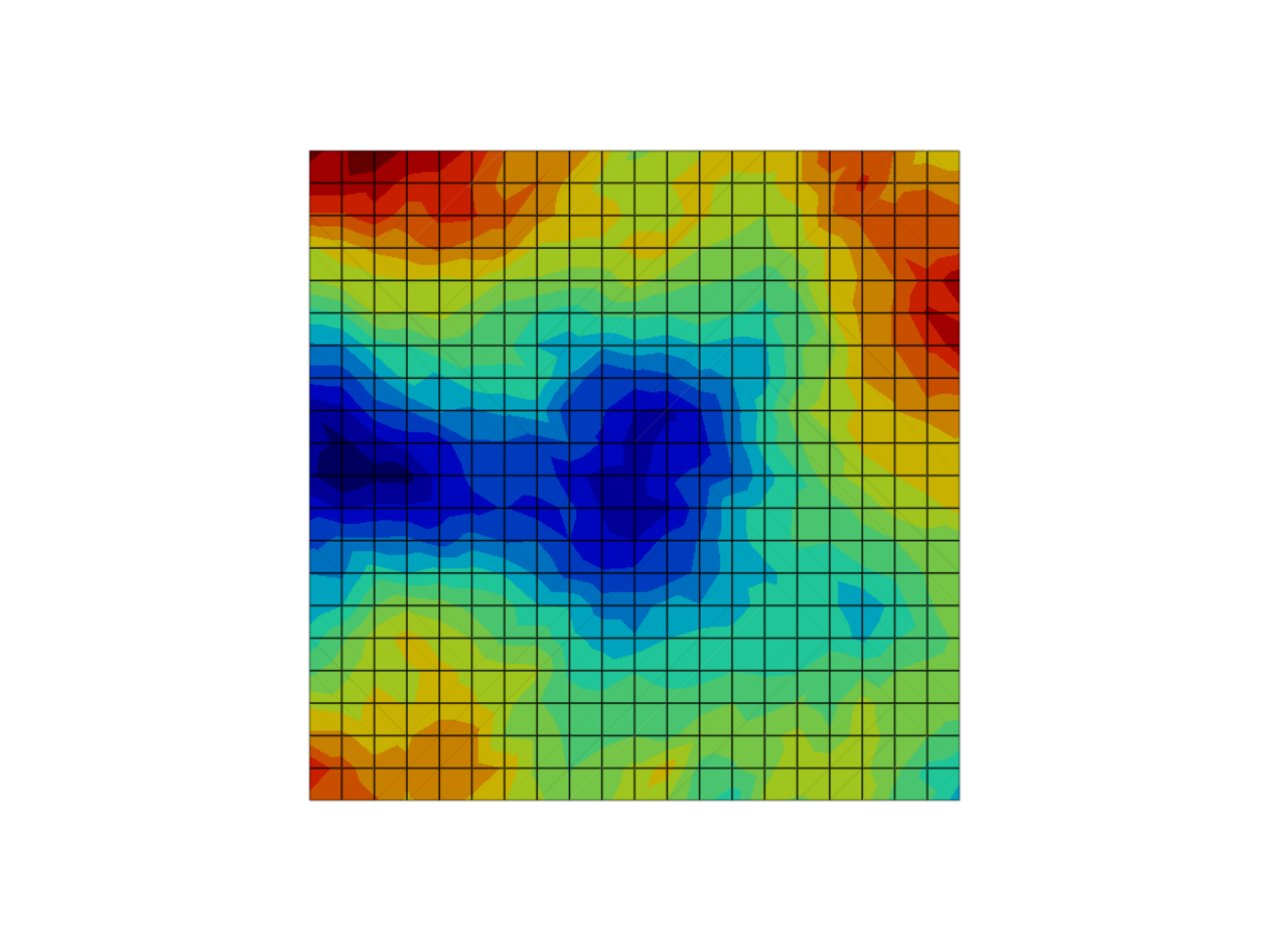} \caption{$\theta_0 \in \Theta_0$}\label{fig:spde:l0}
\end{subfigure}
\begin{subfigure}[h]{0.33\textwidth} %
\includegraphicsifexists[trim={4cm 4cm 4cm 4cm},clip,width=\textwidth]{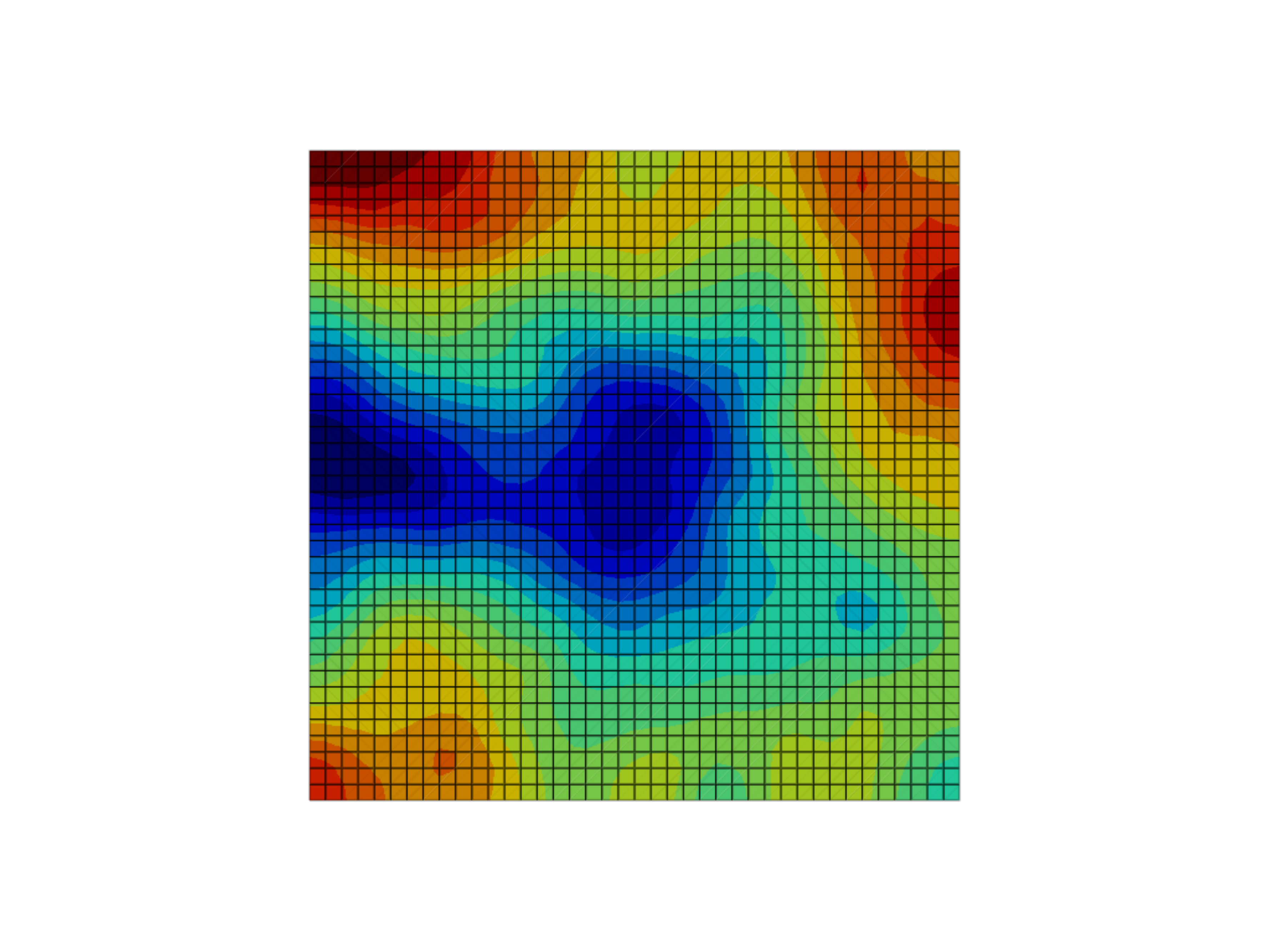} \caption{$\theta^1_0 \in \Theta_0$}\label{fig:spde:prolongate:l0}
\end{subfigure}
\begin{subfigure}[h]{0.33\textwidth} %
\includegraphicsifexists[trim={4cm 4cm 4cm 4cm},clip,width=\textwidth]{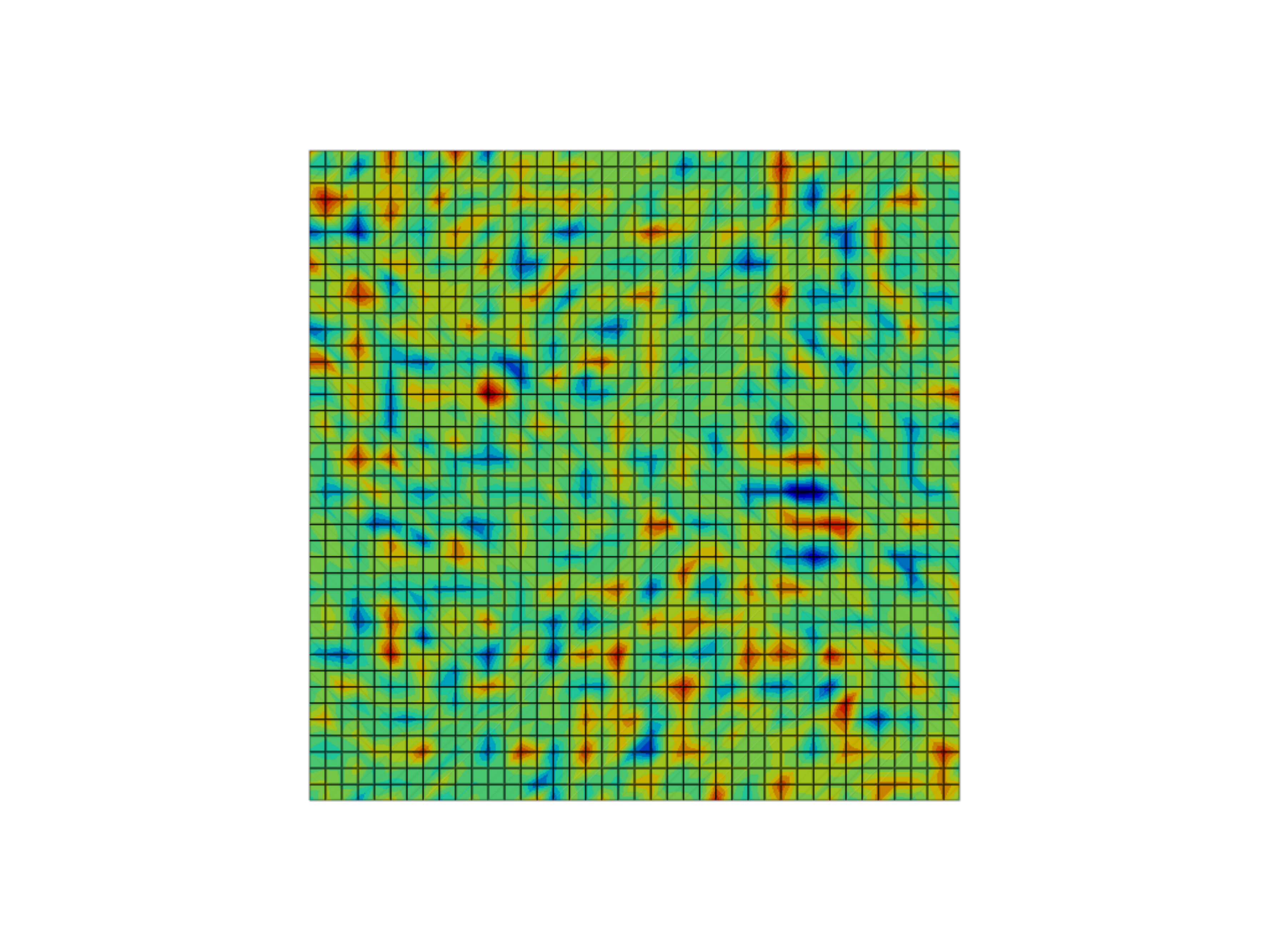} \caption{$\theta^\perp_1 \in \Theta_1 \backslash \Theta_0$}\label{fig:spde:complement:l1}
\end{subfigure}
\begin{subfigure}[h]{0.33\textwidth} %
\includegraphicsifexists[trim={4cm 4cm 4cm 4cm},clip,width=\textwidth]{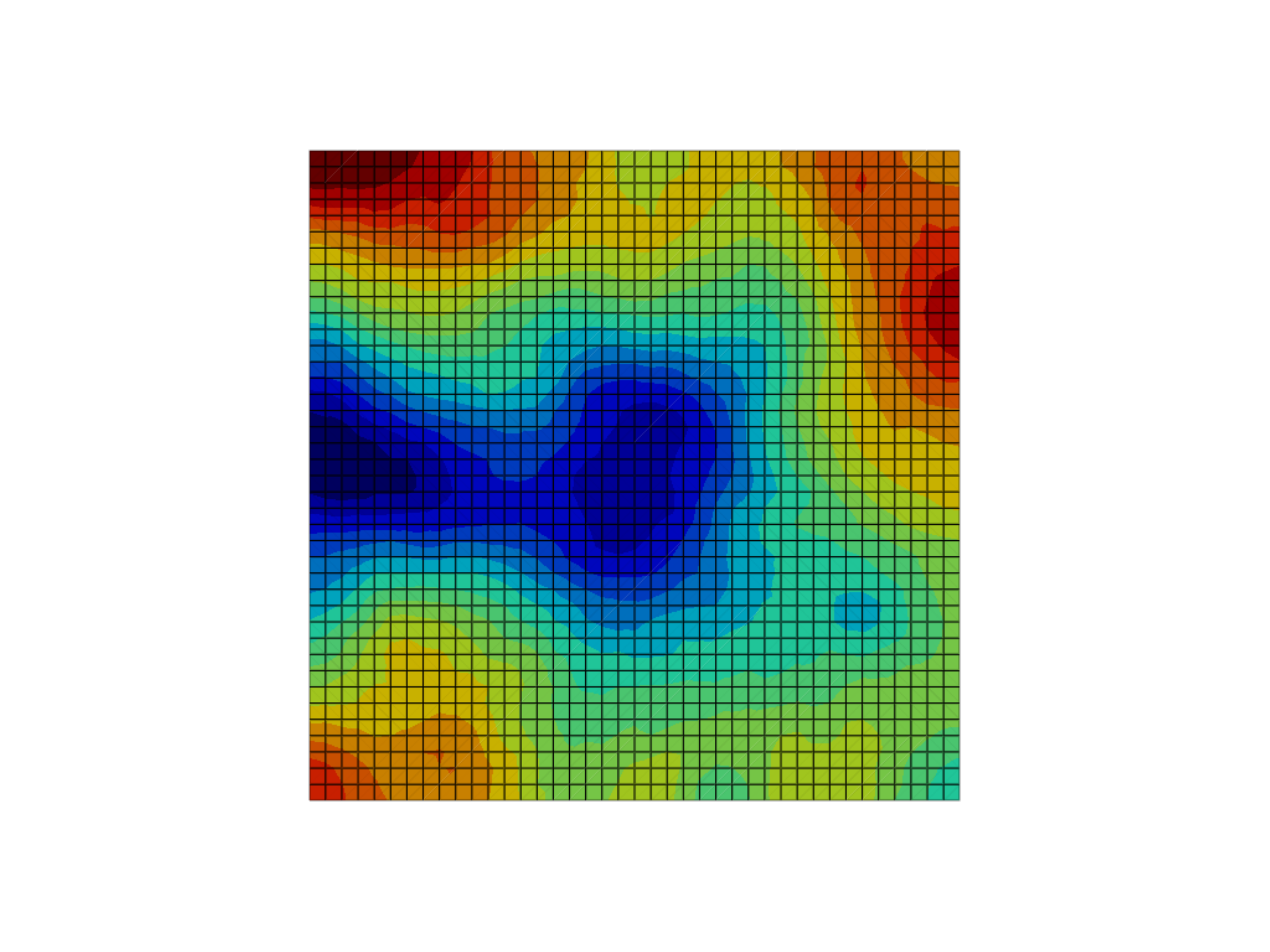} \caption{$\widetilde{\theta}_1 = \theta^1_0 + \theta^\perp_1 \in \Theta_1$}\label{fig:spde:complete:l1}
\end{subfigure}
\captionsetup{singlelinecheck=off,font=footnotesize}
\caption[]{A realization of a Gaussian random field across the levels of a two-level hierarchy sampled using the SPDE approach with: a) a sample on the coarse level, b) the coarse sample on the fine-level, c) fine-level sample in the coarse complement, and d) the complete multilevel realization of the GRF.}
\label{fig:spde:multilevelgrfs}
\end{figure}

\subsection{Hierarchical Sampling: Coupled \KL-Multigrid Decomposition} \label{subsec:klpdedecomp}

Here, we construct a projection that enables coupling between the KL and SPDE samplers. We consider an $m-$truncated KL expansion (\eqref{KL:discrete}) on the coarse level $\ell$ and an SPDE sampler on the fine level $L$. Since we use a truncated set of eigenvectors $\Psih = [\psih_1,\ldots,\psih_m] \in \R^{N \times m}$, we have the following space inclusion $\Thetah \subset \Theta_\ell \subset \Theta_L$ with $\Thetah = \mathrm{span}(\Psih)$. Applying the decomposition in \eqref{hpde:decomp} yields $\widetilde{\eta}_L \in \Thetah \cup \Theta_L \backslash \Theta_\ell$ but not in $\Theta_\ell \backslash \Thetah$, and therefore results in multilevel MCMC sampling that is not ergodic in $\Theta_L$. Note that when $\Psih$ is complete, $\Thetah = \Theta_\ell$ and ergodicity in $\Theta_L$ is restored. 

Hence, for complete sampling of $\Theta_L$ we define the projection $\MC{Q}_L = P \WH{\MC{Q}} \Pi$ which yields the following decomposition
\aligneq{klspde:decomp:almost-complete}{
    \widetilde{\MC{W}}_L &= \underbrace{P \WH{\MC{Q}} \Pi_{\MC{M}} \WH{\MC{W}}^L_\ell}_{\in \Thetah} + \underbrace{(I - P \WH{\MC{Q}} \Pi_{\MC{M}}) \MC{W}_L}_{\in \Theta_L \textbackslash \Thetah}, %
}
with the $L^2$ projection $\WH{\MC{Q}} =  \Psih \Psih^T : \Theta_\ell \rightarrow \Thetah$, where $\MC{W}_\ell = \Pi \MC{W}^L_\ell \in \Theta_\ell$, $\WH{\MC{W}}_\ell = \WH{\MC{Q}} \MC{W}_\ell \in  \Thetah$. It is easy to verify that both $\WH{\MC{Q}}$ and $P\WH{\MC{Q}} \Pi$ are idempotent. Using the idempotency of $\WH{\MC{Q}}$ and the commutative property of $\WH{\MC{Q}}$ and $\MC{M}_\ell$ (i.e. $\WH{\MC{Q}}\MC{M}_\ell = \MC{M}_\ell\WH{\MC{Q}}$), we can show that
\[
    P \WH{\MC{Q}} \Pi \MC{M}^{-1}_L = \MC{M}^{-1}_L \Pi^T \WH{\MC{Q}} P^T = P \WH{\MC{Q}} \Pi \MC{M}^{-1}_L \Pi^T \WH{\MC{Q}} P^T = P \MC{M}^{-1}_\ell \Psih \Psih^T P^T = (\WH{\MC{M}}^L_\ell)^{-1} \simeq \WH{\MC{M}}_\ell^{-1} \simeq \MC{M}_\ell^{-1},
\]
where 
\begin{align*}
    P \MC{M}^{-1}_\ell \Psih \Psih^T P^T &= P \Psih \Psih^T \MC{M}^{-1}_\ell \Psih \Psih^T P^T  \\
    & = (\Psih^L) \Psih^T \MC{M}^{-1}_\ell \Psih (\Psih^L)^T \\
    & = (\Psih^L) \WH{\MC{M}}^{-1}_\ell (\Psih^L)^T = (\WH{\MC{M}}^L_\ell)^{-1} \simeq \WH{\MC{M}}_\ell^{-1} \simeq \MC{M}_\ell^{-1}, \\
\end{align*}
$\WH{\MC{M}}^{-1}_\ell = \Psih^T \MC{M}^{-1}_\ell \Psih$ 
, and $\Psih^L = P \Psih$ represent the coarse-level KL-basis on the fine level.
Furthermore, we have 
\begin{align*}
    \WH{\MC{W}}_\ell = \MC{M}_\ell^{-1/2} \WH{\xi}_\ell &= \MC{M}_\ell^{-1/2} \WH{\MC{Q}} \xi_\ell = \MC{M}_\ell^{-1/2} \Psih (\Psih,\xi_\ell),
\end{align*}
where $(\psih_i,\xi_\ell) \sim \MC{N}(0,1)$ are the coefficients sampled in the KL sampler. Then, the complete decomposition that ensures the MCMC sampler is ergodic in $\Theta_L$, is given by
\aligneq{klspde:decomp:complete}{
    \widetilde{\MC{W}}_L &= \underbrace{P \MC{M}_\ell^{-1/2} \Psih \Vec{\WH{\xi}}}_{\in \Thetah} + \underbrace{(I - P \Psih \Psih^T \Pi) \MC{W}_L}_{\in \Theta_L \textbackslash \Thetah} ,\\    
}
where, $\Vec{\WH{\xi}} \in \R^m$ is a vector of the sampled KL coefficients. Noting that $\MC{M}_\ell$ is full rank and $\Psih$ is not complete, we get 
\[
    \WH{\MC{W}}_\ell = \WH{\MC{Q}} \MC{M}_\ell^{-1/2} \WH{\MC{Q}} \xi_\ell = \Psih \Psih^T \MC{M}_\ell^{-1/2} \Psih (\Psih,\xi_\ell) = \Psih \WH{\MC{M}}_\ell^{-1/2} (\Psih,\xi_\ell),
\]
and an alternative form of the multilevel decomposition 
\aligneq{klspde:decomp:alternative}{
    \widetilde{\MC{W}}_L &= \underbrace{P \Psih \WH{\MC{M}}_\ell^{-1/2} \Vec{\WH{\xi}}}_{\in \Thetah} + \underbrace{(I - P \Psih \Psih^T \Pi) \MC{W}_L}_{\in \Theta_L \textbackslash \Thetah}, \\    
}
with $\WH{\MC{M}}^{-1/2} \in \R^{m \times m}$. We employ the decomposition in \eqref{klspde:decomp:alternative} to couple the KL and SPDE samplers across multiple levels. Figure \ref{fig:klspde:multilevelgrfs} shows a KL realization of the GRF in Fig. \ref{fig:spde:multilevelgrfs} sampled using the coupled KL-SPDE approach. It can be seen that additional, detailed structure is captured on the coarse level by increasing number of KL modes. For a KL-SPDE sampler with $\mathpzc{m_\ell}$ modes, the dimensionality of the complement sample space on the subsequent finer level is $\mathpzc{n}_L - \mathpzc{m}_\ell$. Hence, the high-frequency components in the complement space, $\Theta_L \backslash \Thetah$, are more visible for configuration with fewer modes on the coarse level.

\begin{figure}[H] \centering
\begin{subfigure}[h]{0.3\textwidth} %
\includegraphicsifexists[trim={4cm 4cm 4cm 4cm},clip,width=\textwidth]{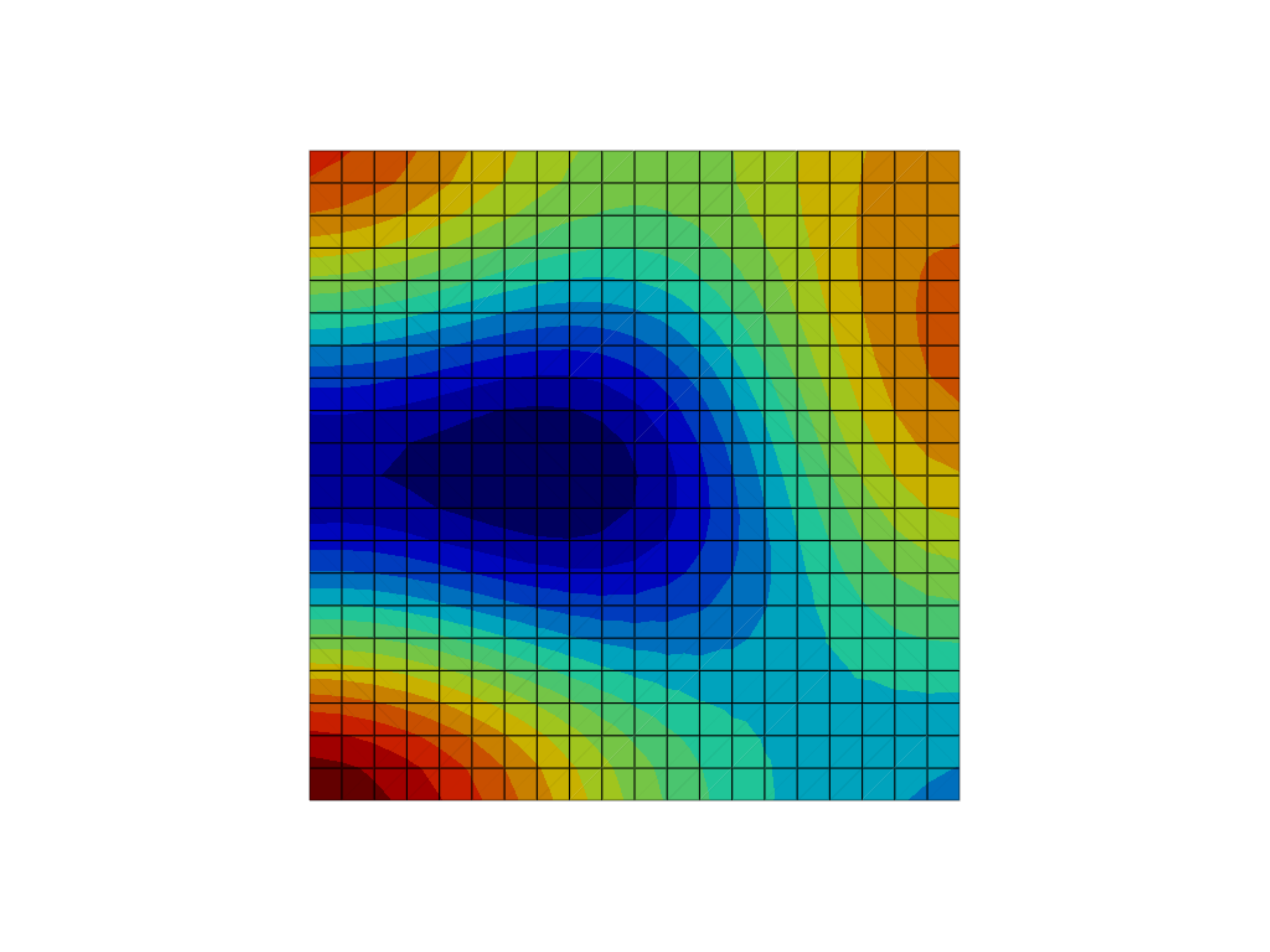} \caption{$\WH{\theta}_0 \in \WH{\Theta}_{|\Psih|}$ with $|\Psih|=10$}\label{fig:kl:10modes:l0}
\end{subfigure}
\begin{subfigure}[h]{0.3\textwidth} %
\includegraphicsifexists[trim={4cm 4cm 4cm 4cm},clip,width=\textwidth]{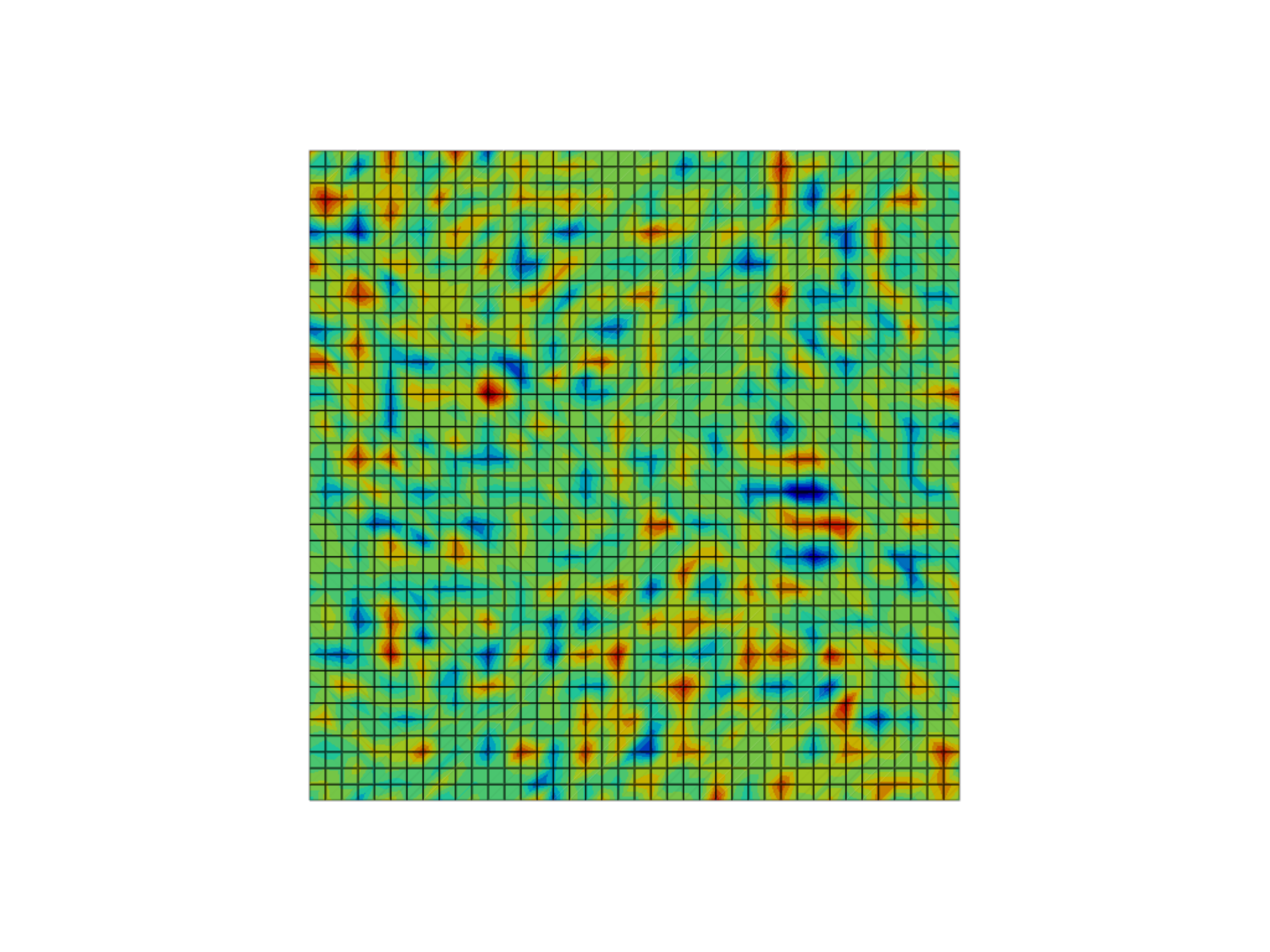} \caption{$\theta^\perp_1 \in \Theta_1 \backslash \WH{\Theta}_{10}$}\label{fig:kl:10modes:complement:l0}
\end{subfigure}
\begin{subfigure}[h]{0.3\textwidth} %
\includegraphicsifexists[trim={4cm 4cm 4cm 4cm},clip,width=\textwidth]{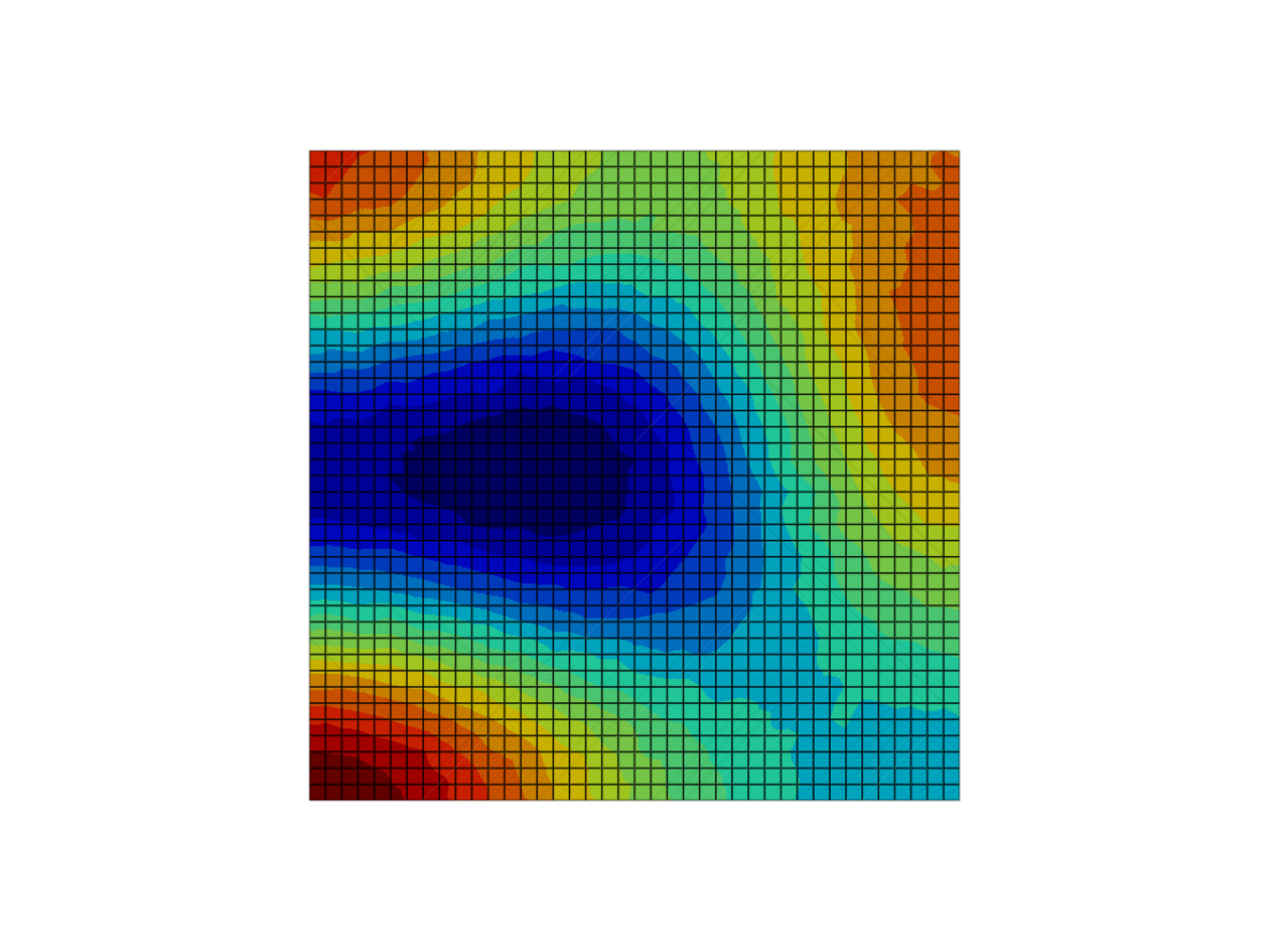} \caption{$\widetilde{\theta}_1 = \WH{\theta}^1_0 + \theta^\perp_1 \in \Theta_1$}\label{fig:kl:10modes:l1}
\end{subfigure}
\begin{subfigure}[h]{0.3\textwidth} %
\includegraphicsifexists[trim={4cm 4cm 4cm 4cm},clip,width=\textwidth]{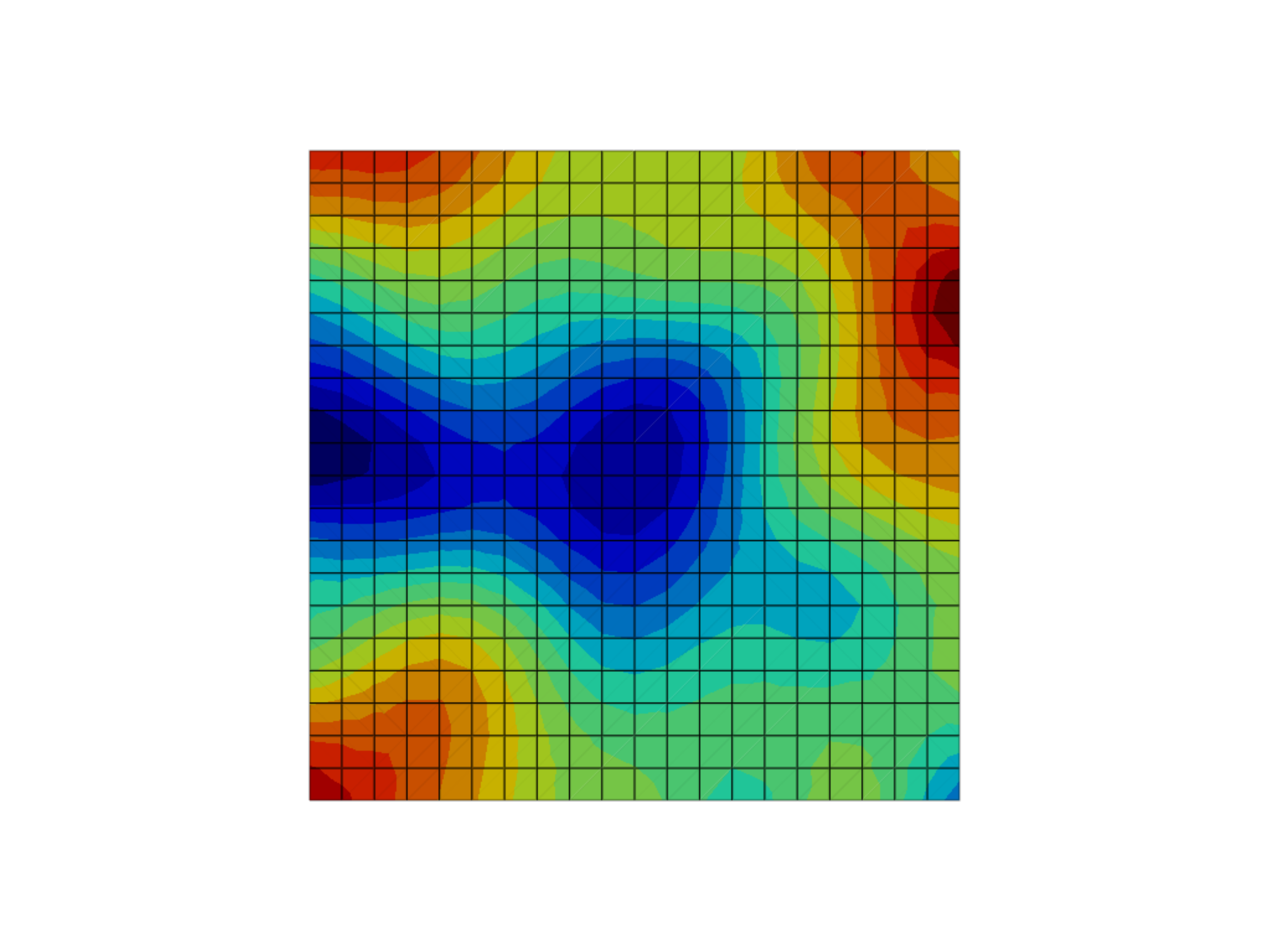} \caption{$\theta_0 \in \WH{\Theta}_{|\Psih|}$ with $|\Psih|=50$}\label{fig:kl:50modes:l0}
\end{subfigure}
\begin{subfigure}[h]{0.3\textwidth} %
\includegraphicsifexists[trim={4cm 4cm 4cm 4cm},clip,width=\textwidth]{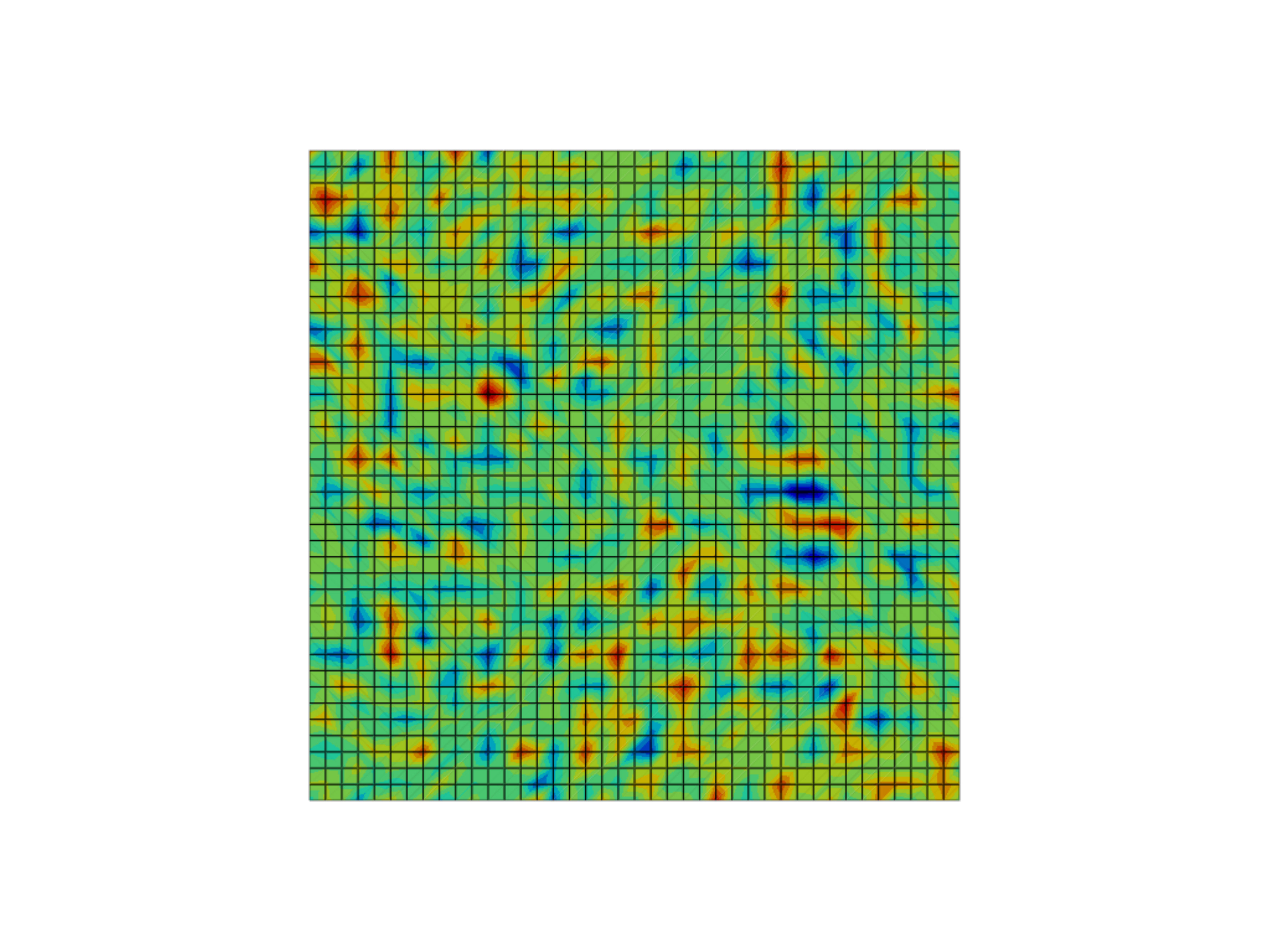} \caption{$\theta^\perp_1 \in \Theta_1 \backslash \WH{\Theta}_{50}$}\label{fig:kl:50modes:complement:l0}
\end{subfigure}
\begin{subfigure}[h]{0.3\textwidth} %
\includegraphicsifexists[trim={4cm 4cm 4cm 4cm},clip,width=\textwidth]{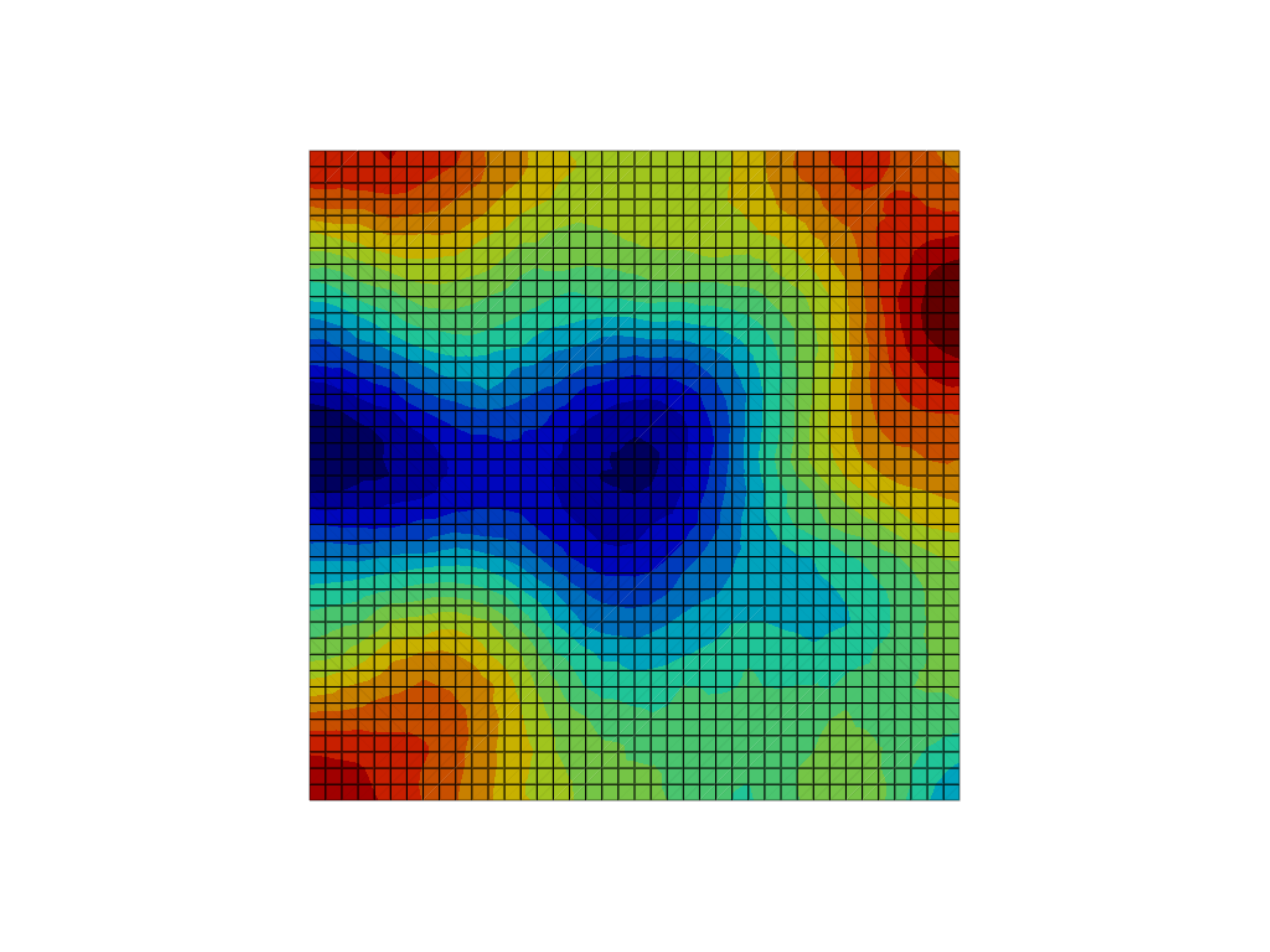} \caption{$\widetilde{\theta}_1 = \WH{\theta}^1_0 + \theta^\perp_1 \in \Theta_1$}\label{fig:kl:50modes:l1}
\end{subfigure}
\begin{subfigure}[h]{0.3\textwidth} %
\includegraphicsifexists[trim={4cm 4cm 4cm 4cm},clip,width=\textwidth]{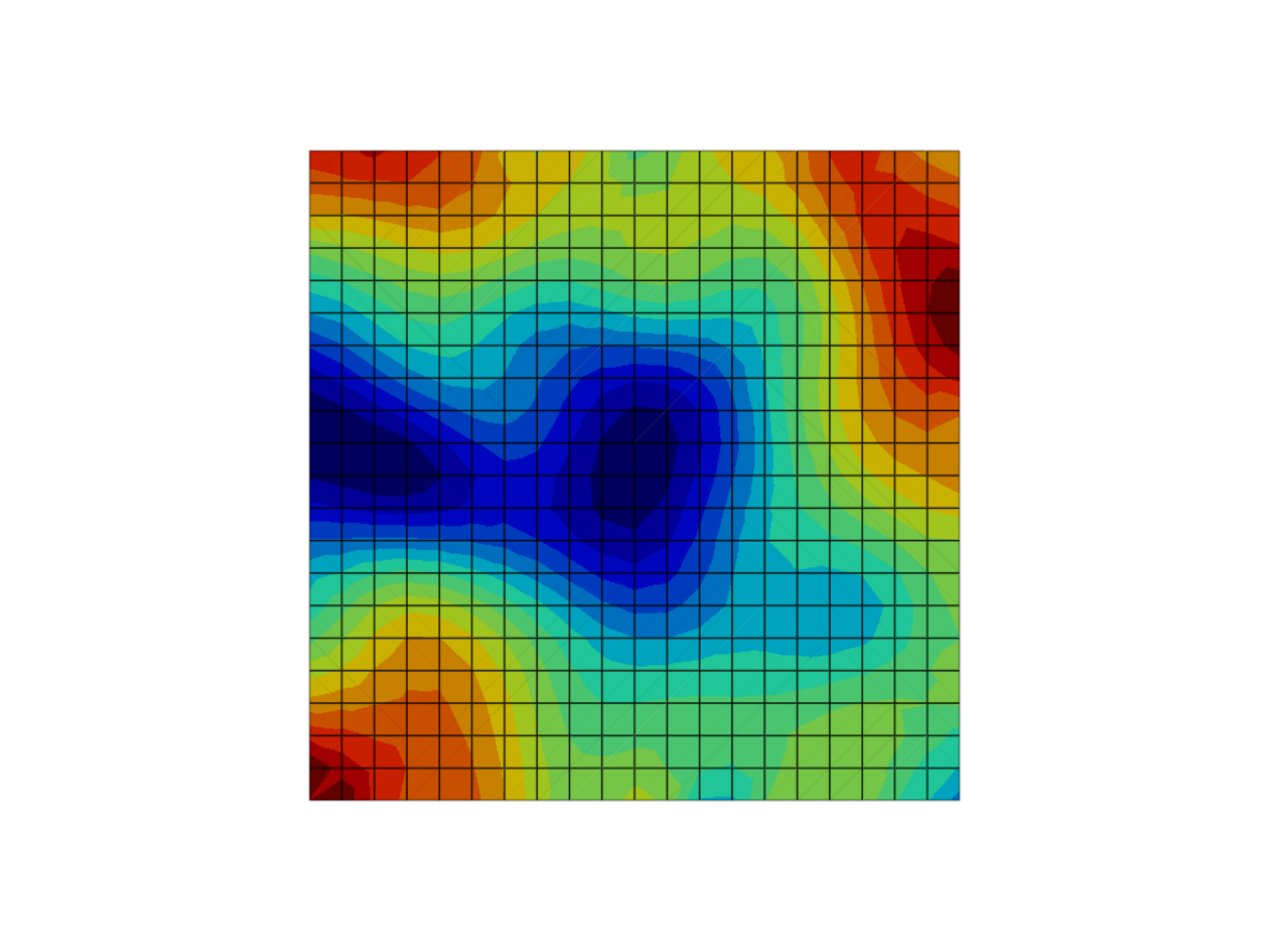} \caption{$\theta_0\in \WH{\Theta}_{|\Psih|}$ with $|\Psih|=100$}\label{fig:kl:100modes:l0}
\end{subfigure}
\begin{subfigure}[h]{0.3\textwidth} %
\includegraphicsifexists[trim={4cm 4cm 4cm 4cm},clip,width=\textwidth]{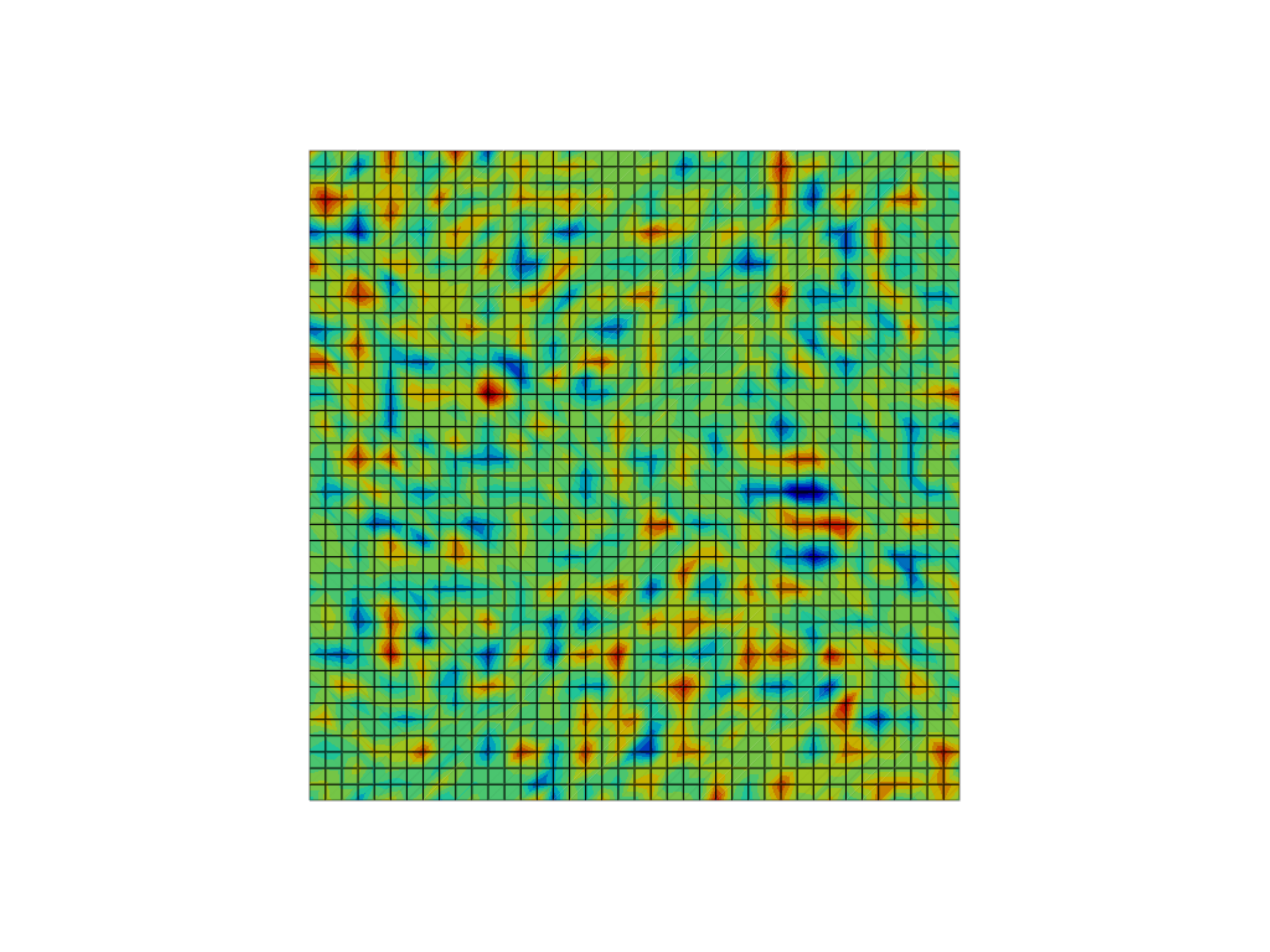} \caption{$\theta^\perp_1 \in \Theta_1 \backslash \WH{\Theta}_{100}$}\label{fig:kl:100modes:complement:l0}
\end{subfigure}
\begin{subfigure}[h]{0.3\textwidth} %
\includegraphicsifexists[trim={4cm 4cm 4cm 4cm},clip,width=\textwidth]{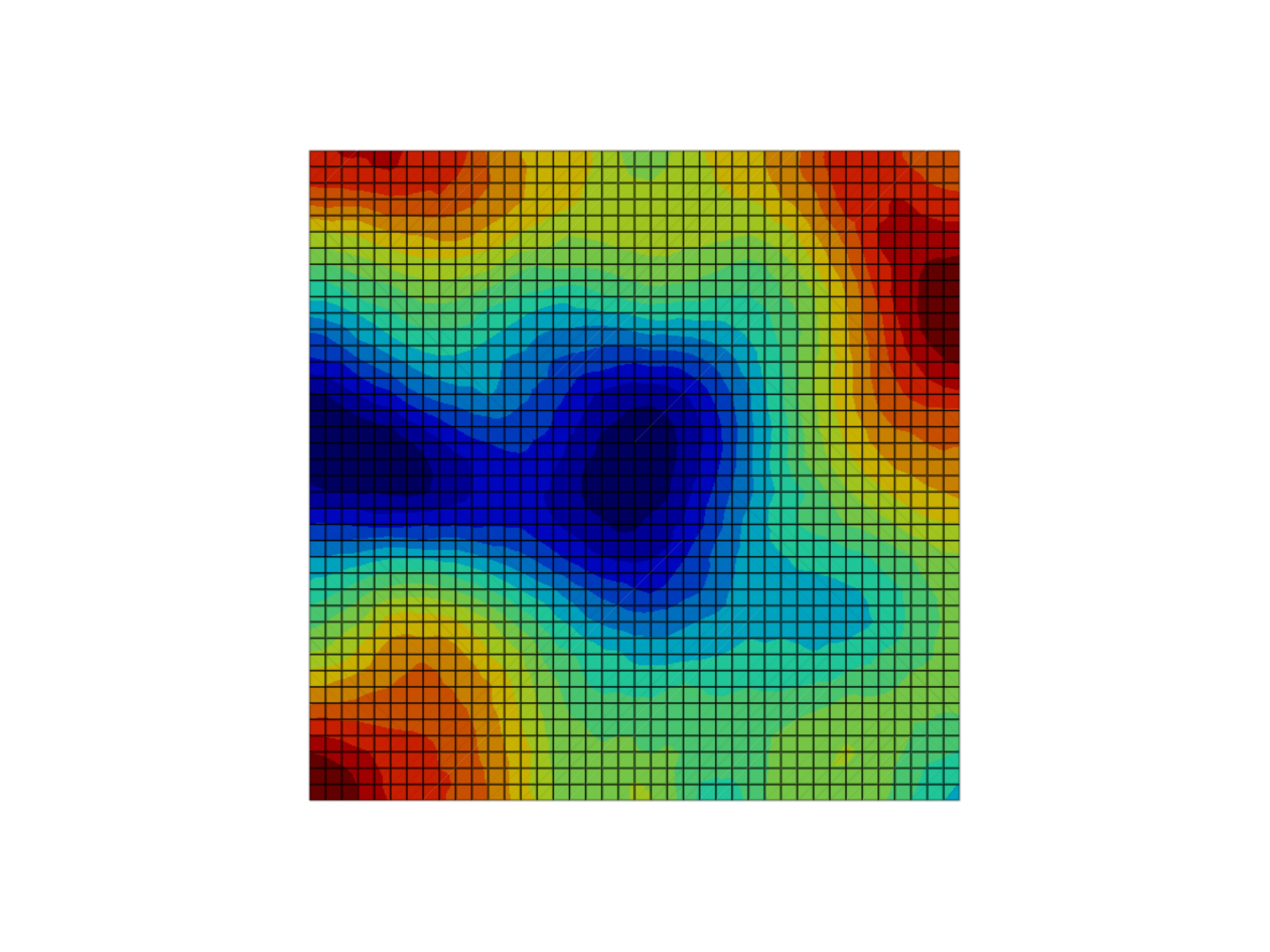} \caption{$\widetilde{\theta}_1 = \WH{\theta}^1_0 + \theta^\perp_1 \in \Theta_1$}\label{fig:kl:100modes:l1}
\end{subfigure}
\captionsetup{singlelinecheck=off,font=footnotesize}
\caption[]{A realization of a Gaussian random field across the levels of a two-level hierarchy sampled using the \KL and SPDE sampler on the coarse and fine level, respectively. on the  approach with: (a,d,g): a sample on the coarse level, (b,e,h): a fine-level sample in the space complement to $\WH{\Theta}_{\mathpzc{m}} = \mathrm{span}(\Psih)$, and (c,f,i): the complete multilevel realization of the GRF. $\WH{\Theta}_{\mathpzc{m}}$ represents the use of $\mathpzc{m}$ modes on the coarse level (i.e. $\mathpzc{m} = |\Psih|$).}
\label{fig:klspde:multilevelgrfs}
\end{figure}

\section{Bayesian Inference: Multi-Level Markov Chain Monte Carlo} \label{sec:MCMC}

The Bayesian framework provides a systematic approach to infer unknown model parameters by combining prior knowledge with observed data, $\yobs$. Starting from Bayes' theorem, several techniques, both deterministic (e.g., Laplace approximation, maximum likelihood estimation (MLE), maximum-a-posteriori (MAP) estimation \cite{Bassett2018}, and variational inference \cite{Blei2018}) and stochastic/sampling (e.g., Monte Carlo) based, have been developed to estimate or sample from the posterior distribution. To demonstrate our hierarchical sampling approach, we focus on Markov Chain Monte Carlo (MCMC) methods, which are widely used for Bayesian inference due to their ability to sample from complex, high-dimensional distributions. MCMC methods generate a Markov chain whose stationary distribution is the target posterior distribution. The chain is constructed by proposing new states based on the current state and accepting or rejecting these proposals based on a Metropolis-Hastings acceptance criterion. The chain converges to the target distribution asymptotically (i.e. as the number of samples approaches infinity). This work considers multilevel MCMC sampling which samples on the the coarser levels to inform sampling on the finer levels (often referred to as \emph{Delayed Acceptance}), while simultaneously improving and accelerating the convergence of multilevel estimation of statistical moments.

For multilevel MCMC sampling from a posterior distribution, we consider a hierarchy of maps $\cbrac{\mathcal{F}_\ell}_{\ell=0}^{L}$ with increasing fidelity. We employ a hierarchy of increasingly refined discretizations (i.e., mesh) of a single PDE, that naturally stem from multigrid methods. Hence, $\ell$ denotes the order of refinement, with levels $\ell=0,1,\hdots,L$, such that level $L$ is the finest level. Each $\mathcal{F}_\ell := \mathcal{D}_\ell \circ \mathcal{S}_\ell(\xi_\ell)$ maps the parameters being inferred $\xi$ to model output $\Vec{y}$ and is a composition of $\mathcal{S}_\ell: \xi_\ell \mapsto \theta_\ell$ corresponding to the sampler (i.e. KL and SPDE samplers) for generating GRFs from standard white noise, and $\mathcal{D}_\ell: \theta_\ell \mapsto \Vec{y}_\ell$ corresponding to the forward model (e.g. Darcy solver). For a given level $\ell$, the associated posterior is denoted as $\pi_\ell(\xi) \equiv \pi(\xi_\ell | \Vec{y}_{obs}) \propto \Lhood_\ell(\Vec{y}_{obs}|\xi_\ell) \nu_\ell(\xi_\ell)$, with prior $\nu_\ell(\xi_\ell)$ and likelihood $\Lhood_\ell(\yobs|\xi_\ell)$.

Our goal is to estimate the posterior mean of a scalar quantity of interest (QoI) $Q_{L}:=Q_L(\xi_L)$
\expression{
    \mathbb{E}_{\pi_L}[Q_L] = \integral{\Omega}{}{Q_L(\xi)\pi_L(\xi)~d\xi}
}
by means of Monte Carlo (MC) sampling by drawing samples, $\xi_L^{(i)}$, from the posterior using MCMC
\expression{
    \mathbb{E}_{\pi_L}[Q] \approx \overline{Q}_L = \dfrac{1}{N_L} \sum_{i=1}^{N_L} Q_L(\xi_L^{(i)}),
}
where, $\xi_L^{(i)} \sim \pi_L(\xi_L | \Vec{y}_{obs})$ is the $i^{th}$ independent sample and $N_L$ is the total number of samples on level $L$. The multilevel decomposition of this expectation is given as
\eq{ML:Expectation}{
    \mathbb{E}_{\pi_L}[Q_L] = \mathbb{E}_{\pi_0}[Q_0] + \sum_{\ell=1}^{L} \rbrac{\mathbb{E}_{\pi_{\ell}}[Q_{\ell}] - \mathbb{E}_{\pi_{\ell-1}}[Q_{\ell-1}]}
}
where, we use the estimator $\widehat{Y}_\ell^{N_\ell}$ for the difference $\mathbb{E}_{\pi_\ell}[Q_\ell] - \mathbb{E}_{\pi_{\ell-1}}[Q_{\ell-1}]$, defined as
\eq{ML:Y}{
    \widehat{Y}_\ell^{N_\ell} = \dfrac{1}{N_\ell} \sum_{i=1}^{N_\ell} Y^{(i)}_\ell = \dfrac{1}{N_\ell} \sum_{i=1}^{N_\ell} \rbrac{Q^{(i)}_{\ell} - Q^{(i)}_{\ell-1}}.
}
Multilevel Monte Carlo and Multilevel MCMC obtain variance reduction by sampling each $Y_\ell^{(i)}= Q^{(i)}_{\ell} - Q^{(i)}_{\ell-1}$ from a joint distribution, with expectation and variance denoted as $\mathbb{E}_{\pi_\ell,\pi_{\ell-1}}[Y_\ell]$ and $\Var[\pi_\ell,\pi_{\ell-1}]{Y_\ell}$, respectively. Previous work by Dodwell \etal~\cite{Dodwell2015,Dodwell2019} shows empirical evidence of this variance decay; specifically, they prove the multilevel acceptance probability (which is later referred to as $\alpha$) tends to one with mesh refinement, and consequently, the variance of $Y_\ell$ tends to zero (as $\ell \rightarrow \infty$). As described in Section~\ref{sec:mldecomposition}, we obtain these samples via the hierarchical PDE sampler approach. 

The decay in estimate of $\Var[\pi_\ell,\pi_{\ell-1}]{Y_\ell}$ with level refinement directly affects the sampling cost, with faster decay requiring fewer simulations on the finer levels and faster convergence. For a given MSE tolerance $\epsilon^2$, the optimal number of samples on each level, $N_\ell$, is calculated as
\eq{EffectiveSampleSize}{
    N_\ell = \dfrac{2}{\epsilon^2} \rbrac{\sum_{k=0}^L \sqrt{\Var[\pi_k,\pi_{k-1}]{Y_k} C^{\eff}_k}} \sqrt{\dfrac{\Var[\pi_\ell,\pi_{\ell-1}]{Y_\ell}}{C^{\eff}_\ell}},
}
where, $C^{\eff}_\ell$ for $\ell=0,1,\hdots L$ is the computational cost of generating an independent sample of $Y_\ell$ and is given as
\eq{EffecCost}{
    C^{\eff}_\ell := \ceil*{\tau_\ell} \rbrac{C_\ell + \ceil*{\tau_{\ell-1}} C_{\ell-1}}
}
with an integrated autocorrelation time (IACT), $\tau_\ell$, on level $\ell$. The IACT is the number of samples needed to generate an independent sample and is defined as
\eq{IACT}{
    \tau = 1 + 2 \sum_{\chi=1}^M \rho_Q(\chi),
}
where, the normalized autocorrelation function is estimated as
\eq{NormAutoCorr}{
    \rho_Q(\chi) = \dfrac{1}{N-\chi} \sum_{i=1}^M \dfrac{\rbrac{Q^{(i)} - \mu_Q} \rbrac{Q^{(i+\chi)} - \mu_Q}}{\sigma^2_Q}
}
with mean $\mu_Q$ and variance $\sigma_Q$ of $\cbrac{Q^{(i)}}_{i=1}^N$ and $M \ll N$. For details on this variance-cost relation, we refer the reader to \cite[Appendix A.2]{Reddy2025:MLMCMC}. 

The posterior is sampled using a Multi-Level Delayed Acceptance (MLDA) algorithm (Alg. \ref{Alg:HMCMC}) and we generate proposals $\xi_\ell^P$ on level $\ell$, conditioned on the current state of the chain $\xi_\ell^C$, using preconditioned Crank-Nicolson (pCN)
\expression{
    \xi_\ell^P := \sqrt{1-\beta^2} \xi_\ell^C + \beta \breve{\xi}_\ell,
}
where $\breve{\xi}_\ell \sim \MC{N}(0,I)$ is the standard white noise and $\xi_\ell^P \sim q(\xi|\xi_\ell^C) = \mathcal{N}(\sqrt{1-\beta^2} \xi_\ell^C, \beta^2 I)$. Then, the multilevel realization of $\MC{W}$ on level $\ell$, $\widetilde{\MC{W}}_\ell$, conditioned on the coarse level $\xi_{\ell-1}$, is computed using \eqref{hpde:decomp} and \eqref{klspde:decomp:alternative}.

\begin{remark}\normalfont
    The preconditioned Crank-Nicolson proposal $q(\xi^P|\xi^C) \sim \mathcal{N}(\sqrt{1-\beta^2} \xi_\ell^C, \beta^2 \mathcal{C})$ with a prior $\nu \sim \mathcal{N}(0,\mathcal{C})$ satisfies $\nu(\xi^C)q(\xi^P|\xi^C) = \nu(\xi^P)q(\xi^C|\xi^P)$.
\end{remark}

\begin{algorithm}[h]
    \caption{MLDA Algorithm: Generate a single sample on the fine level $L$ conditioned on the coarse level $\ell = L-1$.} \label{Alg:HMCMC}
    \algorithmicrequire{~Current state: $\xi_\ell^C$, $\widetilde{\xi}^C_\ell$, $\xi_L^C$, $\widetilde{\xi}^C_L$} \\
    \algorithmicensure{~Next state: $\xi_L^\star$, $\xi_L^C$}
    \begin{algorithmic}
    \State Generate $\xi_\ell^\star$ on the coarse level using MLDA using $\xi_\ell^C$ and $\widetilde{\xi}^C_\ell$ \Comment{Generate sample on the coarse level}
    \State $\xi_L^P \sim q(\xi_L|\xi_L^C)$ \Comment{Generate fine-level proposal using pCN} 
    \State Compute $\widetilde{\xi}_L$ using \eqref{hpde:decomp} or \eqref{klspde:decomp:alternative} with $\xi_L^P$ and $\xi_\ell^\star$ \Comment{Generate multilevel realization of noise}
    \State $\alpha = \min\cbrac{1,\dfrac{\Lhood_L(\yobs|\widetilde{\xi}^P_L) \Lhood_\ell(\yobs|\xi_\ell^C)}{\Lhood_L(\yobs|\xi_L^C)\Lhood_\ell(\yobs|\xi_\ell^\star)}}$ \Comment{Compute multilevel acceptance ratio} \\
    \If{$\alpha \geq u \sim \mathcal{U}(0,1)$} \Comment{Accept proposed sample}
        \State $\xi_L^\star = \widetilde{\xi}_L$  \Comment{Update the multilevel state up to current level}
        \State $\xi_L^C = \xi_L^P$ \Comment{Update the state on current level}        
    \Else \Comment{Reject the proposed sample}
        \State $\xi_L^\star = \widetilde{\xi}^C_L$
    \EndIf        
    \end{algorithmic}    
\end{algorithm}

\begin{remark}\normalfont
    On the coarsest level $\ell=0$, Alg. \ref{Alg:HMCMC} reduces to a standard, single-level Metropolis-Hastings algorithm with $\widetilde{\xi}_0 \sim \MC{N}(0,I)$ and
    \[\alpha = \min\cbrac{1,\dfrac{\Lhood_0(\yobs|\xi_0^P)}{\Lhood_0(\yobs|\xi_0^C)}}.\]
    Furthermore, when the coarsest level is sampled using the SPDE approach, then $\xi_0 \in \R^{\mathpzc{n_0}}$, where $\mathpzc{n_\ell}$ is the number of degrees of freedom (DOFs) on level $\ell$. When the coarsest level is sampled using the \KL approach, then $\xi_0 \in \R^{\mathpzc{m_0}}$, where $\mathpzc{m_\ell}$ is the number of \KL modes on level $\ell$.
\end{remark}

\section{Subsurface Flow: Darcy's Equations} \label{sec:Darcy}

We demonstrate our MLMCMC approach on a typical problem in groundwater flow described by Darcy's Law, which has been employed extensively in verifying the efficiency of delayed acceptance MCMC algorithms \cite{Dodwell2015,Dodwell2019,Fairbanks2021,Teckentrup13,Lykkegaard2023,lykkegaard2021accelerating,Cliffe2011}. The governing equations with the appropriate boundary conditions are written as
\subeqs{GovEq}{
\begin{align}
    \spliteq{Darcy}{    
        \dfrac{1}{k(\Vec{x})}\Vec{u}(\Vec{x}) + \Grad{p(\Vec{x})} = \Vec{f}& \quad \mathrm{in~} \Omega 
    }\\
    \spliteq{Mass}{
        \Div{\Vec{u}(\Vec{x})} = 0& \quad \mathrm{in~} \Omega
    }\\
    \spliteq{BC:P}{
        p = p_s & \quad \mathrm{on~} \Gamma_D
    }\\
    \spliteq{BC:NoFlux}{
    \Vec{u}\cdot\Vec{n} = 0 & \quad \mathrm{on~} \Gamma_N
    }
\end{align}
}
with $\Gamma := \Gamma_D \bigcup \Gamma_N = \partial \Omega$ where \eqref{Mass} is the incompressibility condition. The permeability coefficient is assumed to follow log-normal distribution $k(\Vec{x}) = \exp(\theta(\Vec{x}))$ where $\theta(\Vec{x})$ is a realization of a mean-zero GRF with a covariance kernel defined by \eqref{matern}.
We solve the Darcy equation using the mixed finite element method and consider the function spaces
\expression{
    \Vec{R} = H(\mathrm{div},\Omega) := \cbrac{\Vec{u} \in \Vec{L}^2(\Omega)~|~\mathrm{div}~\Vec{u} \in L^2(\Omega),\Vec{u}\cdot \Vec{n} = 0~\mathrm{on}~\Gamma}
}
\expression{
    \Phi = L^2(\Omega)
}
where $\Vec{L}^2(\Omega) = [L^2(\Omega)]^d$ is the $d$-dimensional vector function space.  Denote by $\Omega_h := \bigcup_{i=1}^M K_i$ a partition of $\Omega$ into a finite collection of $M$ non-overlapping elements $K_i$ with $h := \underset{j=1,\ldots,M}{\max} diam(K_j)$.
We take $\Vec{R}_h  \in H(\mathrm{div},\Omega_h) \subset \Vec{R}$ and $\Phi_h  \in  L^2(\Omega_h) \subset \Phi$ to be the lowest order Raviart-Thomas and piecewise constant basis functions, respectively. We consider $\Vec{u} \in \Vec{R}$ and $p \in \Phi$ and denote their finite element representations by $\Vec{u}_h \in \Vec{R}_h$ and $p_h \in \Phi_h$. Introducing test functions $\Vec{v}_h \in \Vec{R}_h$ and $q_h \in \Phi_h$, the weak, mixed form of \eqref{GovEq} reads
\begin{problem}
    Find $\rbrac{\Vec{u}_h,p_h} \in \Vec{R}_h \times \Phi_h$ such that
    \aligneq{GovEq:Mixed}{
            \rbrac{k^{-1} \Vec{u}_h,\Vec{v}_h} - \rbrac{\Div{\Vec{v}_h},p_h} &= \rbrac{\Vec{f}_h,\Vec{v}_h}, \quad &\forall \Vec{v}_h \in \Vec{R}_h \\
            \rbrac{\Div{\Vec{u}_h},q_h} &= 0, \quad &\forall q_h \in \Phi_h
    }
\end{problem}
The resulting saddle point system is then given by 
\eq{GovEq:Saddle}{
    \smatrix{ M_h(k) & B_h^T \\ B_h & \Vec{0} } \smatrix{ \Vec{u}_h \\ p_h } = \smatrix{ \rbrac{\Vec{f}_h,\Vec{v}_h} \\ \Vec{0} }.
}
As previously mentioned, we consider the random fields $\theta \in H^1(\Omega_h)$ but project them into $\Phi_h$ when computing the permeability coefficient, $k$. We take the quantity of interest (QoI) to be the integrated flux across $\Gamma_{out} \subset \Gamma$
\eq{MassFlux}{
    Q = \dfrac{1}{\abs{\Gamma_{out}}} \integral{\Gamma_{out}}{}{\Vec{u}\cdot \Vec{n} ~d\Gamma}.
}

\section{Results} \label{sec:results}

\subsection{Problem Formulation} \label{subsec:problem}

We demonstrate the proposed methodology on a synthetic subsurface flow problem by estimating, from noisy pressure observations, the permeability field and the posterior distribution of the mass flux through a boundary. We consider the domain $\Omega \subset \R^2$ to be a unit-square with boundary $\partial \Omega = \Gamma = \Gamma_{\mathrm{left}} \bigcup \Gamma_{\mathrm{top}} \bigcup \Gamma_{\mathrm{right}} \bigcup \Gamma_{\mathrm{bottom}}$ discretized using a structured quadrilateral mesh with different spatial resolution at each level in the hierarchy. We consider an (L+1)-level hierarchy with mesh sizes $h_\ell = 0.1\cdot 0.5^{\ell}$. We apply Dirichlet boundary conditions $p = -1~\mathrm{on}~\Gamma_{\mathrm{left}}$ and $p = 0~\mathrm{on}~\Gamma_{\mathrm{right}}$, and Neumann boundary conditions $\Vec{u}\cdot\Vec{n}=0 ~\mathrm{on}~\Gamma_{\mathrm{top}} \bigcup \Gamma_{\mathrm{bottom}}$.

The permeability field $k(\Vx)$ is a log-normal random field with zero-mean and covariance given by the Mat\'ern kernel. We take $\yobs \in \R^{100}$ to be the noise-perturbed pressure (i.e. $p(\theta_{obs}) + \epsilon$ with $\epsilon \sim \MC{N}(0,\sigma^2_\eta I)$) at uniformly sampled points in $\Omega_h$. We generate this synthetic observational data on a reference mesh with $h=0.5 h_L$ and assume that $ \mathcal{F}(\Vec{\xi}) \sim \mathcal{N}(\yobs,\sigma^2_\eta I)$ where $I$ is an identity matrix of appropriate dimension and take the noise to be $\sigma_\eta^2 = 0.01$; consequently, the likelihood function has the relation $\Lhood(\yobs|\Vec{\xi})\propto \exp (-\Vert \mathcal{F}(\Vec{\xi})-\yobs \Vert^2/2\sigma_\eta^2)$. The reference GRF and pressure field (without added noise) are shown in Figure \ref{fig:mcmc:obs}. A total of five independent chains are generated for each sampling approach and each chain is ran for 10k samples on the finest level. We consider a prior to be Gaussian random fields of Mat\'ern covariance with correlation length $\lambda = 0.3$ and a marginal variance $\sigma^2 = 0.1$. Here onwards, we refer to the hierarchy/sampling that uses the SPDE sampler on all levels as \emph{SPDE-hierarchy/sampling} and the hierarchy/sampling that uses KL sampler on the coarsest level and SPDE sampler on all finer levels as \emph{KL-SPDE-hierarchy/sampling}.

\begin{figure}[H] \centering
\begin{subfigure}[h]{0.35\textwidth} %
\includegraphicsifexists[trim={4.5cm 2cm 2.75cm 2cm},clip,width=\textwidth]{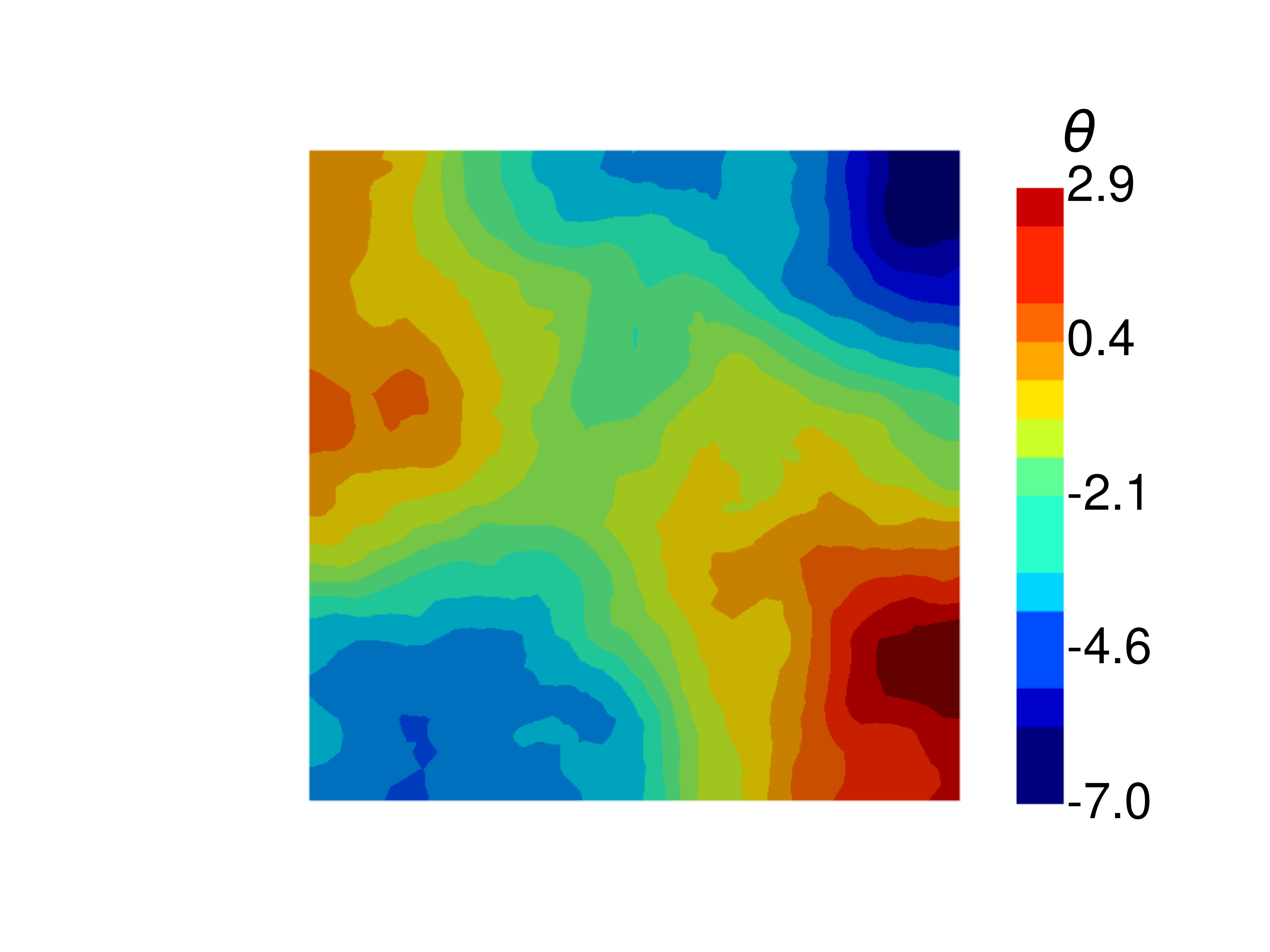} \caption{}\label{fig:mcmc:obs:grf}
\end{subfigure}
\begin{subfigure}[h]{0.35\textwidth} %
\includegraphicsifexists[trim={4.5cm 2cm 2.75cm 2cm},clip,width=\textwidth]{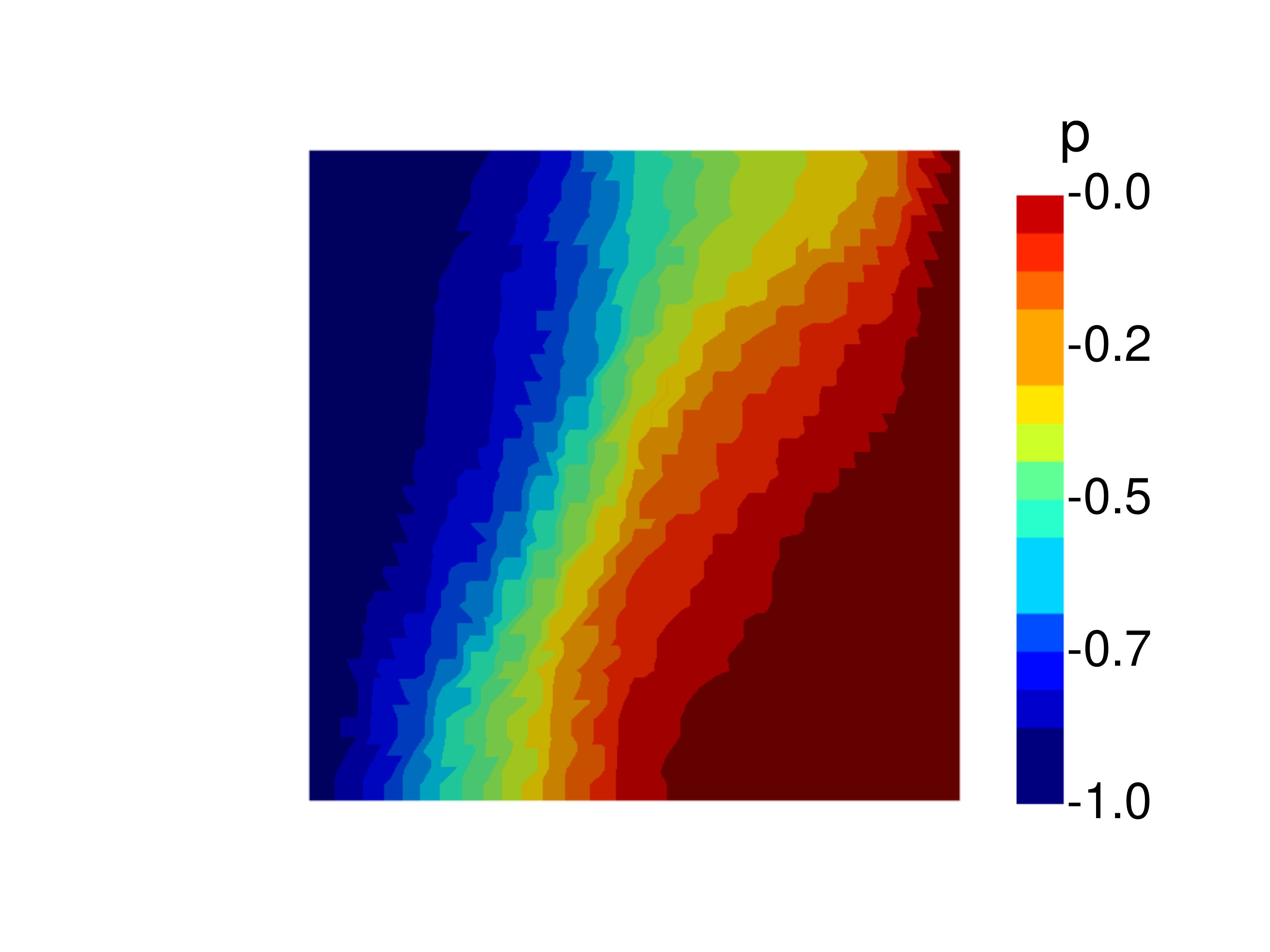} \caption{}\label{fig:mcmc:obs:p}
\end{subfigure}
\captionsetup{singlelinecheck=off,font=footnotesize}
\caption[]{The reference/observational (a) Gaussian random field and (b) the resulting pressure field.}
\label{fig:mcmc:obs}
\end{figure}

\subsection{Multilevel Bayesian Inference in Subsurface Flows} \label{subsec:bayesinf}

We compare the inferred mean GRFs and the corresponding pressure distributions obtained via MLMCMC using SPDE sampling and KL-SPDE sampling with varying number of coarse level modes and three levels in the hierarchy. Figure \ref{fig:mcmc:runs} shows that the mean GRF estimates and corresponding pressure fields on the fine level capture essential features of the \emph{observational/imposed}-GRF and the corresponding pressure field (Fig. \ref{fig:mcmc:obs}). It should be noted that the inference is performed using the perturbed/noisy pressure data, while the figures show the inferred mean pressure field (without noise). Furthermore, the insignificant differences between the fields on level $\ell=1$ and level $\ell=2$ indicate that a two-level hierarchy is sufficient to capture the essential features of the random field. On the coarse level, employing only 10 Karhunen–Loève (KL) modes is sufficient to capture the dominant features of the random fields, substantially reducing the dimensionality of the problem. This dimensionality reduction not only enhances computational efficiency but also improves the overall performance of the MCMC sampler, as fewer modes lead to a lower-dimensional sample space to explore. Consequently, the MCMC algorithm can converge more rapidly, achieving reliable estimates for the GRF and pressure fields with significantly reduced sampling effort. While the coarsest level of the KL-SPDE hierarchy with 10 KL modes (Fig. \ref{fig:mcmc:mlklpde-10:grf-l0}) captures the large-scale features of the imposed field, the subsequent finer level (Fig. \ref{fig:mcmc:mlklpde-10:grf-l1}) is able to capture additional detailed structure in the complement space. As the number of modes on the coarse level increases, the estimated GRF characteristics tend to those obtained by SPDE-sampling. By leveraging this multilevel structure with an optimized number of KL modes, the methodology can efficiently balance accuracy and computational cost, making it a practical approach for high-dimensional uncertainty quantification in complex systems.
\begin{figure}[h] \centering
\begin{subfigure}[h]{0.24\textwidth} %
\includegraphicsifexists[trim={4.5cm 2cm 2.75cm 2cm},clip,width=\textwidth]{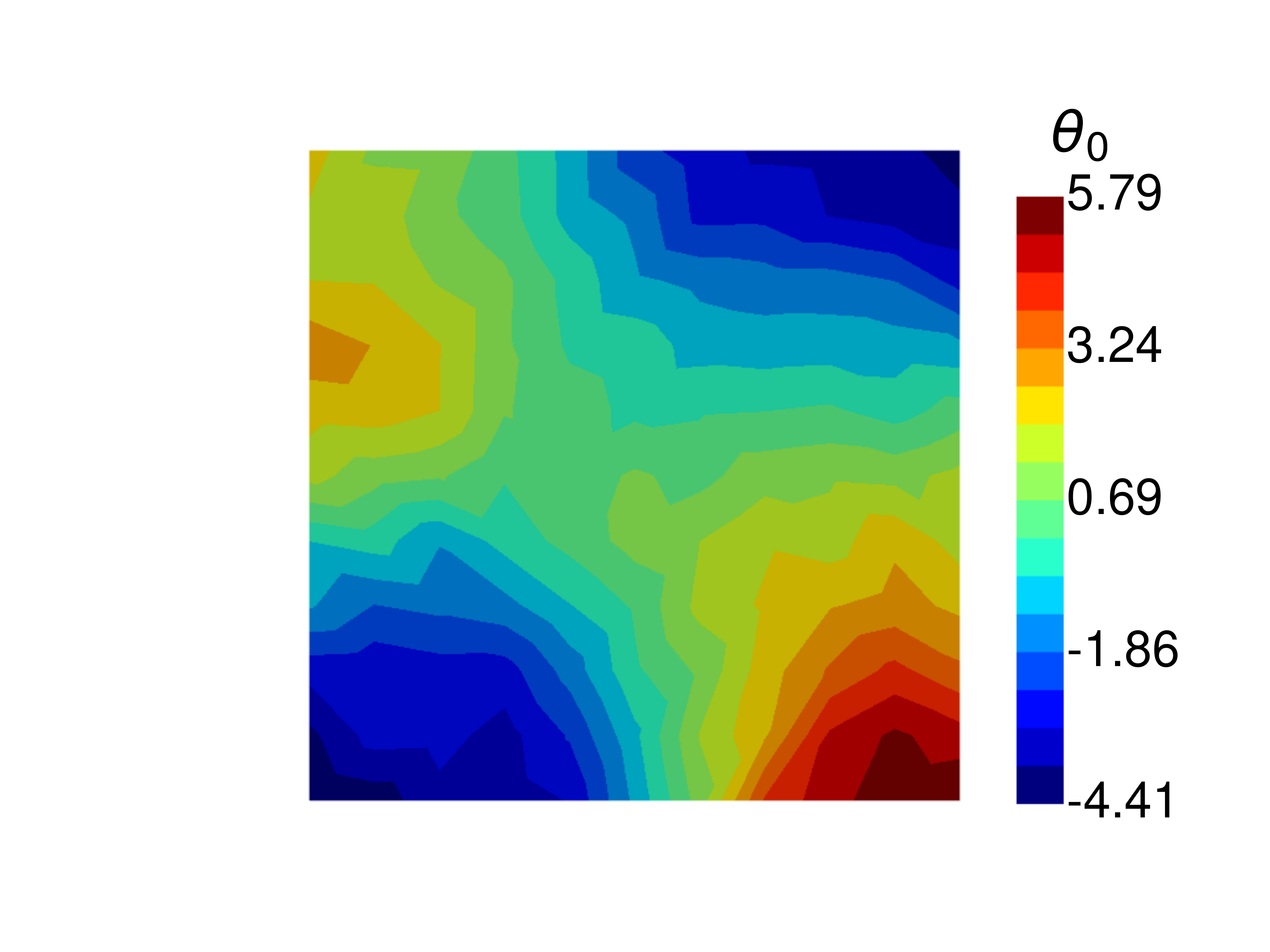} \caption{}\label{fig:mcmc:mlpde:grf-l0}
\end{subfigure}
\begin{subfigure}[h]{0.24\textwidth} %
\includegraphicsifexists[trim={4.5cm 2cm 2.75cm 2cm},clip,width=\textwidth]{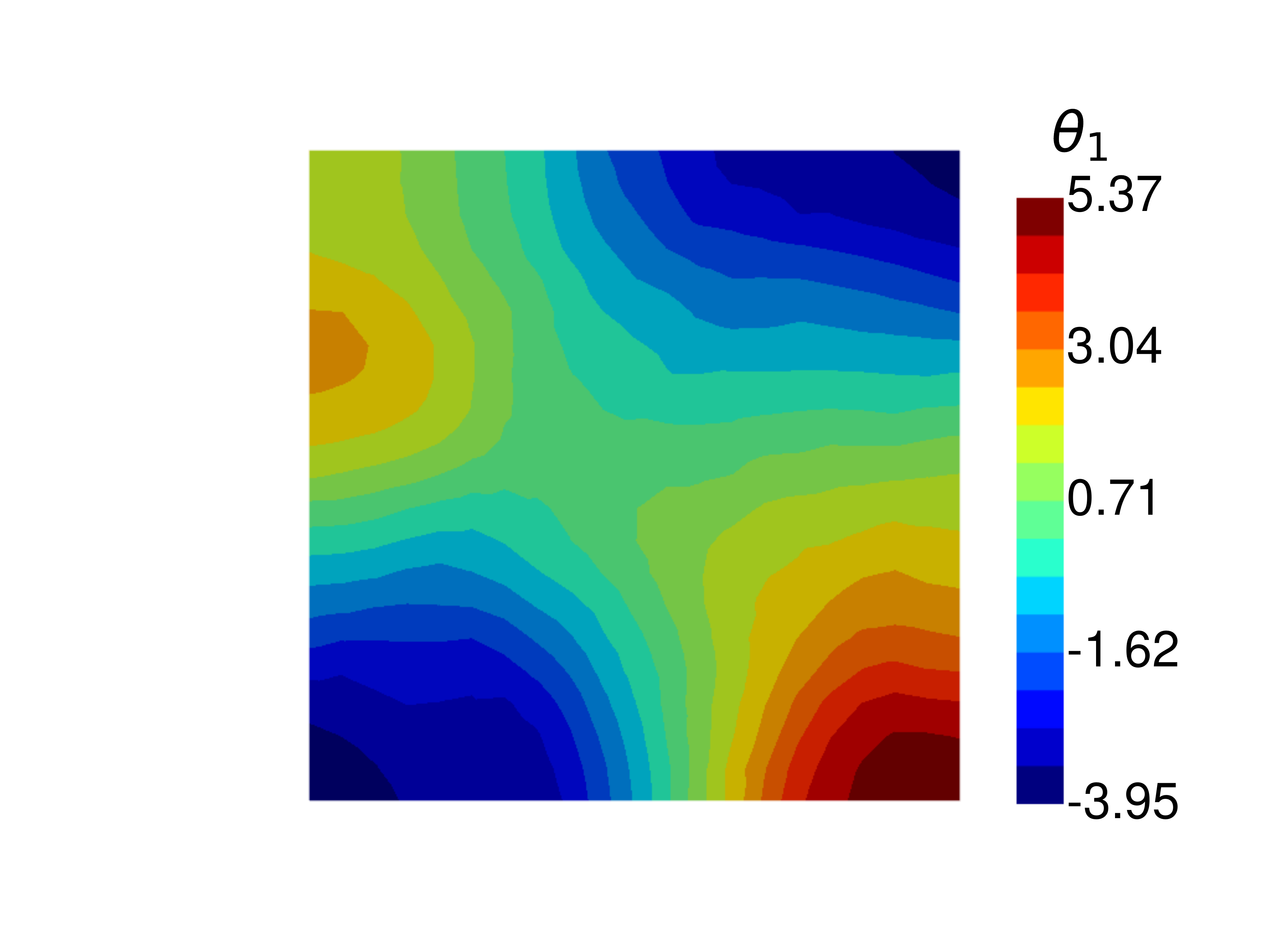} \caption{}\label{fig:mcmc:mlpde:grf-l1}
\end{subfigure}
\begin{subfigure}[h]{0.24\textwidth} %
\includegraphicsifexists[trim={4.5cm 2cm 2.75cm 2cm},clip,width=\textwidth]{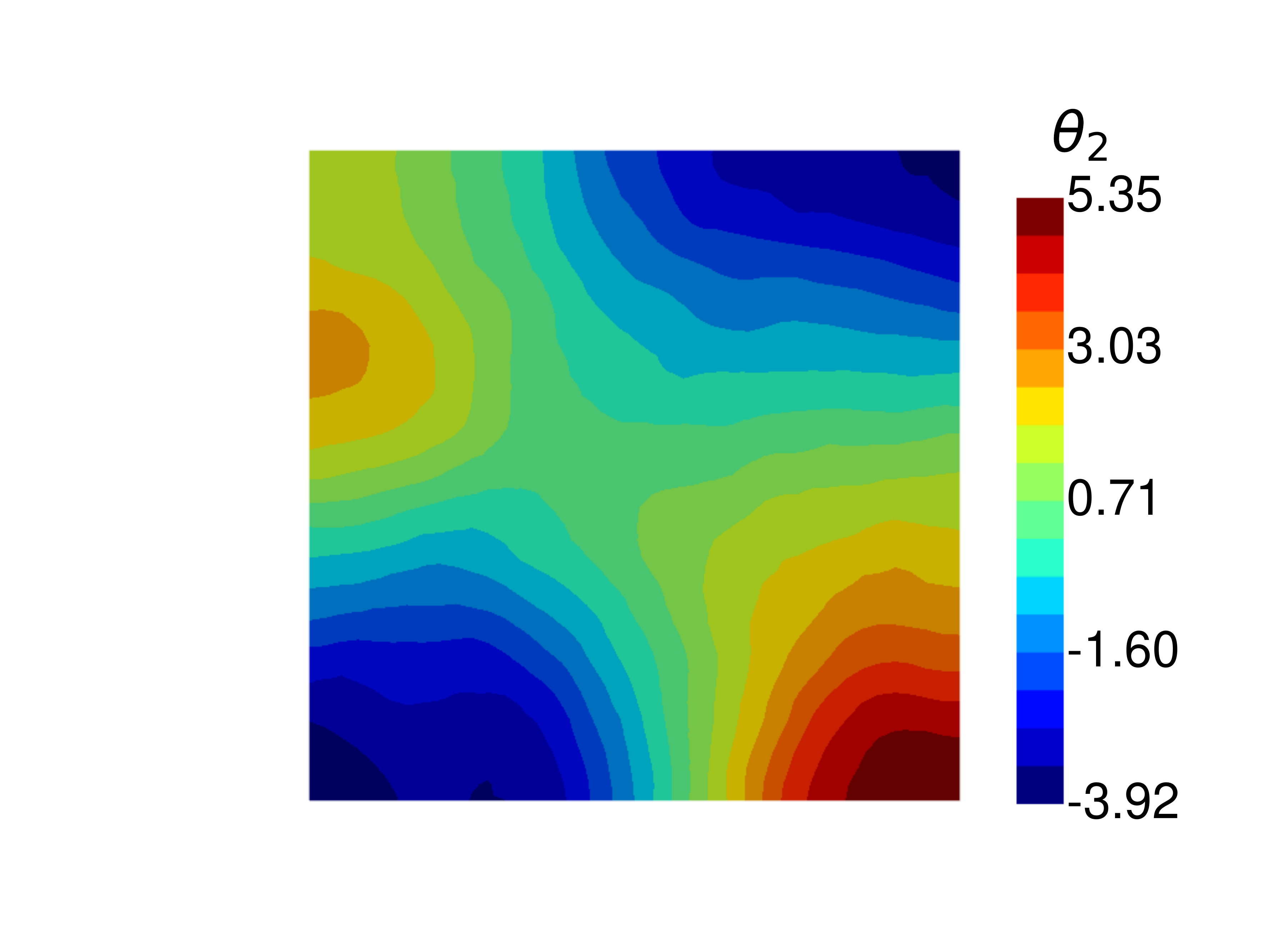} \caption{}\label{fig:mcmc:mlpde:grf-l2}
\end{subfigure}
\begin{subfigure}[h]{0.24\textwidth} %
\includegraphicsifexists[trim={4.5cm 2cm 2.75cm 2cm},clip,width=\textwidth]{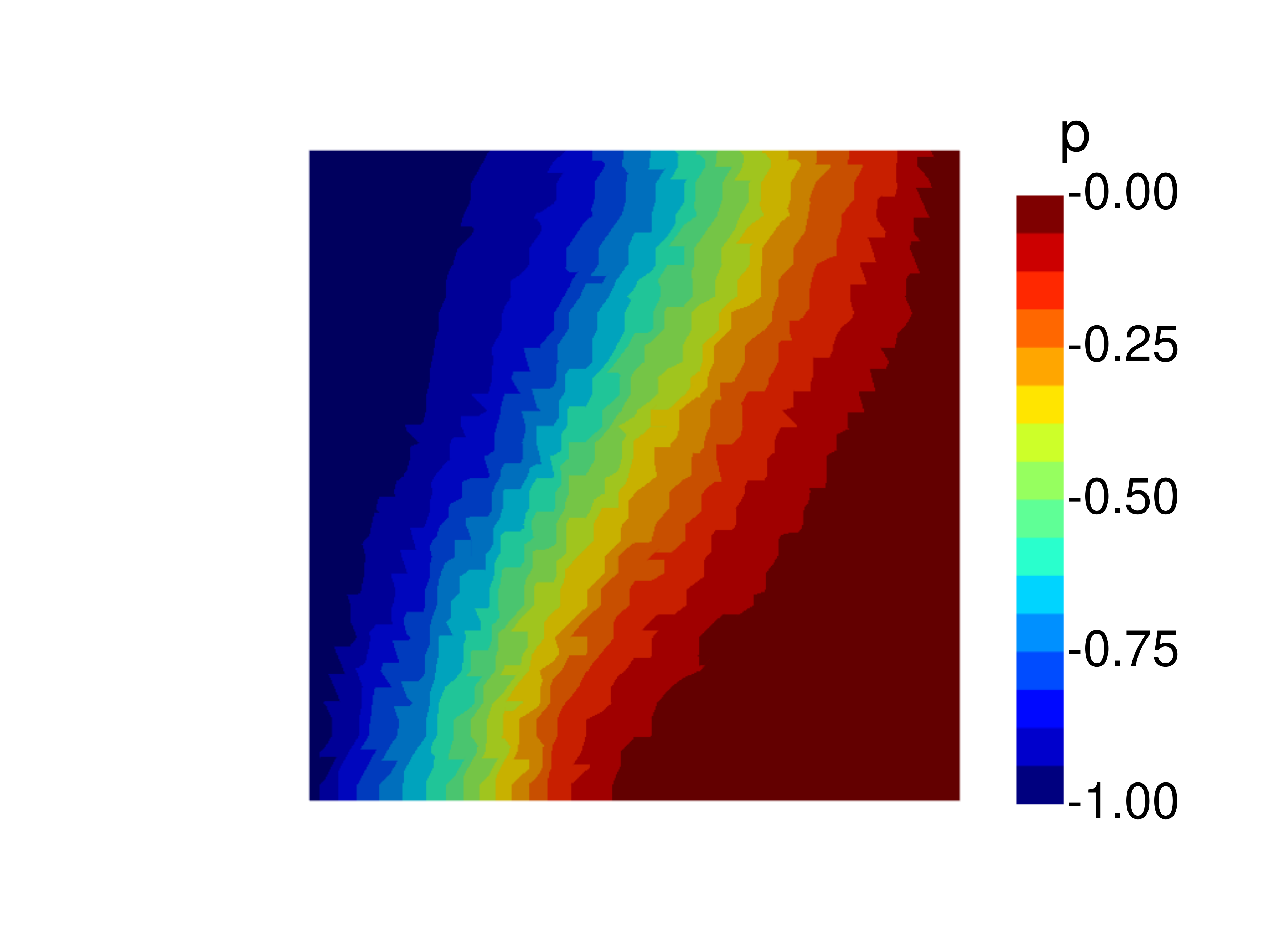} \caption{}\label{fig:mcmc:mlpde:p-l2}
\end{subfigure}
\begin{subfigure}[h]{0.24\textwidth} %
\includegraphicsifexists[trim={4.5cm 2cm 2.75cm 2cm},clip,width=\textwidth]{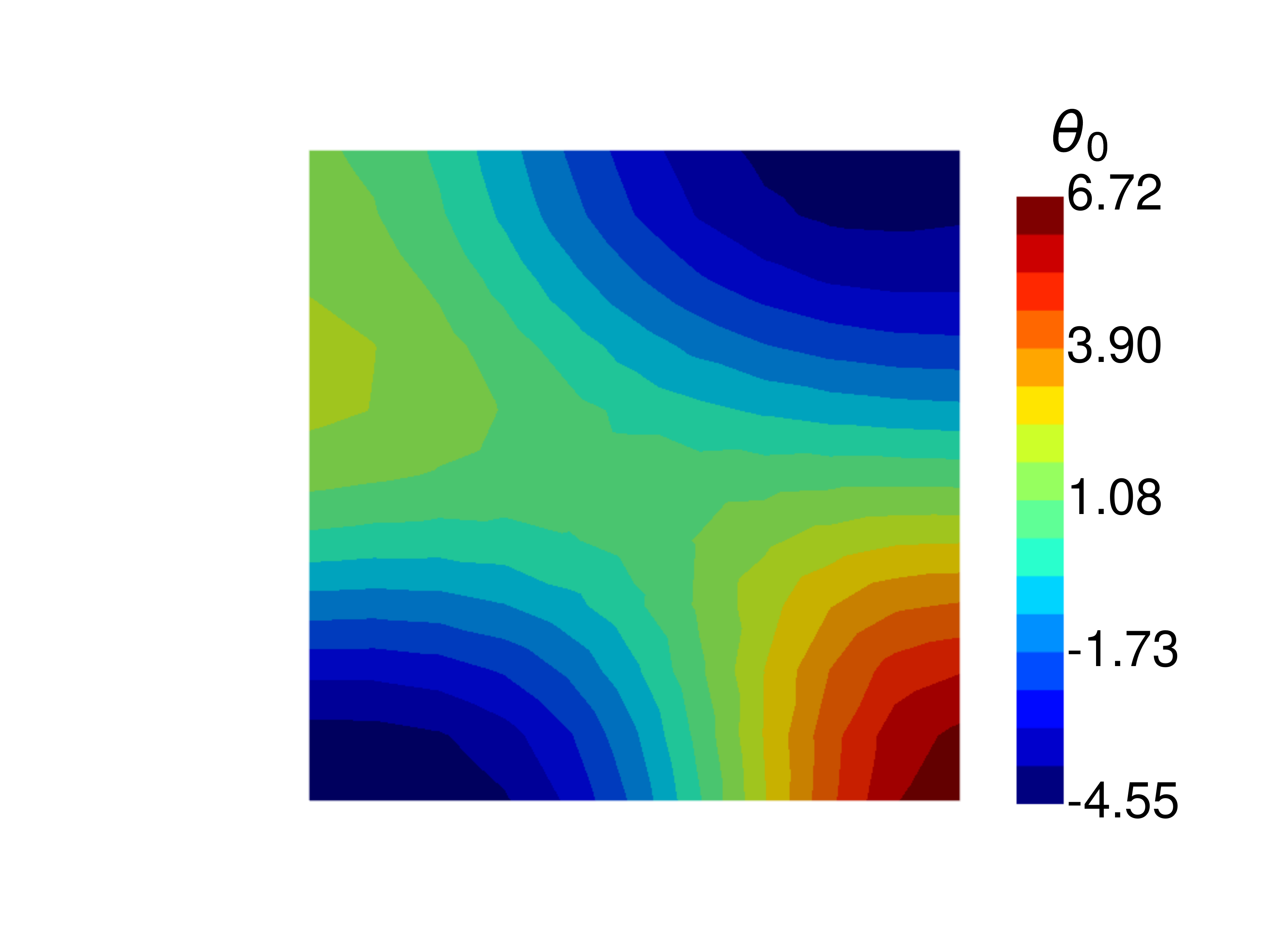} \caption{}\label{fig:mcmc:mlklpde-10:grf-l0}
\end{subfigure}
\begin{subfigure}[h]{0.24\textwidth} %
\includegraphicsifexists[trim={4.5cm 2cm 2.75cm 2cm},clip,width=\textwidth]{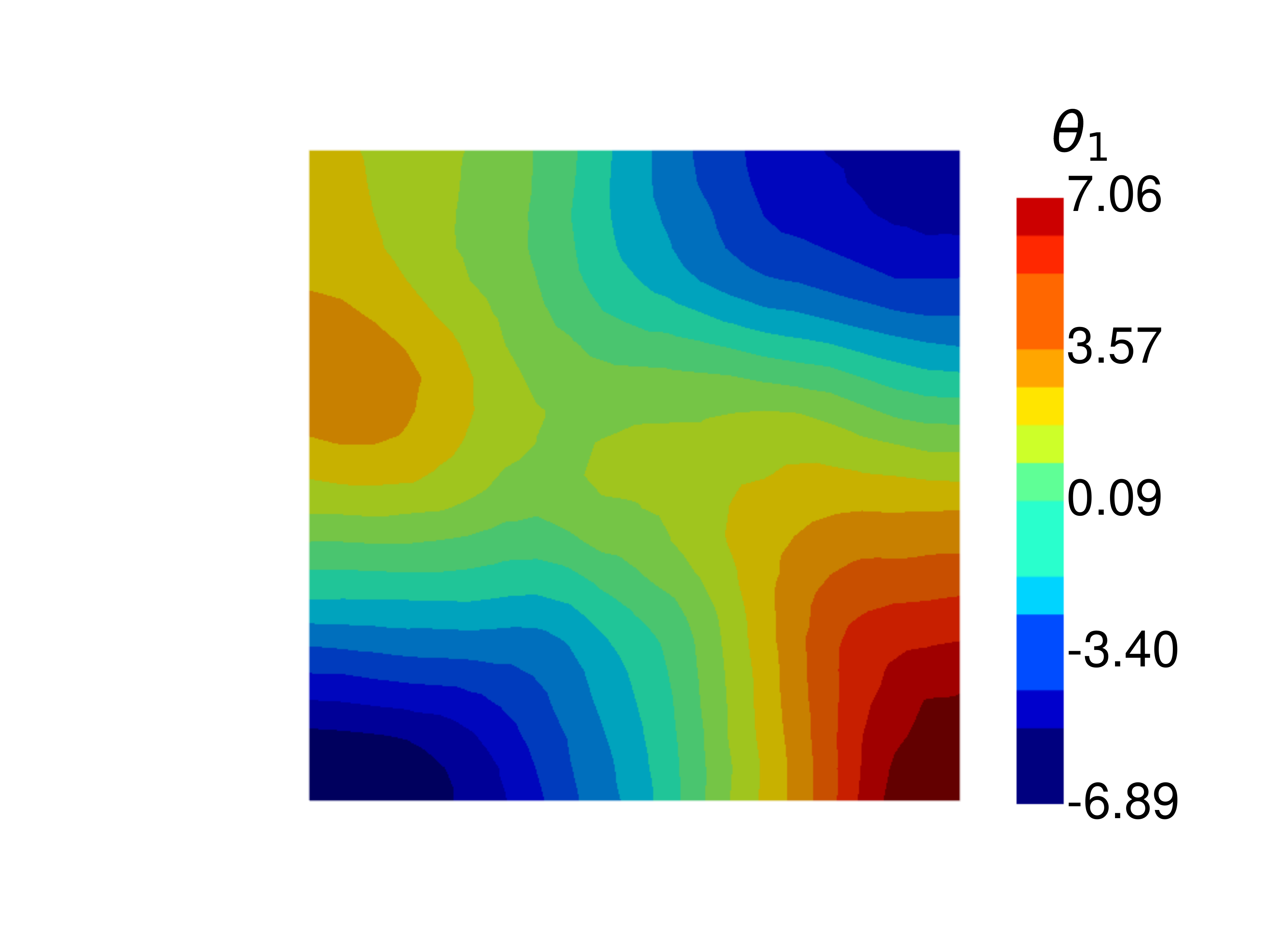} \caption{}\label{fig:mcmc:mlklpde-10:grf-l1}
\end{subfigure}
\begin{subfigure}[h]{0.24\textwidth} %
\includegraphicsifexists[trim={4.5cm 2cm 2.75cm 2cm},clip,width=\textwidth]{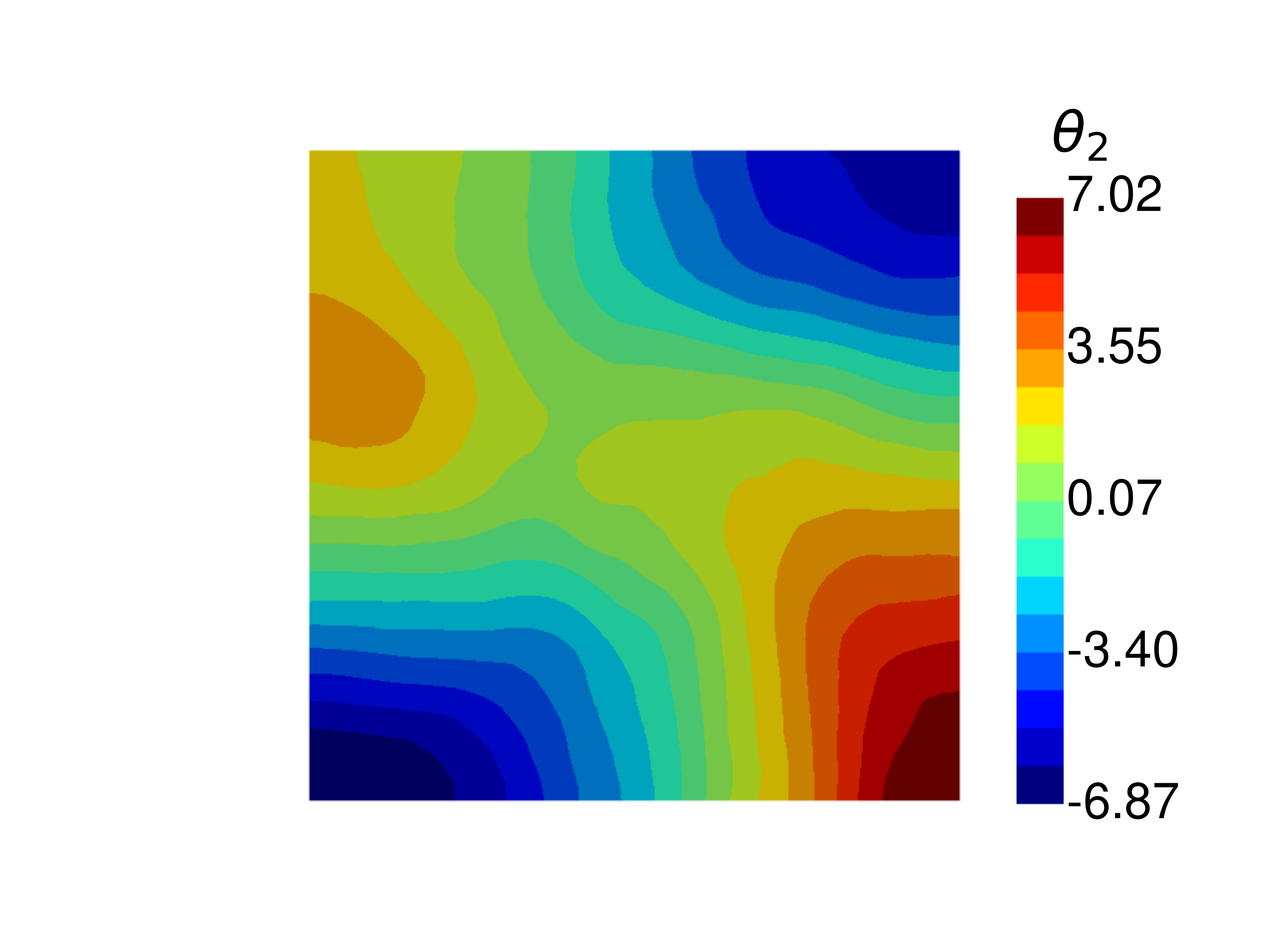} \caption{}\label{fig:mcmc:mlklpde-10:grf-l2}
\end{subfigure}
\begin{subfigure}[h]{0.24\textwidth} %
\includegraphicsifexists[trim={4.5cm 2cm 2.75cm 2cm},clip,width=\textwidth]{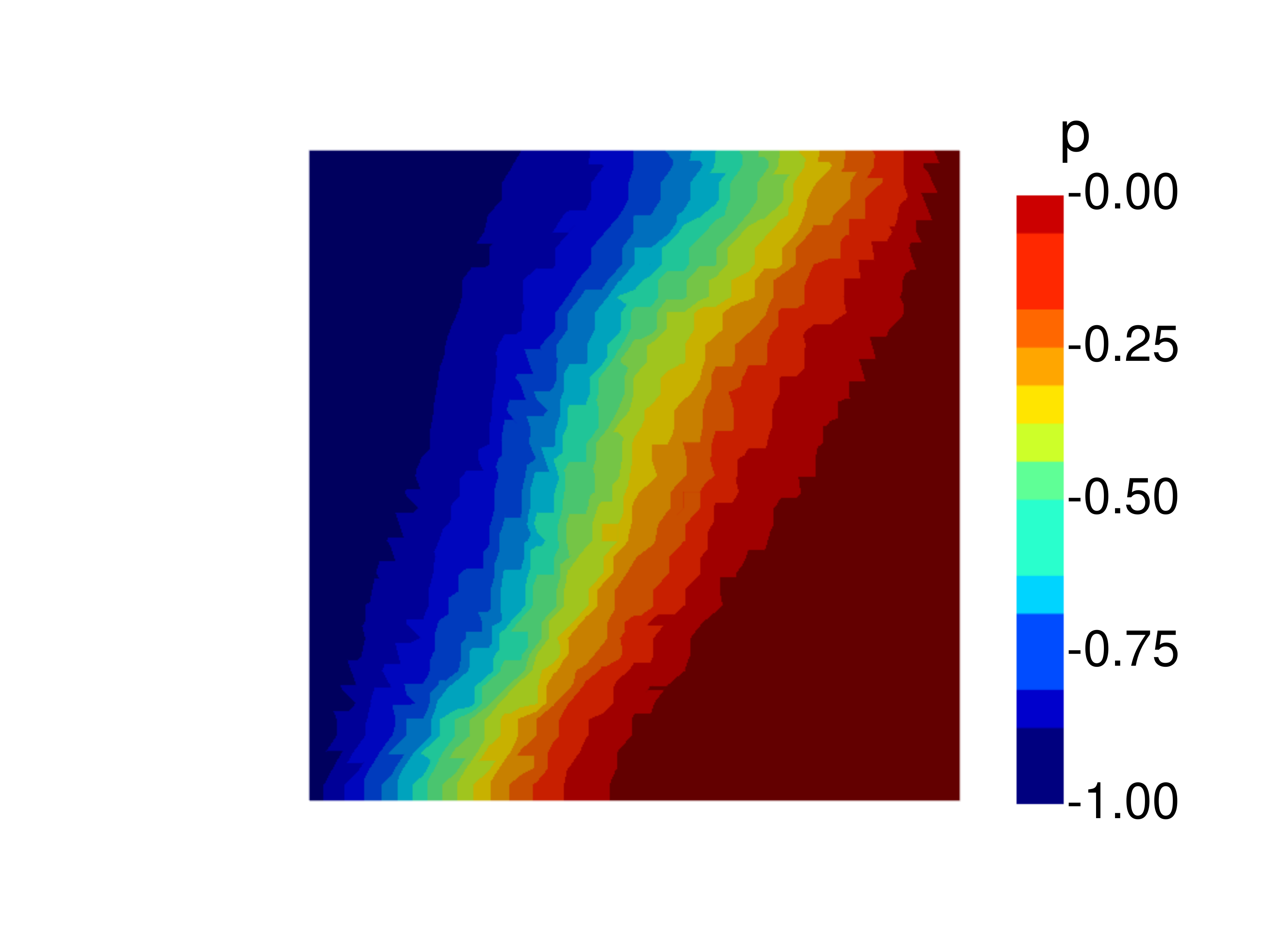} \caption{}\label{fig:mcmc:mlklpde-10:p-l2}
\end{subfigure}
\begin{subfigure}[h]{0.24\textwidth} %
\includegraphicsifexists[trim={4.5cm 2cm 2.75cm 2cm},clip,width=\textwidth]{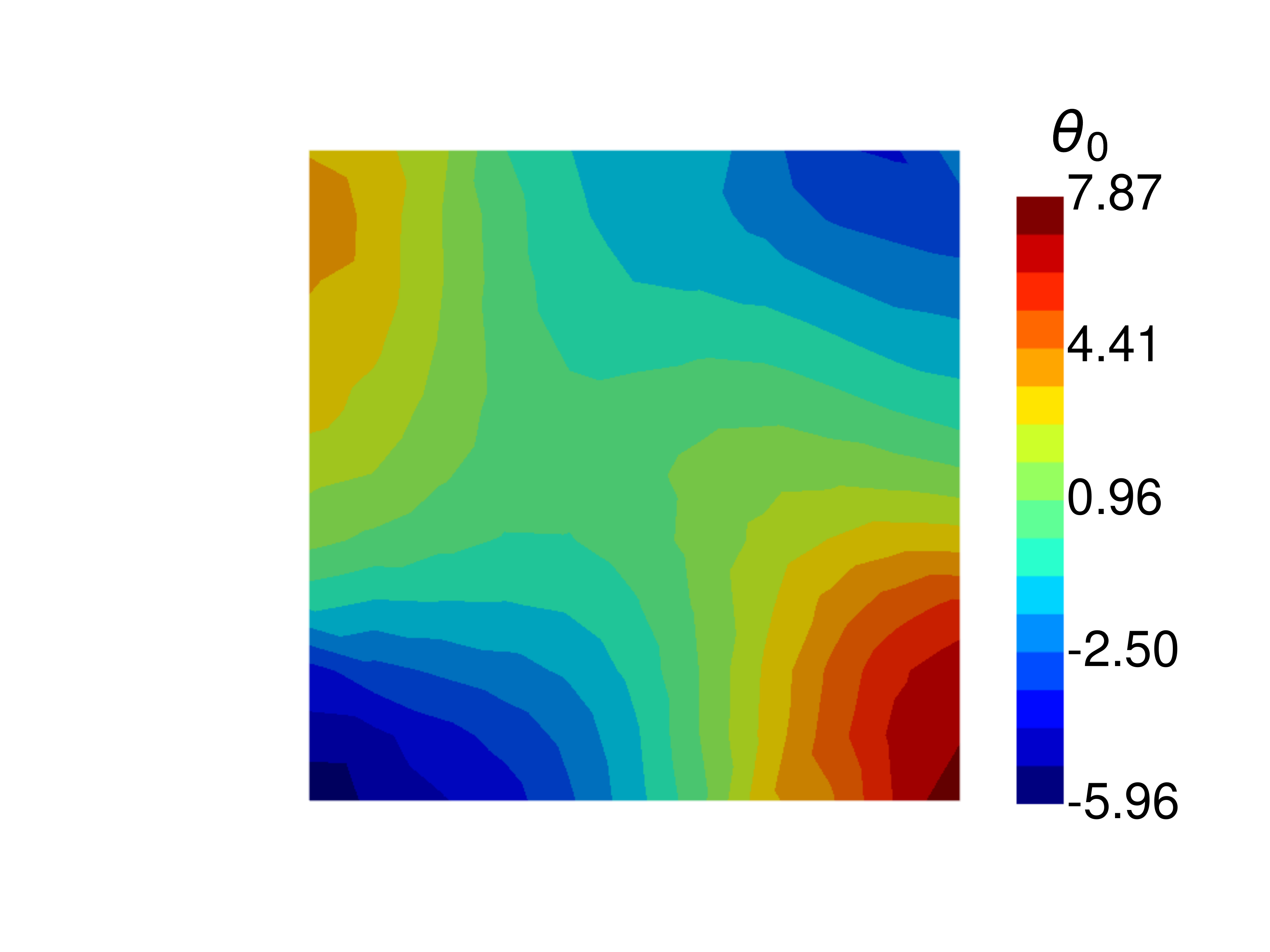} \caption{}\label{fig:mcmc:mlklpde-30:grf-l0}
\end{subfigure}
\begin{subfigure}[h]{0.24\textwidth} %
\includegraphicsifexists[trim={4.5cm 2cm 2.75cm 2cm},clip,width=\textwidth]{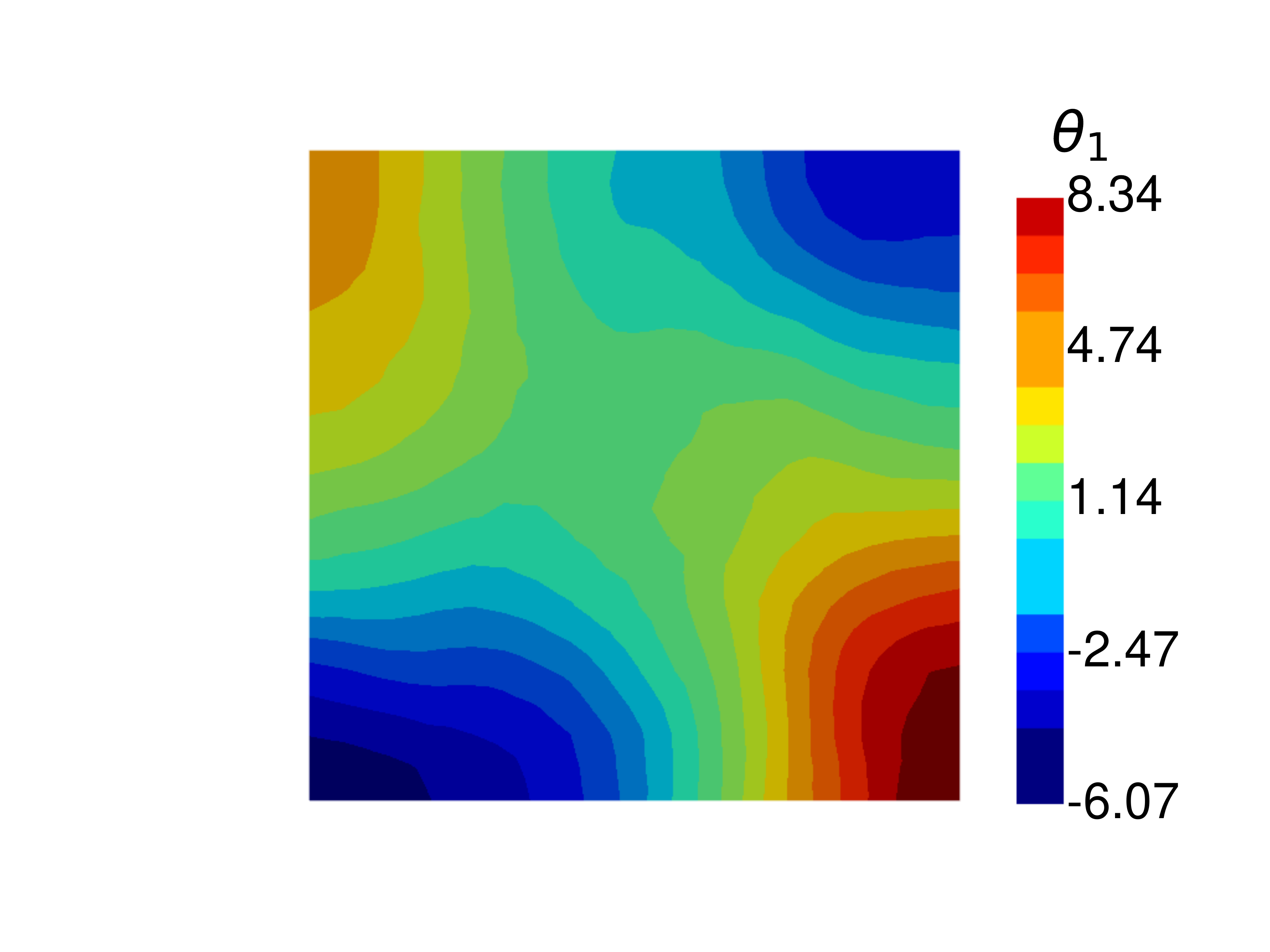} \caption{}\label{fig:mcmc:mlklpde-30:grf-l1}
\end{subfigure}
\begin{subfigure}[h]{0.24\textwidth} %
\includegraphicsifexists[trim={4.5cm 2cm 2.75cm 2cm},clip,width=\textwidth]{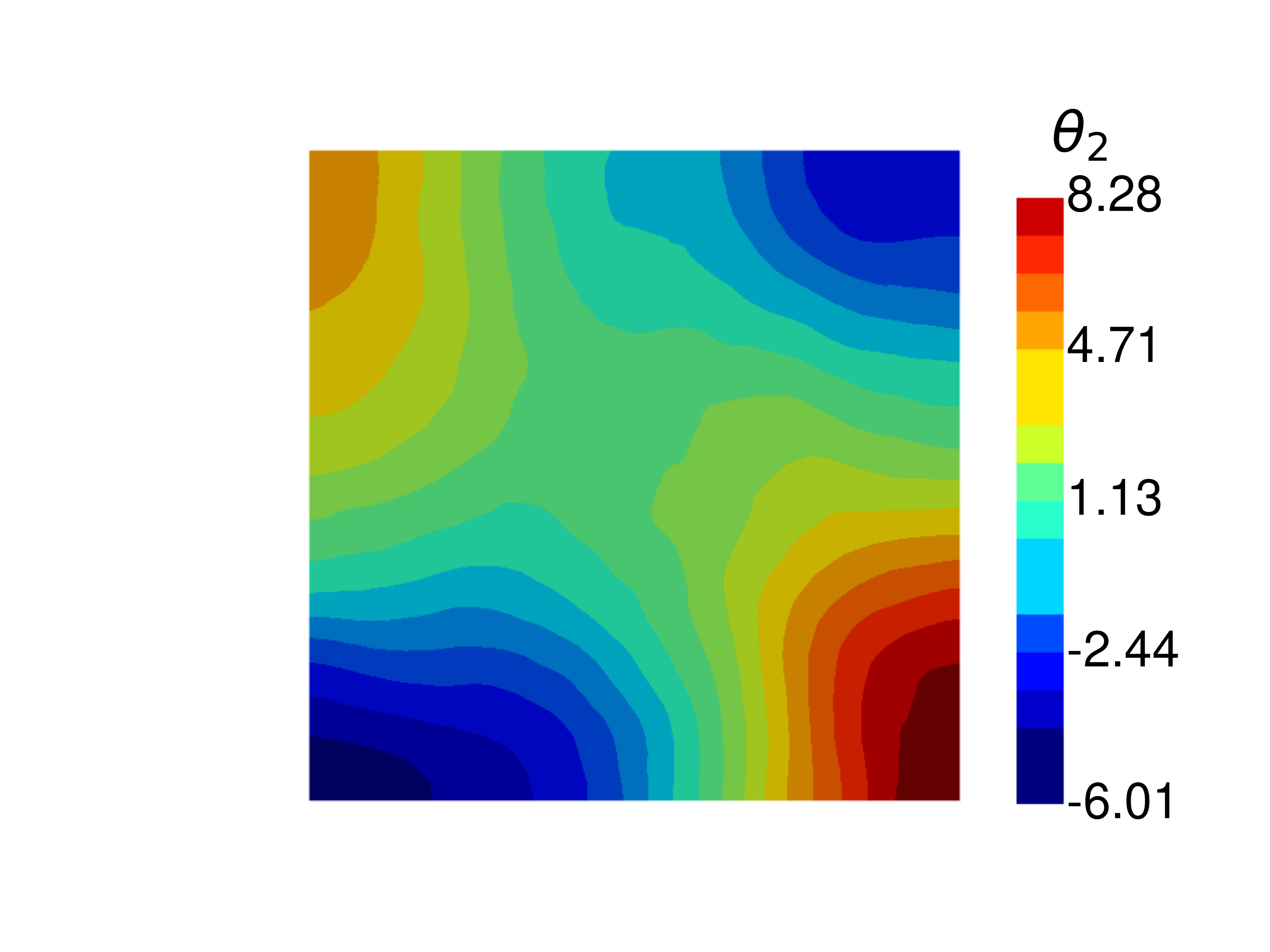} \caption{}\label{fig:mcmc:mlklpde-30:grf-l2}
\end{subfigure}
\begin{subfigure}[h]{0.24\textwidth} %
\includegraphicsifexists[trim={4.5cm 2cm 2.75cm 2cm},clip,width=\textwidth]{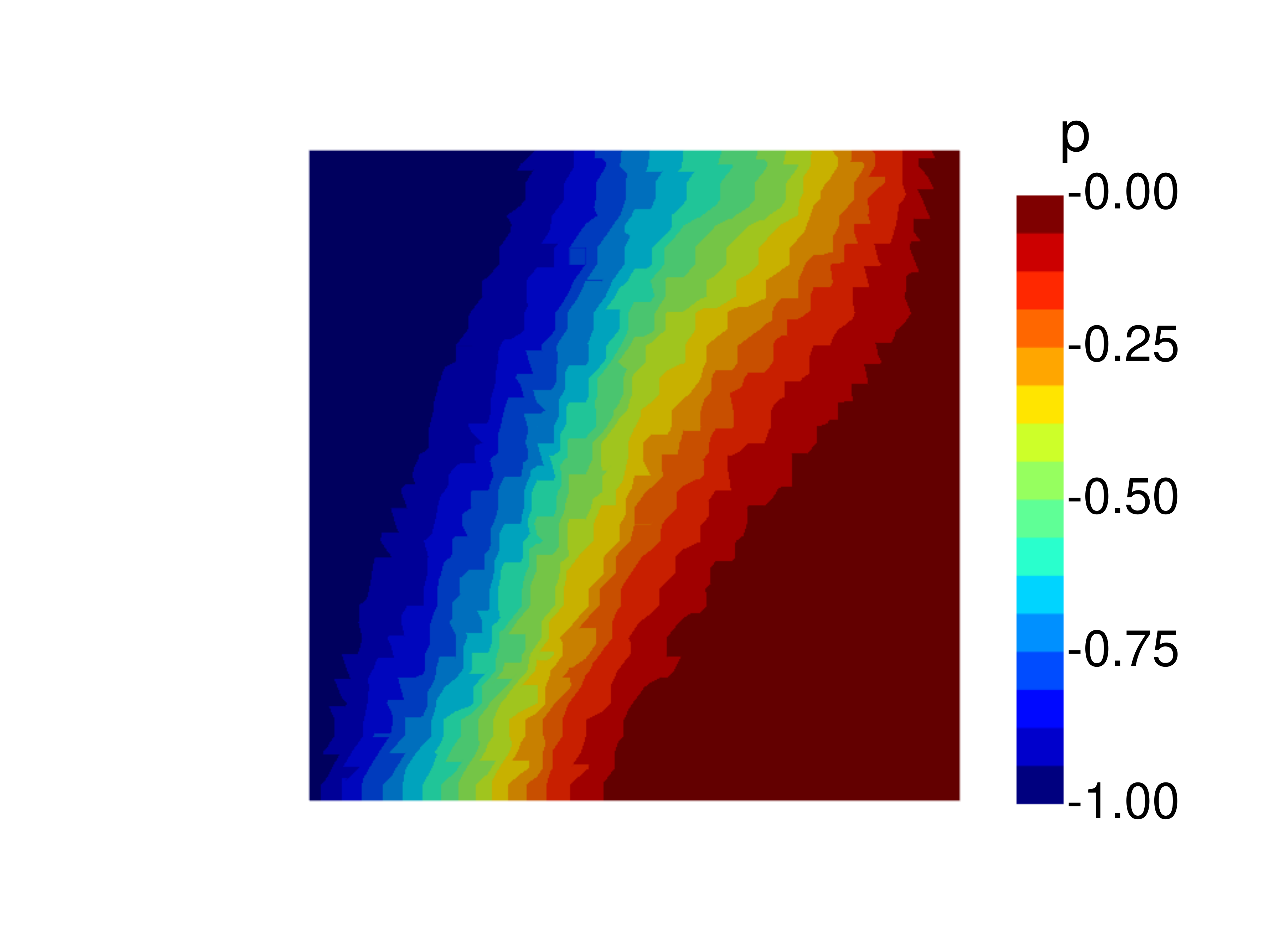} \caption{}\label{fig:mcmc:mlklpde-30:p-l2}
\end{subfigure}
\captionsetup{singlelinecheck=off,font=footnotesize}
\caption[]{Realizations of GRF samples on different levels and corresponding pressure fields obtained via multilevel using SPDE (top), KL with 10 modes (middle) and KL with 50 modes (bottom).}
\label{fig:mcmc:runs}
\end{figure}

Figure \ref{fig:mcmc:iact} shows the autocorrelation of various multilevel sampling configurations with three levels in the hierarchy. For each configuration, the chains for $Q_\ell$ on levels $\ell=1$ and $\ell=2$ all demonstrate similar autocorrelation, attributed to the insignificant differences between the fields on finer levels. The mixing in the chain for $Y_\ell$ improves for finer levels and is similar for both the SPDE and KL-SPDE samplers. The mixing of the KL-SPDE does not monotonically improve with the number of modes used, as the KL-SPDE sampler with 50 modes demonstrates better mixing than the KL-SPDE sampler with 10 or 75 modes. However, compared to the SPDE sampler, the KL-SPDE sampler with 50 modes demonstrates better mixing in the chain for $Q_\ell$ and can be attributed to the reduced dimensionality of the problem on the coarsest level. This suggests that there exists an optimal number of modes (i.e. a coarse subspace) that balances the dimensionality reduction, the accuracy of the representation and efficiency of the sampler. When sampling using the KL-SPDE sampler with $\mathpzc{m_0}$ modes, the sample space dimensionality on the subsequent finer level is $\mathpzc{n_1} - \mathpzc{m_0}$ (i.e. the dimensionality of the complement space). Hence, using fewer modes on the coarse level increases the dimensionality of the subsequent, more expensive fine level. Although it may appear that using more/all of the modes (i.e. $\mathpzc{m_0}=\mathpzc{n_0}$) may decrease the cost of the fine level, incorporating higher frequency modes into the expansion introduces aliasing errors, since frequencies above the Nyquist frequency cannot be resolved \cite{Schmidt2020}. This aliasing error pollutes the sampling on the coarse level which in turn affects the sampling on the finer levels.

\begin{figure}[H] \centering
\begin{subfigure}[h]{0.24\textwidth} %
\includegraphicsifexists[width=\textwidth]{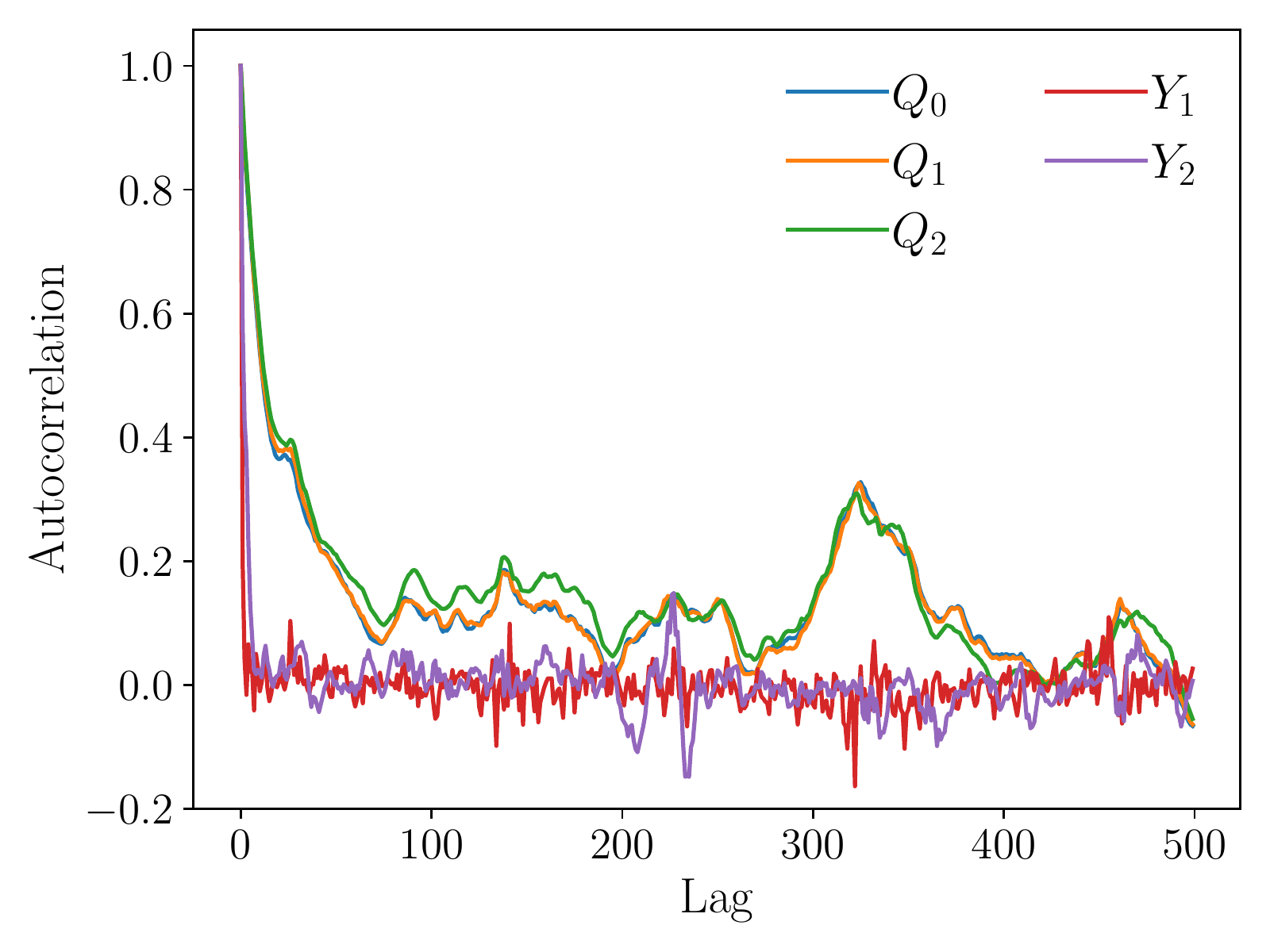} \caption{}\label{fig:mcmc:mlpde:iact}
\end{subfigure}
\begin{subfigure}[h]{0.24\textwidth} %
\includegraphicsifexists[width=\textwidth]{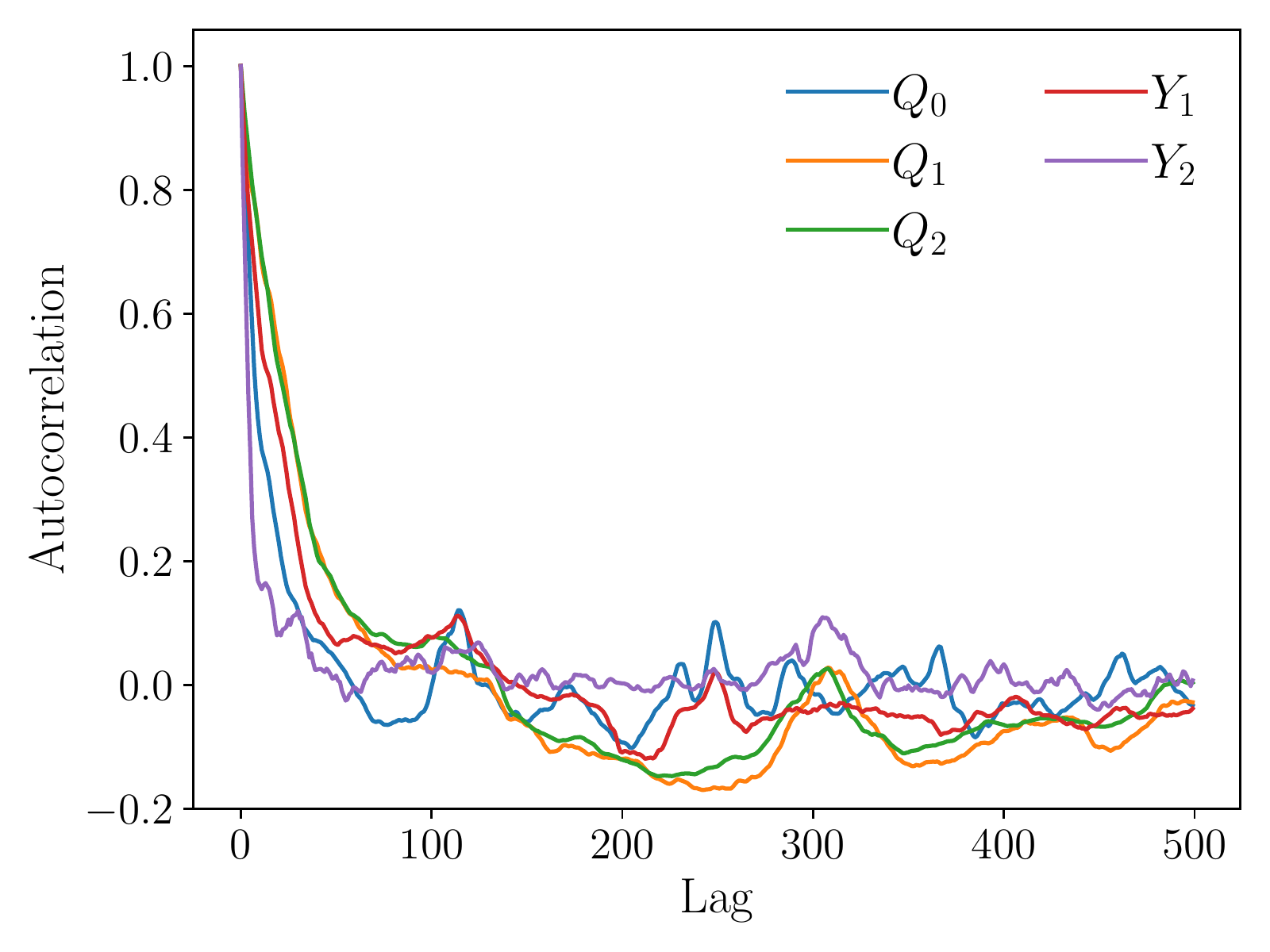} \caption{}\label{fig:mcmc:mlklpde-10:iact}
\end{subfigure}
\begin{subfigure}[h]{0.24\textwidth} %
\includegraphicsifexists[width=\textwidth]{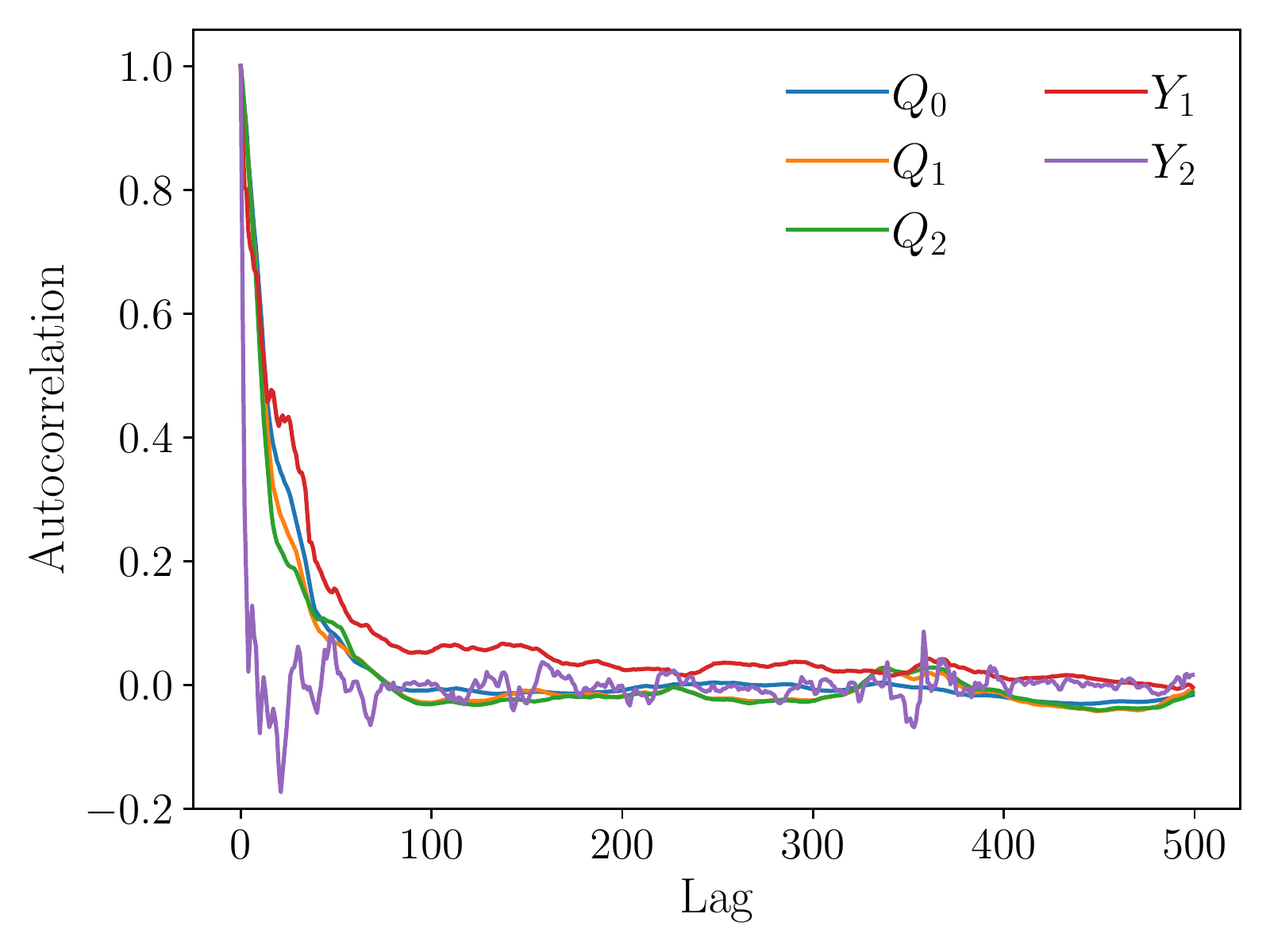} \caption{}\label{fig:mcmc:mlklpde-50:iact}
\end{subfigure}
\begin{subfigure}[h]{0.24\textwidth} %
\includegraphicsifexists[width=\textwidth]{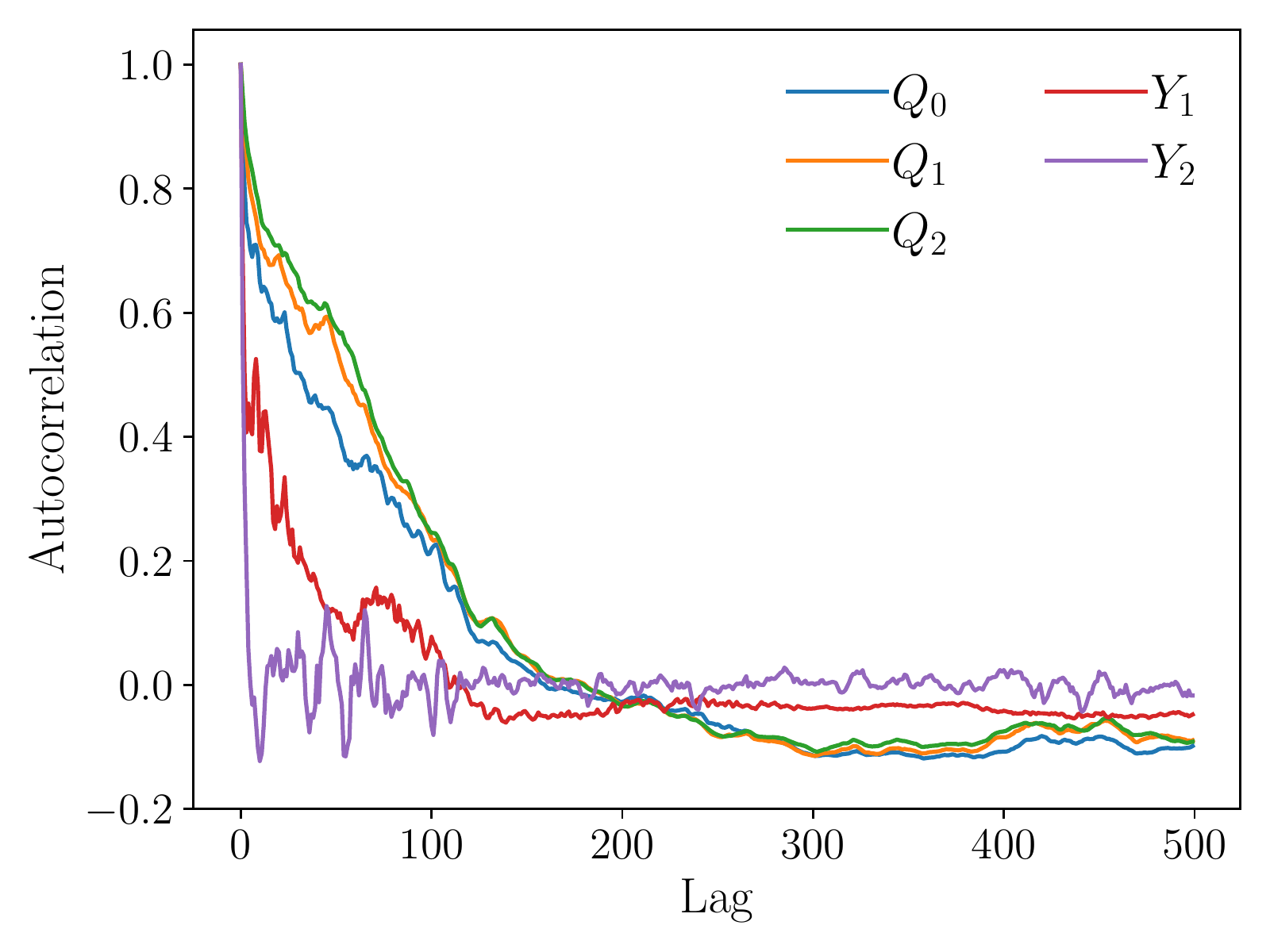} \caption{}\label{fig:mcmc:mlklpde-75:iact}
\end{subfigure}
\captionsetup{singlelinecheck=off,font=footnotesize}
\caption[]{Autocorrelation function of $Y$ and $Q$ on different levels obtained using: a) SPDE sampling, and KL-SPDE sampling with: b) 10 modes, c) 50 modes and d) 75 modes.}
\label{fig:mcmc:iact}
\end{figure}

Table \ref{table:3Levels:IACT} shows the coarse level variance, IACT, effective sample size and total cost to achieve a tolerance of $\varepsilon = 0.1$ for various multilevel sampling configurations with a three level hierarchy. We see that the lowest coarse-level variance, smallest effective sample size and lowest total cost is achieved using 50 KL modes on the coarsest level, with the cost increasing when using a smaller or larger number of KL modes. This again suggests the existence of an optimal number of modes that balances the dimensionality reduction and aliasing errors. Increasing the number of KL modes towards this \emph{optimum} value leads to a rapid decrease (by a factor of $\approx 3$-per 25 KL modes) in the total cost while increasing beyond this value leads to an increase in the total cost (by a factor of $\approx 2$-per 25 KL modes). This indicates that the rate of performance improvement from increasing dimensionality of the coarse-level sample space is greater than the performance degradation caused by aliasing errors.

\begin{table}[H]
\caption{Average (over five chains) effective sample size and total cost to achieve tolerance of $\varepsilon = 0.1$ for various multilevel sampling configurations with three levels in the hierarchy. Here, $\mathrm{KL}(\mathpzc{m})$ refers to the KL-SPDE sampler with $\mathpzc{m}$ KL modes.} \label{table:3Levels:IACT}
\centering
\begin{tabular}{ l | c | c c c | c c c | c } 
\hline 
\hline 
&  & \multicolumn{3}{c|}{IACT} & \multicolumn{3}{c|}{Effective Sample Size} & Total Cost (s) \\
\hline
Configuration  & $\Var[\pi_0]{Q_0}$ & $\ell=0$ & $\ell=1$ & $\ell=2$ &  $\ell=0$ & $\ell=1$ & $\ell=2$  & \\ 
\hline 
SPDE  & 33 & 223 & 21 & 49 &  11651 & 544 & 597 & 33637 \\
KL(10) & 43 & 275 & 81 & 33 & 26431 & 297 & 203 & 186555 \\
KL(25)  & 63 & 271 & 16 & 60 & 21800 & 644 & 363 & 68658 \\
KL(50)  & 19 & 276 & 27 & 54 & 7653 & 254 & 192 & 28800 \\
KL(75)  & 31 & 275 & 28 & 80 & 11774 & 510 & 217 & 42333 \\
KL(100) & 44 & 271 & 42 & 30 & 16967 & 330 & 475 & 65005 \\
\hline 
\hline
\end{tabular}
\end{table}

Figure \ref{fig:mcmc:acceptance} shows the acceptance rate of the samplers on each level of a four level hierarchy. It clearly demonstrates that the acceptance rate monotonically increases with refinement and is aligned with MLMCMC theory \cite{Dodwell2015}. The acceptance rate of the KL-SPDE samplers tends to that of the SPDE sampler with an increasing number of KL modes. It should be mentioned that the acceptance rate of the sampling algorithms is also influenced by several factors including the tuning parameters (e.g., $\beta$ in pCN proposal) in the MCMC sampler, and hence, can be optimized for each sampler and test problem. The mean and variance of $Y_\ell$ on different levels are shown in Figures \ref{fig:mcmc:meanY} and \ref{fig:mcmc:varY}, respectively. The mean of $|Y_\ell|$ and variance of $Y_\ell$ both monotonically decay with refinement, with the KL-SPDE sampler demonstrating both lower values and faster initial decay, with subsequent finer levels displaying similar decay rates as the SPDE sampler. The mean and variance of $Y_\ell$ is dependent on the number of modes used but is lower than that of the SPDE sampler when using a sufficient number of KL modes. The faster decay in the mean of $|Y_\ell|$ yields a faster convergence in the multilevel telescoping sum (\eqref{ML:Expectation}) while a faster decay in the variance of $Y_\ell$ yields a faster convergence rate of the Monte Carlo estimator of $\mathbb{E}_{\pi_\ell,\pi_{\ell-1}}[Y_\ell]$ (\eqref{ML:Y}). Figures \ref{fig:mcmc:iact} and \ref{fig:mcmc:stats}, and Table \ref{table:3Levels:IACT} all show that the KL-SPDE sampler with 50 modes on the coarsest level demonstrate lower variances, cost, effective sample sizes and improved mixing on each level, and strongly indicates the existence of an optimal subspace. This configuration balances the dimensionality reduction and aliasing errors, leading to an efficient sampler for the problem. Of course, the parameters of the samplers (e.g., proposal variance) can be calibrated to further improve the efficiency of the samplers.

\begin{figure}[H] \centering
\begin{subfigure}[h]{0.33\textwidth} %
\includegraphicsifexists[width=\textwidth]{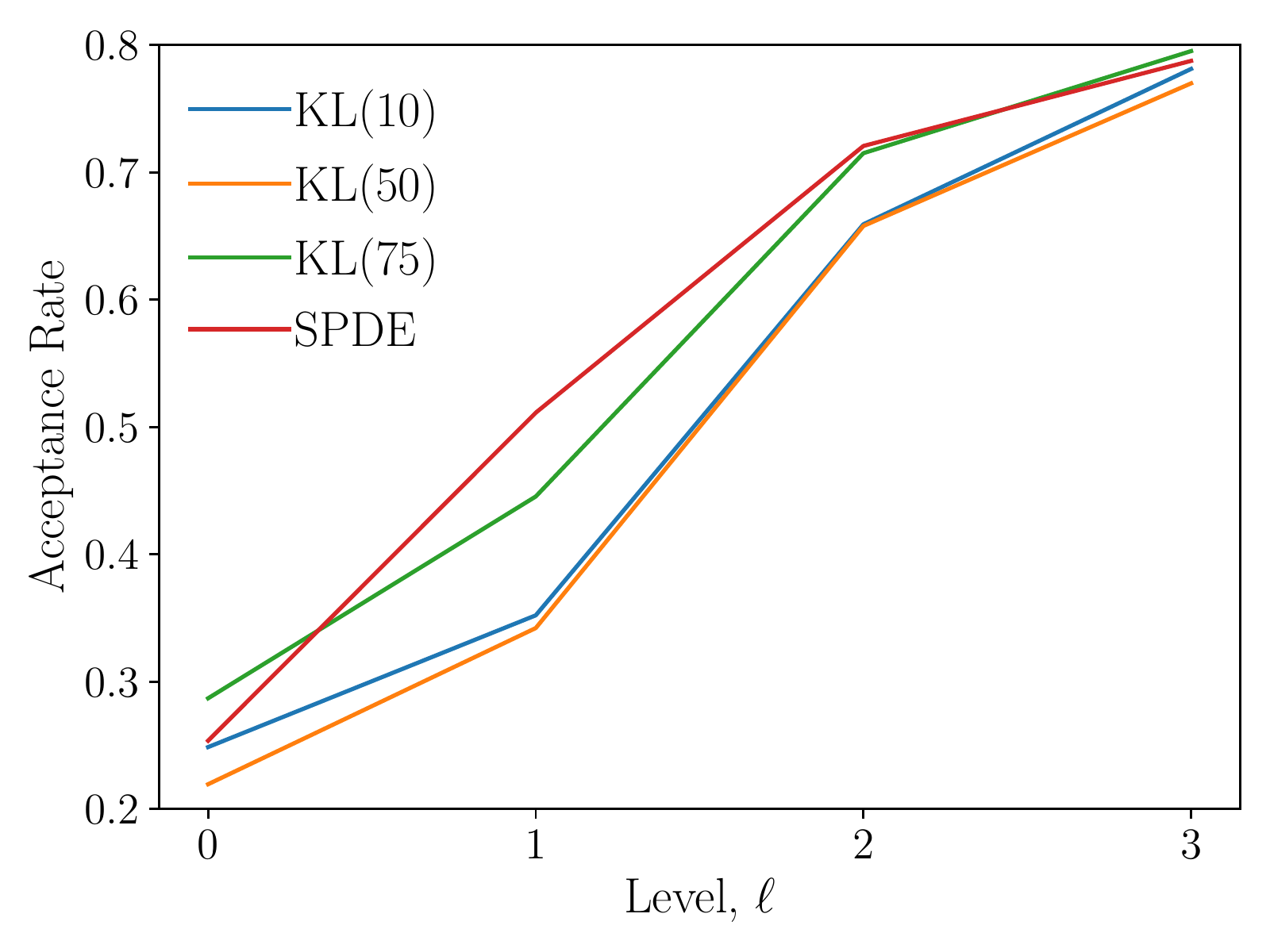} \caption{}\label{fig:mcmc:acceptance}
\end{subfigure}
\begin{subfigure}[h]{0.33\textwidth} %
\includegraphicsifexists[width=\textwidth]{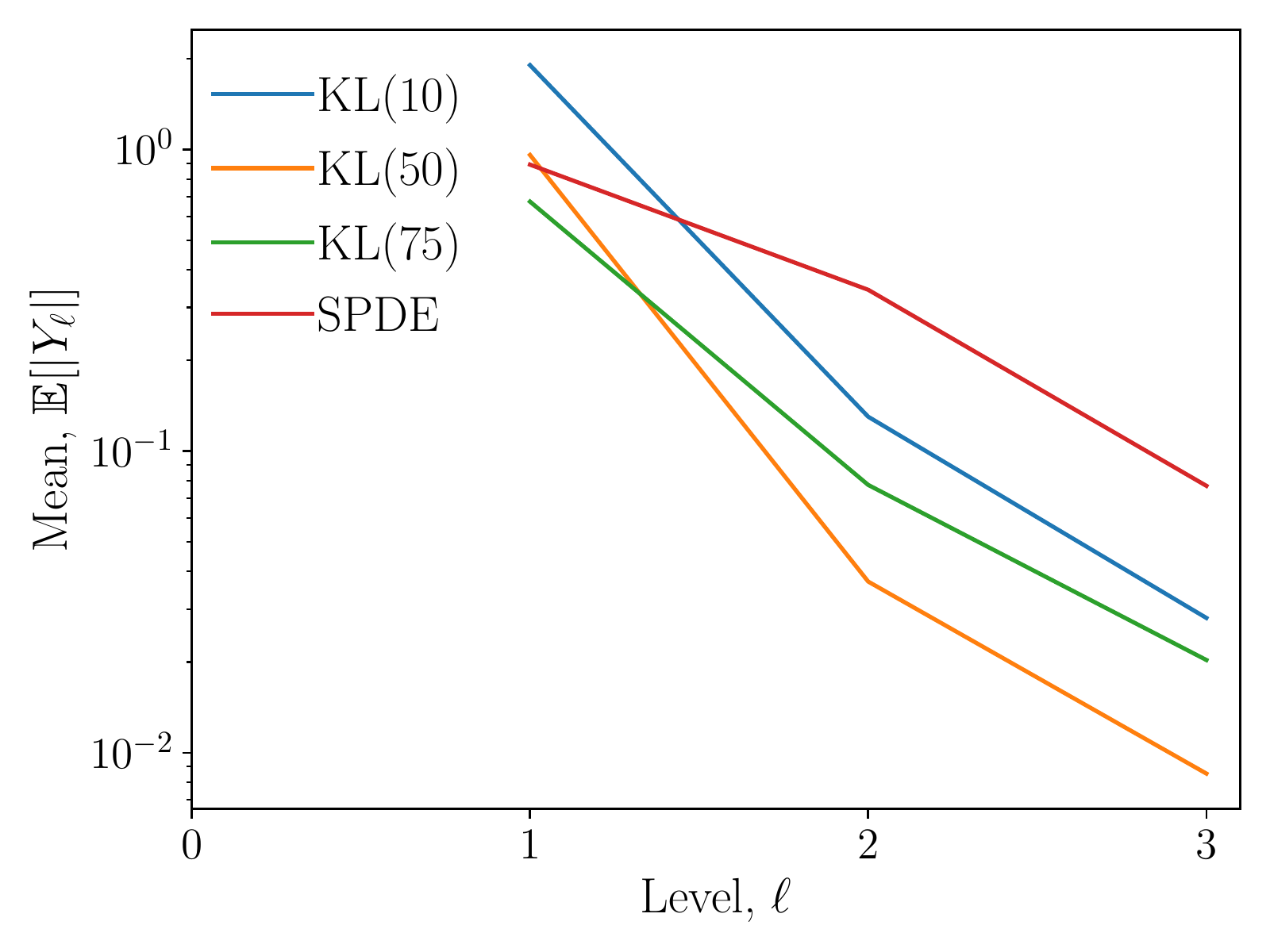} \caption{}\label{fig:mcmc:meanY}
\end{subfigure}
\begin{subfigure}[h]{0.33\textwidth} %
\includegraphicsifexists[width=\textwidth]{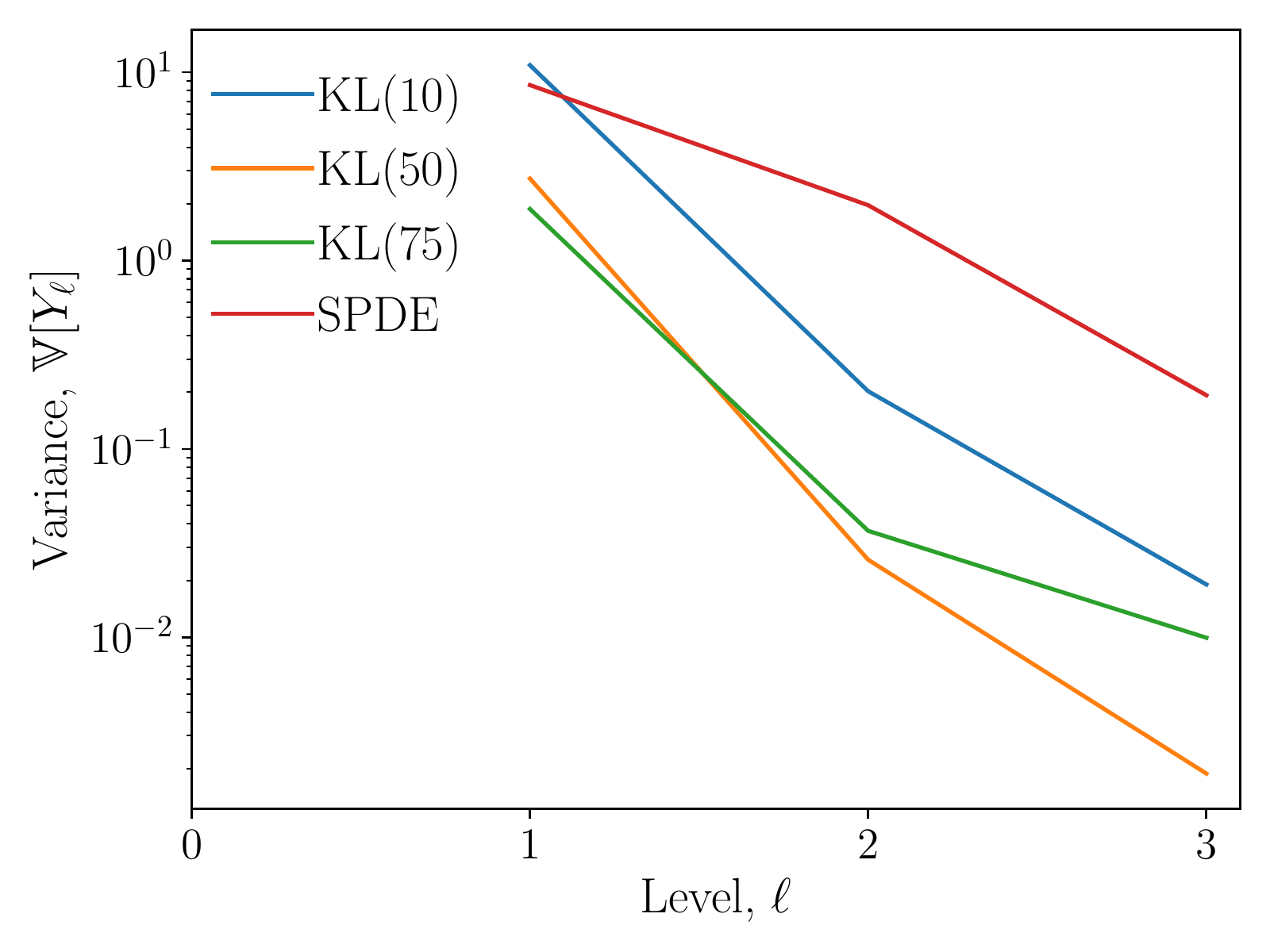} \caption{}\label{fig:mcmc:varY}
\end{subfigure}
\captionsetup{singlelinecheck=off,font=footnotesize}
\caption[]{a) Acceptance rate, b) mean of $|Y_\ell|$, and c) variance of $Y_\ell$ on different levels obtained using different multilevel sampling configurations. Here, $\mathrm{KL}(\mathpzc{m})$ refers to the KL-SPDE sampler with $\mathpzc{m}$ KL modes.}
\label{fig:mcmc:stats}
\end{figure}

\section{Conclusion} \label{sec:conclusion}

In this work, we presented multilevel decompositions for hierarchical Markov Chain Monte Carlo (MCMC) sampling, designed to enhance efficiency while maintaining accuracy in high-dimensional inference problems. Our approach integrates multilevel decomposition techniques based on stochastic partial differential equation (SPDE) with a Karhunen–Loève (KL) expansion to optimize the sampling process. A key feature of this framework is its ability to reduce sample space while preserving discretization accuracy and the ergodicity of the MCMC sampler, ensuring convergence and reliable sampling across all levels of the hierarchy.

We demonstrated the effectiveness of this method through its application on an inference problem in groundwater flow, a challenging problem characterized by high-dimensional and uncertain parameter spaces. Our results revealed that the optimal KL expansion applied at the coarsest level reduces sampling cost, the variance of the estimator, and the mixing of the MCMC chain. This reduction in cost and variance is propagated to finer levels, leading to more efficient and accurate sampling across the hierarchy. Importantly, we also demonstrated, empirically, that there exists an optimal number of KL modes at the coarse level (i.e. a coarse subspace), striking a balance between dimensionality reduction that reduces the sample space size and aliasing errors that pollute the GRF samples. Additionally, the use of KL decomposition maintained a similar rate of decay in the variance across levels while achieving variance on each level, further enhancing efficiency.

Future work will focus on identifying the optimal coarse subspace more rigorously and exploring the use of machine learning-based approximations to further accelerate computations of the forward problem at individual levels of the hierarchy. These advancements have the potential to further enhance the scalability and applicability of the multilevel MCMC framework, enabling more efficient uncertainty quantification in increasingly complex systems.

\section*{Acknowledgements}

This work was performed under the auspices of the U.S. Department of Energy by Lawrence Livermore National Laboratory under Contract DE-AC52-07NA27344. LLNL-JRNL-872668

\bibliographystyle{unsrt}
\bibliography{references}  %

\end{document}